\numberwithin{equation}{section}
\newtheorem{theorem}{Theorem}
\newtheorem{lemma}[theorem]{Lemma}
\newtheorem{lemma-construction}[theorem]{Lemma-Construction}
\newtheorem{proposition}[theorem]{Proposition}
\newtheorem{definition}[theorem]{Definition}
\newtheorem{corollary}[theorem]{Corollary}
\newtheorem{conjecture}[theorem]{Conjecture}
\numberwithin{theorem}{section}
\theoremstyle{definition}
\newtheorem{thm}{Theorem}[section]
\newtheorem{prop}[thm]{Proposition}
\theoremstyle{remark}
\newtheorem{remark}{Remark}[section]
\newtheorem{example}[remark]{Example}
\newcommand\indlim\varinjlim
\newcommand\cA{\mathcal{A}}
\newcommand\cB{\mathcal{B}}
\newcommand\cE{\mathcal{E}}
\newcommand\cF{\mathcal{F}}
\newcommand\cL{\mathcal{L}}
\newcommand\cM{\mathcal{M}}
\newcommand\cO{\mathcal{O}}
\newcommand\cP{\mathcal{P}}
\newcommand\cT{\mathcal{T}}
\newcommand\cV{\mathcal{V}}
\let\fg\undefined
\newcommand{\fc}{{\mathfrak c}}
\newcommand{\fg}{{\mathfrak g}}
\newcommand{\fm}{{\mathfrak m}}
\newcommand{\ft}{{\mathfrak t}}
\newcommand{\fC}{{\mathfrak C}}
\newcommand{\fS}{{\mathfrak S}}
\newcommand{\frakg}{{\mathfrak g}}
\newcommand\bbA{\mathbb{A}}
\newcommand\CC{\mathbb{C}}
\newcommand\DD{\mathbb{D}}
\newcommand\GG{\mathbb{G}}
\newcommand\ZZ{\mathbb{Z}}
\newcommand\bA{{\bbA}}
\newcommand\bC{{\CC}}
\newcommand\bG{{\GG}}
\newcommand\bZ{{\ZZ}}
\newcommand\Z{\mathbb{Z}}
\newcommand\A{\mathbb{A}}
\newcommand\N{\mathbb{N}}
\def\d{\mathrm{d}}
\newcommand\rH{\mathrm{H}}
\newcommand\rT{\mathrm{T}}
\newcommand\gl{\mathfrak{gl}}
\newcommand\GL{\mathrm{GL}}
\newcommand\SL{\mathrm{SL}}
\newcommand\id{\mathrm{id}}
\newcommand\Spec{\mathrm{Spec}}
\newcommand\Gm{\mathbb{G}_m}
\newcommand\Hom{\mathrm{Hom}}
\newcommand\Ext{\mathrm{Ext}}
\newcommand\End{\mathrm{End}}
\newcommand\Pic{\mathrm{Pic}}
\newcommand\Sym{\mathrm{S}}
\newcommand\Lie{\mathrm{Lie}}
\newcommand\ad{\mathrm{ad}}
\newcommand\rss{\mathrm{rss}}
\newcommand\Supp{\mathrm{Supp}}
\newcommand\Chow{\mathrm{Chow}}
\newcommand\Hilb{\mathrm{Hilb}}
\newcommand\HC{\mathrm{HC}}
\newcommand\codim{\mathrm{codim}}
\newcommand\length{\mathrm{length}}
\newcommand{\rW}{{\mathrm W}}
\newcommand{\mO}{\mathcal{O}}
\newcommand{\sB}{\mathscr{B}}
\newcommand{\sP}{\mathscr{P}}
\newcommand{\on}{\operatorname}
\newcommand{\is}{\simeq}
\newcommand{\tC}{\widetilde C}
\newcommand{\quash}[1]{}  
\newcommand{\nc}{\newcommand}
\newcommand\sd{\on{sd}}
\nc{\al}{{\alpha}} \nc{\be}{{\beta}} \nc{\ga}{{\gamma}}
\nc{\ve}{{\varepsilon}} 
\nc{\La}{{\Lambda}}
\newcommand{\beqn}{\begin{equation*}}
\newcommand{\eeqn}{\end{equation*}}
\newcommand{\beq}{\begin{equation}}
\newcommand{\eeq}{\end{equation}}
\newcommand{\pol}{\on{pol}}
\title{On the Hitchin morphism for higher dimensional varieties}
\author{T.H. Chen, B.C. Ngô}
\date{}
\begin{document}
\maketitle

\begin{abstract}
In this paper, we explore the structure of the Hitchin morphism for higher dimensional varieties. We show that the Hitchin morphism factors through a closed subscheme of the Hitchin base, which is in general a non-linear subspace of lower dimension.
We conjecture that the resulting morphism, which we call the spectral data morphism, is surjective. 
In the course of the proof, we establish connections between the Hitchin morphism for higher dimensional varieties, the invariant theory of the commuting schemes, and Weyl's polarization theorem. 
We use the factorization of the Hitchin morphism to construct the spectral and cameral covers.
In the case of general linear groups and algebraic surfaces, we show that spectral surfaces admit canonical finite Cohen-Macaulayfications, which we call the Cohen-Macaulay spectral surfaces, and we use them to obtain a description of the generic fibers of the Hitchin morphism similar to the case of curves.
Finally, we study the Hitchin morphism for some classes of algebraic surfaces.
\end{abstract}

\section{Introduction}
For a smooth projective curve $X$ over a field $k$, and a split reductive group $G$ over $k$ of rank $n$, a $G$-Higgs bundle over $X$ is a pair $(E,\theta)$ consisting of a principal $G$-bundle $E$ over $X$ and an element 
$\theta\in \rH^0(X,\ad(E)\otimes_{\mO_X}\Omega^1_{X})$ called a Higgs field, 
where $\ad(E)$ is the adjoint vector bundle associated with $E$ and $\Omega^1_{X}$ is the sheaf of $1$-forms of $X$. In \cite{Hitchin}, Hitchin constructed a completely integrable system on the moduli space $\cM_X$ of $G$-Higgs bundles over a curve $X$. This system can be presented as a morphism 
$h_X:\cM_X\to\cA_X$ 
where $\cM_X$ is the moduli space of Higgs bundles and $\cA_X$ is the affine space 
\begin{equation} \label{A_X}
	\cA_X=\bigoplus_{i=1}^n \rH^0(X,\Sym^{e_i}\Omega^1_{X})
\end{equation}
where $\Sym^{e_i}\Omega^1_{X}$ is the $e_i$-th symmetric power of $\Omega^1_{X}$. The morphism $h_X$ is known as the Hitchin fibration. For curves $X$ of genus $g_X>1$, $h_X$ is surjective and its generic fiber is isomorphic to a disjoint union of abelian varieties if we discard automorphisms. This work aims at addressing these basic properties of the Hitchin morphism for higher dimensional algebraic varieties. 

Over a higher dimensional algebraic variety $X$, a $G$-Higgs bundle is a $G$-bundle $E$ over $X$ equipped with a Higgs field
\begin{equation} \label{G-Higgs-field}
\theta\in \rH^0(X,\ad(E) \otimes_{\mO_X} \Omega^1_{X})
\end{equation}
where $\ad(E)$ is the adjoint vector bundle of $E$,
satisfying the integrability condition $\theta\wedge \theta=0$. 
With given local coordinates $z_1,\ldots, z_d$ in a neighborhood $U$ of $x\in X$ and given local trivialization of $E$, we can write $\theta=\sum_{i=1}^d \theta_i \d z_i$
where $\theta_i : U\to \fg$ are functions on $U$ with values in the Lie algebra $\fg$ of $G$. The integrability condition satisfied by the Higgs field is 
$$[\theta_i,\theta_j]=0$$ 
for all $1\leq i,j \leq d$. Hitchin's construction, generalized to higher dimensional varieties by Simpson
\cite{Simpson 2}, provides a morphism $h_X:\cM_X\to \cA_X$ where $\cA_X$ is the affine space \eqref{A_X}. 

For general higher dimensional algebraic varieties, the Hitchin morphism  is very far from being surjective. We note that $h_X(E,\theta)$ could be defined for any $\theta\in \rH^0(X,\ad(E) \otimes_{\mO_X} \Omega^1_{X})$ no matter if it satisfies the integrability condition $\theta\wedge \theta =0$ or not. We aim at understanding the equations on $\cA_X$ implied by the integrability condition $\theta\wedge \theta=0$.

Our study of the Hitchin morphism for higher dimensional varieties follows the method of \cite{Ngo} in the one dimensional case. Namely, instead of studying the Hitchin morphism for a given variety $X$, we study certain universal morphisms independent of $X$. Those morphisms have to do with the construction of $G$-invariant functions on the scheme $\fC^d_G$ of commuting elements $x_1,\ldots,x_d$ in the Lie algebra $\fg=\Lie(G)$. The reductive group $G$ acts diagonally on $\fC^d_G$ by the adjoint action on $x_1,\ldots,x_d$. 

Our study of $G$-invariant functions on $\fC^d_G$ can roughly be divided into two parts. First, we investigate the generalization of the Chevalley restriction theorem to the commuting scheme. Second, we investigate the subring of $G$-invariant functions on $\fC^d_G$ derived from Weyl's polarization method. Both of these investigations are hindered by notoriously difficult problems in commutative algebra, for instance the question whether the categorical quotient $\fC^d_G\sslash G$ is reduced. We are able to prove the reducedness of $\fC^d_G\sslash G$ in the case $G=\GL_n$ generalizing a theorem of Gan-Ginzburg in the case $d=2$ \cite{GG}. Although we can't prove the reducedness of $\fC^d_G\sslash G$ for general reductive groups, we can work around it and address the problem of description of the image of the Hitchin morphism. Moreover, we will state and hierarchize certain problems which are related to 
the reducedness of $\fC^d_G\sslash G$, which seem to be worthy of further investigation.

Here is a summary of our results. For a higher dimensional proper smooth algebraic variety $X$, the Hitchin morphism $h_X:\cM_X\to \cA_X$ where $\cM_X$ is the moduli stack of Higgs bundles on $X$ and $\cA_X$ is the affine space defined by the formula \eqref{A_X}, is not surjective in general. We will define a closed subscheme $\cB_X$ of $\cA_X$, which is in general a non linear subspace of much lower dimension and prove that $h_X$ factors through $\cB_X$ (or rather, a thickening of $\cB_X$, see Section \ref{postulated image}). 
We conjecture that the resulting morphism $\cM_X\to \cB_X$, which we call the spectral data morphism, is surjective. 
In the course of the proof, we establish the connections between the
Hitchin morphisms for higher dimensional varieties, the invariant theory 
of the commuting schemes, and Weyl's polarization theorem in classical invariant theory.

We use the factorization of the Hitchin morphism to construct spectral and cameral covers and establish basic properties of them. In particular, we will see that, unlike the case of curves, the spectral and cameral covers are generally not flat in higher dimension.
In the case $G=\GL_n$ and $\dim(X)=2$, we construct an open subset $\cB^\heartsuit_X$ of $\cB_X$ such that for every $b\in \cB^\heartsuit_X$, 
the corresponding spectral surface admits a canonical 
finite Cohen-Macaulayfication, called the Cohen-Macaulay spectral surface,
and we use it to 
obtain a description of the Hitchin fiber $h_X^{-1}(b)$ similar to the case of curves. 
In particular, 
we show that $h_X^{-1}(b)$
is non empty for $b\in \cB^\heartsuit_X$
and there is a natural action of 
the
Picard stack $\mathscr P_b$ of line bundles on 
the Cohen-Macaulay spectral surface on $h_X^{-1}(b)$. 
We also construct an open subset $\cB_X^\diamondsuit$ of $\cB_X^\heartsuit$ such that for all $b\in \cB_X^\diamondsuit$ the fiber $h_X^{-1}(b)$ is isomorphic to a disjoint union of abelian varieties after we discard automorphisms. For some class of algebraic surfaces (including elliptic surfaces), we can prove that $\cB_X^\diamondsuit$ is an open dense subset of $\cB_X^\heartsuit$, which is an open dense subset of $\cB_X$. 

Throughout this paper, we fix an algebraically closed field $k$ of characteristic zero. To remove or weaken the assumption on the characteristic of $k$, we would have to refine many deep results in invariant theory. We will come back to deal with this task in a future work.

\section{Characteristics of Higgs bundles over curves}

Hitchin's construction was revisited in \cite{Ngo} from the point of view of the theory of algebraic stacks. In loc. cit. the Hitchin morphism $h_X:\cM_X\to\cA_X$ was derived from a natural morphism of algebraic stacks 
\begin{equation}
	h: [\fg/G] \to \fg\sslash G
\end{equation} 
where $[\fg/G]$ and $\fg\sslash G$ are the quotients of the Lie algebra of $\fg$ by the adjoint action of $G$ in the framework of algebraic stacks and
geometric invariant theory respectively. We recall that for every test scheme $S$, the groupoid of $S$-points of $[\frakg/G]$ consist of all pairs $(E,\theta)$ where $E$ is a principal $G$-bundles over $S$ and $\theta\in\rH^0(S,\ad(E))$ is a global section of the adjoint vector bundle $\ad(E)$ obtained from $E$ by pushing out by the adjoint representation $\ad:G\to\GL(\fg)$ of $G$. The categorical quotient $\fg\sslash G$ is the affine scheme $ \fg\sslash G=\Spec(k[\fg]^G)$ where $k[\fg]^G$ is the ring of $G$-invariant functions on $\fg$. The concept of categorical quotient $\fg\sslash G$ was devised by Mumford in \cite{Mumford} by which he means the initial object in the category of pairs $(q,Q)$ where $Q$ is a $k$-scheme and $q:\fg\to Q$ is a $G$-invariant morphism. 

We will also use the Chevalley restriction theorem.  Let us denote by $\ft$ a Cartan algebra, and $W$ its Weyl group. Since $W$-conjugate elements in $\ft$ are $G$-conjugate as elements of $\fg$, the restriction of a $G$-invariant function on $\fg$ to $\ft$ is $W$-invariant and therefore defines a homomorphism of algebras $k[\fg]^G\to k[\ft]^W$. The Chevalley restriction theorem asserts that this map is an isomorphism. This is equivalent to stating that
the morphism between the categorical quotients
\begin{equation}\label{Chevalley}
	\ft\sslash W \to \fg\sslash G,
\end{equation}
is an isomorphism.

Let us denote $\fc=\ft\sslash W$. Since $W$ acts on $\ft$ as a reflection group, after another theorem of Chevalley, $\fc$ is also isomorphic to an affine space. The scalar action of $\Gm$ on $\ft$ induces an action of $\Gm$ on $\fc$. In fact, we can choose coordinates $c_1,\ldots,c_n$ of the affine space $\fc$ that are homogenous as polynomial functions of $\ft$, that is,
\begin{equation}\label{Gm-action}
	t(c_1,\ldots,c_n)=(t^{e_1} c_1,\ldots,t^{e_n}c_n).
\end{equation}
The integers $e_1,\ldots,e_n$ are independent of the choice of $c_1,\ldots,c_n$.

Before proceeding further with the construction of the Hitchin morphism for curves, and as preparation for the higher dimensional case, let us state an elementary yet useful fact. Let $V$ be a finite dimensional $k$-vector space. 
The space of morphisms $f:V\to \bA^1$ satisfying $f(tv)=t^e f(v)$ can be canonically identified with the $e$-th symmetric power $\Sym^e V^*$ of the dual vector space $V^*$. This is equivalent to saying that the scalar action of $\Gm$ on $V$ gives rise to the graduation of the algebra of polynomial functions on $V$, i.e., $\Sym (V^*)=\bigoplus_{e\in\Z_{\geq 0}} \Sym^e V^*$. Although it may seem completely obvious, this is a useful fact that shouldn't be overlooked. For instance, for $e=1$, this says that any $\Gm$-equivariant polynomial map $f:V\to \A^1$, i.e., a polynomial map satisfying $f(tv)=t f(v)$, is automatically linear. For $d=2$, all polynomial maps $f:V\to \A^1$ satisfying $f(tv)=t^2f(v)$ is automatically quadratic and so on.  

A Higgs field $\theta\in\rH^0(X,\ad(E)\otimes_{\mO_X} \Omega^1_X)$ can be seen as an $\cO_X$-linear map $\cT_X\to \ad(E)$, where $\cT_X$ is the $\cO_X$-module of local sections of the tangent bundle $T_X$ of $X$, satisfying the integrability condition \eqref{integrability}. As the integrability condition is void when $X$ is a smooth algebraic curve, it will be ignored in this section. We note that a $\cO_X$-linear map $\cT_X\to \ad(E)$ is the same as a $\Gm$-equivariant morphism $\theta:T_X\to [\fg/G]$ lying over the map $X\to \mathbb BG$ corresponding to the $G$-bundle $E$. Here $\mathbb BG$ is the classifying stack of $G$.
By composing with the morphism $[\fg/G]\to \fg\sslash G$ and the inverse of the isomorphism $\fc\to \fg\sslash G$, we get a $\Gm$-equivariant morphism $a:T_X\to \fc$. For $i=1,\ldots,n$, by composing with the functions $c_i:\fc\to \A^1$, we obtain $\Gm$-equivariant morphisms $a_i:T_X\to \bA^1_{e_i}$ where $\bA^1_{e_i}$ is a copy of the affine line on which  $\Gm$ acts by the formula $t.x=t^{e_i}x$. 
We note that the space of all $\Gm$-equivariant functions $a_i:T_X\to \bA^1_{e_i}$ is the affine space of global section of the $e_i$-th symmetric power of the cotangent bundle $T_X^*$ of $X$. Finally, we obtain the Hitchin morphism $h_X:\cM_X\to \cA_X$ where $\cA_X$ is the affine space \eqref{A_X}.

The main result of \cite{Hitchin} asserts that, under the assumption $g_X \geq 2$, the generic fiber is isomorphic to a union of abelian varieties if we ignore isotropy groups. For instance, in the case $G=\GL_n$, Hitchin defines for every $a \in \cA_X$ a spectral curve $X^\bullet_a$. As $a$ varies, the spectral curves $X^\bullet_a$ form a linear system on the cotangent bundles of $X$. The assumption on the genus $g_X\geq 2$ implies that the linear system is ample and its generic member is a smooth projective curve. If $X^\bullet_a$ is smooth, the Hitchin fiber $\cM_a=h_X^{-1}(a)$ is isomorphic to the Picard stack $\Pic(X^\bullet_a)$ which is isomorphic to a disjoint union of abelian varieties if we ignore automorphisms. For classical groups, Hitchin also constructs certain spectral curves using their standard representations. For a general reductive group, Donagi constructs a cameral cover $\tilde X_a$ of $X$ for every $a\in \cA_X$ and proves that the Hitchin fiber $\cM_a$ is isomorphic to a union of abelian varieties if the cameral cover $\tilde X_a$ is a smooth curve. 

Since we will attempt to generalize the construction of cameral curves for Higgs bundles over higher dimensional varieties, let us recall their construction in the case of curves. The construction, due to Donagi \cite{Donagi}, derives the cameral covering $\pi_a:\tilde X_a\to X$ from the Cartesian diagram
\begin{equation}
\begin{tikzcd}
\tilde X_a \arrow{r}{\tilde a} \arrow{d}[swap]{\pi_a}
& {[\ft/\Gm]} \arrow{d}{} \\
X \arrow{r}[swap]{a}
& {[\fc/\Gm]}
\end{tikzcd}
\end{equation}
where the morphism $a:X\to [\fc/\Gm]$ at the bottom line comes from the $\Gm$-equivariant morphism $a:T_X\to \fc$. Since the morphism $\pi:\ft\to \fc$ is finite and flat, $\pi_a$ also has these properties. Away from the discriminant locus $\on{discr}_G\subset\fc$, the morphism $\pi:\ft\to\fc$ is finite étale and Galois with Galois group $W$.  In \cite{Ngo}, we denote by $\cA_X^\heartsuit$ the open subset of $\cA_X$ consisting of maps $a:X\to [\fc/\Gm]$ whose image is not contained in $[\on{discr}_G/\bG_m]$. By construction, for $a\in\cA_X^\heartsuit$, $\tilde X_a\to X$ is generically a finite étale Galois morphism with Galois group $W$. The fibers $\cM_a$ are much better understood under the assumption $a\in \cA_X^\heartsuit$. In particular, there is a natural Picard groupoid $\cP_a$, constructed in \cite{Ngo}, acting on $\cM_a$ with a dense open orbit.

\section{The Higgs stack and the universal spectral data morphism}

Let $X$ be a proper smooth variety of dimension $d$ over $k$. A $G$-Higgs bundle over $X$ is a $G$-bundle $E$ over $X$ equipped with a $\cO_X$-linear map $\theta:\cT_X \to \ad(E)$ from the tangent sheaf $\cT_X$ of $X$ to the adjoint vector bundle $\ad(E)$ of $E$ satisfying the integrability condition: for all local sections $v_1,v_2$ of $\cT_X$ we have 
\begin{equation}\label{integrability}
	[\theta(v_1),\theta(v_2)]=0.
\end{equation}

Let $\frak C_G^d\subset\fg^d$ be the commuting scheme.
It is defined as the scheme-theoretic
zero fiber of the commutator map
\[\fg^d\to\prod_{i<j} \fg,\ \ \  (\theta_1,...,\theta_d)\to\prod_{i<j} [\theta_i,\theta_j].\] 
The   
$k$-points of $\frak C_G^d$ consist of $(\theta_1,...,\theta_d)\in\fg^d(k)$ such that 
$[\theta_i,\theta_j]=0$ for $1\leq i,j\leq d$.
We note that the commuting relations are automatically satisfied in the case $d=1$. Let $V_d$ denote the dual vector space of 
$k^d$ equipped with the standard basis $v_1,\ldots,v_d$. We will identify $\fg^d$ with the space of all linear maps $\theta:V_d\to \fg$ by attaching to $(\theta_1,\ldots,\theta_d)\in \fg^d$ the unique linear map $\theta:V_d\to \fg$ satisfying $\theta(v_i)=\theta_i$. The commuting scheme $\fC^d_G$ can then be identified with the closed subscheme of $\fg^d$ consisting of all $k$-linear maps $\theta:V_d\to \fg$ such that $[\theta(v),\theta(v')]=0$ for all $v,v'\in V_d$. 

Granted with this description of $\fC^d_G$, we have an action of $\GL_d\times G$ on $\fC^d_G$ coming from the natural action of $\GL_d$ on $V_d$ and the adjoint action of $G$ on $\fg$. We will call the quotient 
\begin{equation}
	[\fC^d_G/(\GL_d\times G)]
\end{equation}
in the sense of algebraic stack, the 
Higgs stack. It attaches to every test scheme $S$ the groupoid of triples $(\cV,\cE,\theta)$ consisting of a vector bundle $\cV$ of rank $d$ over $S$, a principal $G$-bundle $\cE$ over $S$, and a $\cO_S$-linear map $\theta:\cV\to \ad(\cE)$ satisfying $[\theta(v),\theta(v')]=0$ for all local sections $v,v'$ of $\cV$. A Higgs field on a $d$-dimensional proper smooth variety $X$ can be represented by a map
\begin{equation}\label{theta-local}
	\theta: X\to [\fC^d_G/(\GL_d\times G)]
\end{equation}
lying over the map $X\to \mathbb{B} \GL_d$ representing to the cotangent bundle $T_X^*$. Here, we denote by $\mathbb{B} \GL_d$ the classifying stack of $\GL_d$.

The construction of the Hitchin morphism derives from $G$-invariant functions on $\fC^d_G$. Studying $G$-invariant functions on $\fC^d_G$ amounts to investigate the morphism
\begin{equation} \label{characteristic}
	[\fC^d_G/ G] \to \fC^d_G\sslash G
\end{equation}
between quotients of the commuting scheme $\fC^d_G$ by the diagonal action of $G$
in the sense of algebraic stacks and geometric invariant theory respectively.
By definition, the categorical quotient $\fC^d_G\sslash G$ is the affine scheme whose ring of functions is the $k$-algebra
$$k[\fC^d_G\sslash G]=k[\fC^d_G]^G$$
of $G$-invariant functions on $\fC^d_G$.

The commuting scheme $\fC^d_G$ has been studied intensively, especially in the case $d=2$. It has a non-empty open locus $\fC^{d,\rss}_G$ consisting of commuting linear maps $\theta:V_d\to \fg$ such that the image $\theta(V_d)$ has non-empty intersection with the regular semi-simple locus $\fg^{\rss}$ of $\fg$. This open locus is smooth. In the case $d=2$, Richardson \cite{Richardson} proved that the underlying topological space of $\fC^2_G$ is irreducible, in particular, the locus $\fC^{2,\rss}_G$ is dense in $\fC^2_G$. Results of Iarrobino \cite{L} on punctual Hilbert schemes on $\A^d$, with $d\geq 3$, imply that irreducibility is no longer true for $d\geq 3$.

There is a long-standing conjecture saying that the commuting scheme $\fC^2_G$ is reduced. The generalization of this conjecture to the cases $d\geq 3$ seems to be rather doubtful since 
we have
very little understanding of other components of $\fC^{d}_G$ other than the component containing $\fC^{d,\rss}_G$.

The categorical quotient $\fC^d_G\sslash G$ behaves better. In \cite{Hunziker}, Hunziker proved a weak version of the Chevalley restriction theorem for the commuting scheme. If $\ft$ is a Cartan subalgebra of $\fg$, the embedding $\ft^d\to \fg^d$ factors through $\fC^d_G$ since $\ft$ is commutative. Since orbits of the diagonal actions of $W$ on $\ft^d$ are contained in orbits of the diagonal action of $G$ of $\fC^d_G$, the restriction of a $G$-invariant function on $\fC^d_G$ to $\ft^d$ is $W$-invariant. In other words, we have a morphism
\begin{equation}\label{Chevalley-Cd}
	\ft^d\sslash W \to \fC^d_G \sslash G.
\end{equation}  
Based on fundamental result of Richardson \cite{Richardson}, Hunziker proved that this morphism is a universal homeomorphism, i.e., it is a finite morphism inducing a bijection on  $k$-points, 
see
\cite[Theorem 6.2, Theorem 6.3]{Hunziker}\footnote{In \cite[Section 6]{Hunziker},
Hunziker works with the reduced quotient $R^{\on{red}}$ of the ring $R$ of functions on $\fC^d_G$. As we are over $k=\bC$, the Reynolds operator implies that there exists an isomorphism between $(R^G)^{\on{red}}$ and $(R^{\on{red}})^G$ for any $k$-algebra of finite type with $G$-action (see, e.g., \cite[page 29]{Mumford}).
Thus Hunziker proves that $\ft^d\sslash W \to (\fC^d_G\sslash G)^{\on{red}}$ is a universal homeomorphism. This is equivalent to saying that $\ft^d\sslash W \to \fC^d_G\sslash G$ is a universal homeomorphism.}. 
In particular, $\ft^d\sslash W$ is the normalization of 
the underlying reduced subscheme $(\fC^d_G \sslash G)^{\on{red}}$. 
Since $\ft^d\sslash W$ is irreducible, the categorical quotient $\fC^d_G \sslash G$ is also irreducible.

\begin{conjecture} \label{reduced-normal-GIT}
 The morphism \eqref{Chevalley-Cd} is an isomorphism.
\end{conjecture}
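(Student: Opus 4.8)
The plan is to reduce the conjecture to two separate algebraic properties of the invariant ring $A:=k[\fC^d_G]^G$, and to isolate reducedness as the essential difficulty. Write $B:=k[\ft^d]^W$, so that \eqref{Chevalley-Cd} corresponds to the restriction homomorphism $\phi:A\to B$. By Hunziker's theorem \cite{Hunziker}, $\phi$ is finite and $B$ is the normalization of the domain $A^{\on{red}}=(\fC^d_G\sslash G)^{\on{red}}$; in particular $A^{\on{red}}\hookrightarrow B$ is injective, so $\ker(\phi)$ is exactly the nilradical of $A$. Consequently $\phi$ is an isomorphism if and only if (i) $A$ is reduced and (ii) $\phi$ is surjective. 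I would treat these two conditions independently.

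For the surjectivity (ii), the key observation is that $\fC^d_G$ is closed in $\fg^d$, so that restriction $k[\fg^d]\to k[\fC^d_G]$ is surjective; since $G$ is reductive and $\mathrm{char}(k)=0$, the Reynolds operator preserves surjectivity on $G$-invariants, giving $k[\fg^d]^G\twoheadrightarrow A$. Hence (ii) is equivalent to surjectivity of the full restriction map $k[\fg^d]^G\to B$. This is a question about polarizations in classical invariant theory: the image always contains the polarizations of the basic Chevalley invariants $c_1,\dots,c_n$, and the issue is whether these, together with the remaining $G$-invariants of $d$-tuples, exhaust $B$. For $G=\GL_n$ one obtains surjectivity from Procesi's first fundamental theorem: the invariant ring $k[\gl_n^d]^{\GL_n}$ is generated by traces of words in the $d$ matrices, and these restrict on the diagonal torus to the polarized power sums, which generate the ring of multisymmetric functions in characteristic zero. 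For a general reductive $G$ the analogous surjectivity is controlled by Weyl's polarization theorem for the reflection group $W$, and care is needed because polarizations of the basic invariants alone need not generate $k[\ft^d]^W$ for every reflection group; establishing (ii) in general would require a more refined analysis of the image of $k[\fg^d]^G$.

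The main obstacle, and the reason the statement is offered only as a conjecture, is the reducedness (i). If the commuting scheme $\fC^d_G$ were itself reduced, then $A$, being a subring of the reduced ring $k[\fC^d_G]$, would be reduced and (i) would be immediate; but the reducedness of $\fC^d_G$ is a famous open problem already for $d=2$, and is expected to fail for $d\ge 3$ in view of the pathologies of punctual Hilbert schemes. One therefore cannot take this route in general and must instead prove reducedness of the quotient $\fC^d_G\sslash G$ directly, without resolving the reducedness of the ambient commuting scheme. This is precisely what can be carried out for $G=\GL_n$, generalizing the $d=2$ argument of Gan--Ginzburg \cite{GG}; the obstruction to doing so for an arbitrary reductive group is our very limited understanding of the irreducible components of $\fC^d_G$ other than the one containing the regular semisimple locus $\fC^{d,\rss}_G$. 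I expect this last point to be the crux of any complete proof.
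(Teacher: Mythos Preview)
This statement is a conjecture in the paper, not a theorem; the paper offers no proof of it in general, and your proposal is, correctly, not a proof either but a reduction that isolates the obstructions. Your decomposition into (i) reducedness of $A=k[\fC^d_G]^G$ and (ii) surjectivity of $\phi:A\to B$ is exactly equivalent to the paper's reformulation immediately following the conjecture, namely that \eqref{Chevalley-Cd} is an isomorphism if and only if $\fC^d_G\sslash G$ is reduced and normal: once $A$ is reduced, Hunziker's theorem makes $\phi$ the normalization map, so surjectivity is the same as normality.

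Where you and the paper diverge is in what you do next. You attack (ii) directly via Procesi's first fundamental theorem and the fact that multisymmetric power sums generate $k[\ft^d]^{\fS_n}$ in characteristic zero; this is correct for $\GL_n$ and, combined with the paper's Theorem~\ref{construction of sd} on reducedness, actually yields the full Conjecture~\ref{reduced-normal-GIT} for $\GL_n$, a conclusion the paper does not state explicitly. The paper instead passes to the weaker Conjecture~\ref{reduced-GIT} (existence of a $G$-invariant morphism $\on{sd}:\fC^d_G\to\ft^d\sslash W$ making \eqref{characteristic-diagram} commute), shows that this already forces reducedness of $A$, and proves it for $\GL_n$ by Deligne's determinant construction of $\on{sd}$; Procesi's generators appear there only to verify that $\on{sd}$ is compatible with the quotient map. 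Both routes rely on the same invariant-theoretic ingredients, but the paper's reformulation via $\on{sd}$ is what it actually uses downstream to define spectral data and cameral covers, so it has more structural payoff for the rest of the paper than the bare surjectivity statement.

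Your identification of (i) as the essential open point, and the reason one cannot simply invoke reducedness of $\fC^d_G$, matches the paper's discussion precisely.
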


We note that Conjecture \ref{reduced-normal-GIT} is equivalent to the asserting that the categorical quotient $\fC^d_G\sslash G$ is reduced and normal. Indeed, since $\ft^d\sslash W$ is obviously reduced and normal, if \eqref{Chevalley-Cd} is an isomorphism then $\fC^d_G \sslash G$ also is reduced and normal. Conversely, if $\fC^d_G \sslash G$ is reduced and normal, then the map \eqref{Chevalley-Cd}, known to be a normalization, has to be an isomorphism. Note also that Conjecture \ref{reduced-normal-GIT} together with~\eqref{characteristic}
imply that 
there is a $G$-invariant morphism
\begin{equation}\label{spectral-data-map}
	\on{sd}:\fC^d_G \to \ft^d \sslash W
\end{equation}
to be called the {\em universal spectral data morphism},
making the following diagram commute:
\begin{equation} \label{characteristic-diagram}
\begin{tikzcd}
\ft^d \arrow{r} \arrow{d}
& \fC^d_G \arrow{d} \arrow{ld} \\
\ft^d \sslash W \arrow{r}
& \fC^d_G \sslash G
\end{tikzcd}
\end{equation}
As the existence of this morphism would be important to the study of the Hitchin morphism,  we will state a conjecture, which is a weaker form of Conjecture \ref{reduced-normal-GIT}.

\begin{conjecture}\label{reduced-GIT}
	There exists a $G$-invariant morphism $\on{sd}:\fC^d_G \to \ft^d \sslash W$ making the diagram \eqref{characteristic-diagram} commute.
\end{conjecture}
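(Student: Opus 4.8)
The plan is to construct $\on{sd}$ from its effect on functions. Because $\ft^d\sslash W$ is affine and $\on{sd}$ is required to be $G$-invariant, the universal property of the categorical quotient $\fC^d_G\sslash G$ shows that giving $\on{sd}$ is the same as giving a $k$-algebra homomorphism $\sigma\colon k[\ft^d]^W\to k[\fC^d_G]^G$, the commutativity of \eqref{characteristic-diagram} being encoded in the compatibility of $\sigma$ with the restriction homomorphism $\rho\colon k[\fC^d_G]^G\to k[\ft^d]^W$ underlying \eqref{Chevalley-Cd}, as expressed by its two triangles; geometrically, $\on{sd}$ must lift the characteristic quotient $q\colon\fC^d_G\to\fC^d_G\sslash G$ through the finite map \eqref{Chevalley-Cd}. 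First I would make this dictionary precise, recording the $\Gm\times\GL_d$-grading so that $\sigma$ can be built homogeneously, degree by degree.

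To produce candidate elements of $k[\fC^d_G]^G$ I would polarize the basic invariants. Writing $k[\ft]^W=k[c_1,\dots,c_n]$ with $c_i$ homogeneous of degree $e_i$ (Chevalley), the expansion of $c_i(s_1\theta_1+\cdots+s_d\theta_d)$ in the auxiliary scalars $s_1,\dots,s_d$ yields polarized invariants $c_{i,\mathbf a}\in k[\fg^d]^G$; their pullbacks along $\fC^d_G\hookrightarrow\fg^d$ lie in $k[\fC^d_G]^G$, and their further restriction to $\ft^d$ is exactly the corresponding polarization of $c_i$. I would define $\sigma$ by sending each polarization of $c_i$ in $k[\ft^d]^W$ to $c_{i,\mathbf a}|_{\fC^d_G}$. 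With this choice $\rho\circ\sigma$ is the identity on the subalgebra of $k[\ft^d]^W$ generated by the polarizations, so the triangle involving the torus commutes on the nose, and the content of the conjecture is reduced to two separate assertions: that these polarizations generate all of $k[\ft^d]^W$, and that $\sigma$ is well defined, i.e. respects every relation among the generators.

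For $G=\GL_n$ both points can be settled. Since commuting elements of $\gl_n$ are simultaneously triangularizable over $\bar k$, a point of $\fC^d_{\GL_n}$ acquires a well-defined joint spectrum, an unordered $n$-tuple of vectors in $\bar{k}^d$, i.e. a point of the $n$-fold symmetric product $(\bA^d)^n\sslash S_n=\ft^d\sslash W$; the multisymmetric functions of this spectrum are precisely the polarized traces $\tr(\theta_{i_1}\cdots\theta_{i_r})$, which are honest regular $\GL_n$-invariant functions on $\fC^d_{\GL_n}$. These assemble into $\on{sd}$, the polarizations generate $k[\ft^d]^{S_n}$ by the classical theory of multisymmetric functions (Weyl), and well-definedness follows once $\fC^d_{\GL_n}\sslash\GL_n$ is known to be reduced, which is available in type $A$. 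I would write this case out first, since it disposes of the geometrically central group $\GL_n$ and pinpoints exactly where the general argument must fail.

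The crux for general $G$ is the well-definedness of $\sigma$: one must show that every polynomial relation satisfied by the polarized generators on $\ft^d$ already holds among their lifts $c_{i,\mathbf a}|_{\fC^d_G}$ in $k[\fC^d_G]^G$. Such a relation certainly holds on the dense open locus $\fC^{d,\rss}_G$, where every commuting tuple is $G$-conjugate into $\ft^d$, so the entire difficulty is whether vanishing on $\fC^{d,\rss}_G$ propagates to all of $\fC^d_G$ — precisely the (non-)reducedness issue surrounding Conjecture \ref{reduced-normal-GIT}. A second, independent obstacle is that for reflection groups outside type $A$ the polarizations of $c_1,\dots,c_n$ need not generate $k[\ft^d]^W$, Weyl's polarization theorem being sharp only in favourable cases, so there one must additionally exhibit $G$-invariant lifts of the missing generators and verify their compatibility. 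I expect the reducedness input to be the genuine bottleneck, and would therefore aim to prove the conjecture for arbitrary reductive $G$ conditionally on it, recovering the unconditional statement for $\GL_n$ from the reducedness known in type $A$.
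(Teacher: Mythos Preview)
The statement is a \emph{conjecture}; the paper does not prove it for general $G$. What the paper does prove is the case $G=\GL_n$ (Theorem~\ref{construction of sd}), and the reducedness of $\fC^d_{\GL_n}\sslash\GL_n$ is then deduced as a \emph{corollary} of that construction. Your proposal reverses this logical order: you write that for $\GL_n$ ``well-definedness follows once $\fC^d_{\GL_n}\sslash\GL_n$ is known to be reduced, which is available in type $A$.'' But reducedness for $d\ge 3$ is \emph{not} available independently---Gan--Ginzburg's result covers only $d=2$, and for $d\ge 3$ the paper's Theorem~\ref{construction of sd} is the first proof. So your $\GL_n$ argument is circular as stated.

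The paper's route around this is genuinely different from yours. Rather than defining $\sigma$ on polarized generators and then checking relations (which is where you need reducedness), the paper uses Deligne's direct functorial construction: a commuting tuple $\alpha\in\fC^d_{\GL_n}(R)$ gives a $k$-algebra map $\Sym(V_d)\to\gl_n(R)$, and composing with the determinant yields a degree-$n$ multiplicative polynomial law $\Sym(V_d)\to R$, hence a ring homomorphism $z(\alpha)\colon(\Sym(V_d)^{\otimes n})^{\fS_n}\to R$. This is manifestly functorial in $R$, so it is a morphism of schemes from the outset---no relations to verify, no reducedness needed as input. The paper then checks the diagram commutes by computing $z(\alpha)$ on the trace generators $\Tr(T(1)^{a_1}\cdots T(d)^{a_d})$. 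Your joint-spectrum description via simultaneous triangularization is exactly the $k$-point shadow of this map (the paper makes this same observation just before Theorem~\ref{construction of sd}), but promoting it to a scheme morphism requires either Deligne's trick or the reducedness you do not yet have.

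For general $G$ your diagnosis of the two obstacles---reducedness of $\fC^d_G\sslash G$ and the failure of polarizations to generate $k[\ft^d]^W$ outside type $A$---matches the paper's own assessment, and the paper leaves the conjecture open there as well.
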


We note that Conjecture \ref{reduced-GIT} implies that the categorical quotient $\fC^d_G \sslash G$ is reduced. Indeed, the right triangle of \eqref{characteristic-diagram} gives rise to a commutative triangle of rings, which says that the composition of homomorphisms
$$ k[\fC^d_G]^ G \to k[\ft^d]^W \to k[\fC^d_G]$$
is the inclusion map. It follows that the homomorphism 
$k[\fC^d_G]^ G \to k[\ft^d]^W$ is injective. Since $k[\ft^d]^W$ in an integral domain,  $k[\fC^d_G]^ G$ is also an integral domain, and in particular reduced. 

In the next section, Theorem \ref{B'}, we will construct a canonical map $\fC^d_G(k) \to \ft^d \sslash W(k)$ making the diagram \eqref{characteristic-diagram} commute on the level of $k$-points. For the moment, let us construct this map in the case $G=\GL_n$. A $k$-point $\theta\in\fC^d_G(k)$ consists of commuting family of endomorphisms $\theta_1,\ldots,\theta_d$ on the standard $n$-dimensional $k$-vector space $k^n$. It equips with $k^n$ a structure of module over the polynomial algebra $\Sym(V_d)=k[v_1,\ldots,v_d]$, where 
$v_i$ acts by $\theta_i$. Let $F$ denote the corresponding finite $\Sym(V_d)$-module. We have a decomposition $F=\bigoplus_{\alpha\in\A^d} F_\alpha$ where $F_\alpha$ is a $\Sym(V_d)$-module annihilated by some power of the maximal ideal $\fm_\alpha$ corresponding to the point $\alpha\in \A^d(k)$ where $\A^d=\Spec(\Sym(V_d))$. This decomposition gives rise to a $0$-cycle 
$$z(\theta)=\sum_{\alpha\in \A^d(k)} \lg(F_\alpha)\alpha.$$
of length $n$ in $\A^d$. This construction gives rise to a $G(k)$-invariant map $\fC^d_G(k) \to \Chow_n(\A^d)(k)$ where 
$$\Chow_n(\A^d)=\Spec ((\Sym(V_d)^{\otimes n})^{\fS_n}).$$
As $G=\GL_n$, one can identify $\Chow_n(\A^d)$ with $\ft^d \sslash W$ and we thus obtain 
the desired map from $\fC^d_G(k)$ to $\ft^d\sslash W(k)$.
We shall show that the construction above works in families.

\begin{theorem}\label{construction of sd}
	Conjecture \ref{reduced-GIT} holds in the case of $\GL_n$. In particular, for $G=\GL_n$, the categorical quotient $\fC^d_G\sslash G$ is reduced. 
\end{theorem}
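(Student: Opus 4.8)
The plan is to upgrade the pointwise assignment $\theta\mapsto z(\theta)$ to a genuine morphism of schemes $\on{sd}\colon \fC^d_{\GL_n}\to \ft^d\sslash W$ and then read off both the commutativity of \eqref{characteristic-diagram} and the reducedness of the quotient. Throughout write $\fC=\fC^d_{\GL_n}$ and $A=\Sym(V_d)=k[v_1,\dots,v_d]$, so that $\A^d=\Spec A$ and, as recalled above, $\ft^d\sslash W\cong \Chow_n(\A^d)=\Spec\,\mathrm{TS}^n(A)$ with $\mathrm{TS}^n(A)=(A^{\otimes n})^{\fS_n}$ the algebra of multisymmetric functions.

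First I would produce the universal module. Over $\fC$ the tautological commuting endomorphisms turn the trivial bundle $\cF=\cO_\fC^{\,n}$ into a module over $A\otimes_k\cO_\fC$ which is locally free of rank $n$ over $\cO_\fC$; for $a\in A$ write $m_a\in \End_{\cO_\fC}(\cF)$ for the resulting multiplication operator. The heart of the argument is to attach to this data a canonical $k$-algebra homomorphism $\on{sd}^{\ast}\colon \mathrm{TS}^n(A)\to \cO_\fC$. I would obtain it from the degree-$n$ multiplicative polynomial law $a\mapsto \det_{\cO_\fC}(m_a)$ (the norm of $\cF$): by the divided-power/norm formalism such a law corresponds exactly to an algebra map out of $\mathrm{TS}^n(A)$, and this is what defines $\on{sd}$ as a morphism of schemes. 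Since $\det$ and $\tr$ of the $m_a$ are coefficients of characteristic polynomials, they are invariant under simultaneous conjugation, so $\on{sd}^{\ast}$ lands in $k[\fC]^{\GL_n}$ and $\on{sd}$ is $\GL_n$-invariant. Two features of this construction are recorded for later use: on a $k$-point $\theta$ with $F=\bigoplus_\alpha F_\alpha$ one has $\det(m_a)=\prod_\alpha a(\alpha)^{\lg(F_\alpha)}$, so $\on{sd}$ recovers the cycle $z(\theta)=\sum_\alpha \lg(F_\alpha)\,\alpha$; and, as a universal identity (Newton's relations between the norm and the traces), the multisymmetric power sum $\psi(g)=\sum_j g(x_j)$ pulls back to the trace, $\on{sd}^{\ast}(\psi(g))=\tr(m_g)$ for every $g\in A$.

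Granting the morphism $\on{sd}$, the two triangles of \eqref{characteristic-diagram} become formal. For the upper triangle, restrict to $\ft^d\hookrightarrow\fC$: there $\cF$ splits as a sum of $n$ lines on which $A$ acts through the tautological characters, the norm becomes the product of these characters, and hence $\on{sd}|_{\ft^d}$ is precisely the quotient map $\ft^d\to\ft^d\sslash W$. For the lower triangle I would invoke classical invariant theory: since $\GL_n$ is reductive the restriction $k[\fg^d]^{\GL_n}\twoheadrightarrow k[\fC]^{\GL_n}$ is surjective, and by Procesi's theorem the source is generated by traces of words; because the universal endomorphisms commute on $\fC$, these words collapse to monomials, so $k[\fC]^{\GL_n}$ is generated by the monomial traces $p_{\mathbf a}=\tr(\theta^{\mathbf a})=\tr(m_{v^{\mathbf a}})$. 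The restriction $\res\colon k[\fC]^{\GL_n}\to k[\ft^d]^W$ underlying \eqref{Chevalley-Cd} sends $p_{\mathbf a}$ to the power sum $\psi(v^{\mathbf a})$, whence by the compatibility of the previous paragraph $\on{sd}^{\ast}(\res(p_{\mathbf a}))=\tr(m_{v^{\mathbf a}})=p_{\mathbf a}$. As $\on{sd}^{\ast}\circ\res$ is an algebra homomorphism fixing a generating set, it is the identity of $k[\fC]^{\GL_n}$; equivalently the composite $k[\fC]^{\GL_n}\to k[\ft^d]^W\to k[\fC]$ is the inclusion, which is exactly the content of \eqref{characteristic-diagram} and of Conjecture \ref{reduced-GIT} for $\GL_n$. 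Reducedness is then immediate: $\res$ is injective and $k[\ft^d]^W$ is a domain, so $k[\fC]^{\GL_n}$ is a domain, a fortiori reduced. (In fact the symmetric computation on $\ft^d$ gives $\res\circ\on{sd}^{\ast}=\mathrm{id}$ as well, so $\res$ is an isomorphism and $\fC^d_{\GL_n}\sslash\GL_n\cong \Chow_n(\A^d)$, recovering Gan--Ginzburg when $d=2$.)

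The main obstacle is the construction of $\on{sd}$ in the second paragraph: one must produce the algebra map $\mathrm{TS}^n(A)\to\cO_\fC$ honestly over the base $\fC$, which is neither reduced nor irreducible once $d\ge 3$, so the defining multisymmetric relations cannot be checked by restricting to a dense locus. The point is that the norm/divided-power formalism manufactures the map intrinsically from the rank-$n$ module $\cF$ --- equivalently, the relations among the traces $\tr(m_g)$ needed for well-definedness are forced by Cayley--Hamilton on a rank-$n$ module rather than by any reducedness hypothesis --- so this step is precisely where the special structure of $\GL_n$, namely a faithful $n$-dimensional representation giving a universal finite module, is used, and where a general reductive $G$ resists the same treatment.
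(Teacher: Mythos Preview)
Your proposal is correct and follows essentially the same route as the paper: both construct $\sd$ via Deligne's determinant/norm law $a\mapsto\det(m_a)$ to obtain the algebra homomorphism out of $\mathrm{TS}^n(A)=(\Sym(V_d)^{\otimes n})^{\fS_n}$, then invoke Procesi's generators together with the trace identity $\sd^*(\psi(g))=\tr(m_g)$ to check that $k[\fC]^{\GL_n}\to k[\ft^d]^W\to k[\fC]$ is the natural inclusion. The only cosmetic difference is that you cite the trace identity as a known consequence of the norm formalism (Newton's relations), whereas the paper extracts it by hand as the $x^{n-1}$-coefficient of $z(\alpha)\bigl((x-v^{\mathbf a})^{\otimes n}\bigr)=\det\bigl(x\cdot\id-m_{v^{\mathbf a}}\bigr)$.
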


\begin{proof}
The construction of the canonical map 
$\sd:\fC_G^d\to \ft^d \sslash W=\Chow_n(\A^d)$ in the case $G=\GL_n$ is due to Deligne \cite[Section 6.3.1]{Deligne}. For reader's convenience, we will recall his construction. For any $k$-algebra $R$,
we will construct a functorial map $\fC^d_G(R)\to \Chow_n(\A^d)(R)$ following Deligne. A collection of $d$ matrices $\alpha_1,\ldots,\alpha_d \in\fg(R)=\gl_n(R)$ gives rise to a $k$-linear map $\alpha:V_d \to \fg(R)$. If $\alpha_1,\ldots,\alpha_d$ commute with each other, $\alpha$ gives rise to a homomorphism of $k$-algebras \[\Sym(\alpha):\Sym(V_d)\to \fg(R).\] 
By composing with the determinant, we get a map $\det\circ\Sym(\alpha):\Sym(V_d)\to R$ which is a homogenous algebraic map of degree $n$ on the infinite dimensional vectors space $\Sym(V_d)$. It must derive from a polynomial linear map 
\beq\label{z(alpha)}
z(\alpha):(\Sym(V_d)^{\otimes n})^{\fS_n} \to R
\eeq
characterized by the property that 
\[z(\alpha)(f^{\otimes n})=\det\circ\Sym(\alpha)(f)\]
for $f\in\Sym(V_d)$.
Since $\det\circ\Sym(\alpha)$ is multiplicative, $z(\alpha)$ is a homomorphism of $k$-algebras. In other words, $z(\alpha)$ defines a $R$-point of $\Chow_n(\A^d)$.	
This finishes the construction of the map $\sd:\fC_G^d\to \ft^d \sslash W$.

We shall prove that the compostion 
$\frak C^d_G\stackrel{\sd}\to\ft^d\sslash\rW\stackrel{\eqref{Chevalley-Cd}}\to\frak C^d_G\sslash G$ is the quotient map.
Equivalently, the induced map 
\[
k[\frak C^d_G]^G\rightarrow k[\frak t^d]^\rW\stackrel{}\rightarrow k[\frak C^d_G]
\]
on rings of functions 
is the natural inclusion map.
\quash{
Consider the following diagram
\beq\label{d:first reduction}
\xymatrix{&\frak C^d_G\ar[r]\ar[d]\ar[ld]_\chi&\frak g^d\ar[d]\\
\ft^d/\rW\ar[r]^q&\frak C^d_G/G\ar[r]^s&\frak g^d/G}
\eeq
As the right square diagram in \eqref{d:first reduction} is commutative and $s$ is a closed embedding, it suffices to show that the outer diagram 
in \eqref{d:first reduction} is commutative.
We will prove the desired commutativity by showing that the following diagram 
\beq\label{d:second reduction}
\xymatrix{k[\frak C^d_G]&k[\frak g^d]\ar[l]\\
k[\ft^d]^\rW\ar[u]^h&k[\frak g^d]^G\ar[l]\ar[u]}
\eeq
is commutative. Here the horizontal arrows are the restriction maps 
induced by the embeddings $\ft^d\to\fg^d$, $\fC^d_G\to\fg^d$
and the right vertical arrow is the natural embedding.
}

Let $X(i)\in\fg(k[\fg^d])$ be the $n\times n$ 
matrix whose $(a,b)$-entry is given 
by the coordinate function for the $(a,b)$-entry of the 
$i$-th copy of $\fg$ in $\fg^d$.
The embedding $\ft^d\to\fg^d$ gives rise to a map
$\fg(k[\fg^d])\to\fg(k[\ft^d])$ and we define 
$T(i)\in \fg(k[\ft^d])$ to be the image of $X(i)$ under this map.
We use the same notation  
$X(i)\in \fg(k[\fC_G^d])$ for the image of $X(i)$ under the natural
map $\fg(k[\fg^d])\to\fg(k[\fC_G^d])$.
It is known that (see, e.g., \cite{P}) the ring of $G$-invariant functions 
$k[\fg^d]^G$ is 
generated by 
\[\on{Tr}(X(i_1)\cdot\cdot\cdot X(i_k))\]
 where $k\in\bZ_{\geq 0}$ and $1\leq i_1,...,i_k\leq d$.
As the restriction map $k[\fg^d]^G\to k[\fC_G^d]^G$ is surjective and 
$[X(i),X(j)]=0\in\fg(k[\fC_G^d])$, it follows that 
$k[\fC^d_G]^G$ is generated by the 
$G$-invariant functions \[\on{Tr}(X(1)^{a_1}\cdot\cdot\cdot\ X(d)^{a_d})\]
where $a_j\in\bZ_{\geq 0}$.
It is easy to see that the image of 
$\on{Tr}(X(1)^{a_1}\cdot\cdot\cdot X(d)^{a_d})$
under the map $k[\fC^d_G]^G\to k[\ft^d]^\rW$ is equal to
\[\on{Tr}(T(1)^{a_1}\cdot\cdot\cdot T(d)^{a_d}).\]
Thus to prove the desired claim, it suffices to show that 
\beq\label{e:key}
z(\alpha)(\on{Tr}(T(1)^{a_1}\cdot\cdot\cdot T(d)^{a_d})=
\on{Tr}(X(1)^{a_1}\cdot\cdot\cdot X(d)^{a_d})
\eeq
where $z(\alpha)=\sd^*:k[\ft^d]^\rW=(S(V_d)^{\otimes n})^{\fS_n}\to k[\fC^d_G]$
is the map in~\eqref{z(alpha)} in the universal case: $R=k[\fC^d_G]$
and $\alpha:V_d\to\fg(R)$
corresponds to the identity map $\id\in\fC^d_G(R)$.

Let $v_1,...,v_d$ be the coordinate vectors of $V_d$.
We have 
\beq\label{iden}
S(\alpha)(v_i)=X(i)\in \fg(k[\fC^d_G])
\eeq
For any $x\in k$ consider the element 
$f=x-v_1^{a_1}\cdot\cdot\cdot v_d^{a_d}\in S(V_d)=k[v_1,...,v_d]$.
It follows from the definition of $z(\alpha)$ that 
\[
z(\alpha)(f^{\otimes n})=
\on{det}\circ S(\alpha)(f)=\on{det}(x\id-S(\alpha)(v_1^{a_1})\cdot\cdot\cdot S(\alpha)(v_d^{a_d}))=
\]
\beq\label{e:tilde h}=
x^n-\on{Tr}(S(\alpha)(v_1^{a_1})\cdot\cdot\cdot S(\alpha)(v_d^{a_d}))x^{n-1}+\cdot\cdot\cdot
.
\eeq
On the other hand, under the canonical identification
$(S(V_d)^{\otimes n})^{\fS_n}=k[\ft^d]^\rW$,
the element $f^{\otimes n}$ becomes 
\[\on{det}(x\id-T(1)^{a_1}\cdot\cdot\cdot T(d)^{a_d})\]
and it follows that 
\[
z(\alpha)(f^{\otimes n})=z(\alpha)(\on{det}(x\id-T(1)^{a_1}\cdot\cdot\cdot T(d)^{a_d}))=\]
\beq\label{e:h}
=x^n-z(\alpha)(\on{Tr}((T(1)^{a_1}\cdot\cdot\cdot T(d)^{a_d})))x^{n-1}+\cdot\cdot\cdot.
\eeq
Comparing the coefficients of $x^{n-1}$ in \eqref{e:tilde h} and \eqref{e:h}, we obtain
\beq\label{e:T=theta}
z(\alpha)(\on{Tr}(T(1))^{a_1}\cdot\cdot\cdot(T(d))^{a_d})=
\on{Tr}(S(\alpha)(v_1)^{a_1}\cdot\cdot\cdot S(\alpha)(v_d)^{a_d}),
\eeq
and it implies
\[
z(\alpha)(\on{Tr}(T(1))^{a_1}\cdot\cdot\cdot(T(d))^{a_d})\stackrel{\eqref{e:T=theta}}=
\on{Tr}(S(\alpha)(v_1)^{a_1}\cdot\cdot\cdot S(\alpha)(v_d)^{a_d})\stackrel{\eqref{iden}}=
\on{Tr}( X(1)^{a_1}\cdot\cdot\cdot X(d)^{a_d}).
\]
Equation \eqref{e:key} follows.
This completes the proof of the proposition.
\end{proof}

Although we don't know the validity of Conjectures \ref{reduced-normal-GIT} and \ref{reduced-GIT} in general, we know they are true on the level of topological spaces. This will allow us to work around and predict the image of the Hitchin map. 

\begin{remark}
In \cite{GG}, Gan and Ginzburg proved the reducedness of 
$\fC_G^d\sslash G$ in the case $G=\GL_n$, $d=2$, by a different method.

\end{remark}

\section{Weyl's polarization and the Hitchin morphism}
Weyl's polarization is a method to construct $G$-invariant functions on the space $\fg^d$ of $d$ arbitrary elements $\theta_1,\ldots,\theta_d \in \fg$. 
The idea is as follows.
Given a $G$-invariant function $c$ on $\fg$ and $x_1,...,x_d\in k$, the map
$$(\theta_1,\ldots,\theta_d)\mapsto c(x_1\theta_1+\cdots+x_d\theta_d)$$
defines a $G$-invariant function on $\fg^d$.
Although those $G$-invariant functions on $\fg^d$  in general may not generate $k[\fg^d]^{G}$ (see, e.g., \cite{LMP}), as we shall see, they are close to forming a set of generators of the ring $k[\fC_{G}^d]^{G}$ of $G$-invariant functions on the commuting scheme, and they do in the case $G=\GL_n$.

We will formalize the construction above as follows. 
For every affine variety $Y$ equipped with an action of $\Gm$, 
the functor on the category of $k$-algebras which associates with each $k$-algebra $R$ the set of $\Gm$-equivariant maps $V_d\otimes_k R \to Y$
is representable by an affine scheme, denoted by 
$Y_{\bG_m}^{V_d}$.
For instance, if $Y$ is the affine line $\bA^1=\Spec (k[x])$ equipped with an action of $\Gm$ given by $t.x=t^e x$, then $Y^{V_d}_{\Gm}$ is the $e$-th symmetric tensor of $\A^d=\Spec(\Sym(V_d))$. For $Y=\fg$, the space $\fg^{V_d}_{\Gm}$ can be identified with $\fg^d$. Let us also consider the case $Y=\fc$ where $\fc=\fg\sslash G$. Since $\fc$ is isomorphic to an $n$-dimensional affine space with homogenous coordinates $c_1,\ldots,c_n$ of degree $e_1,\ldots,e_n$, the space $A=\fc^{V_d}_{\Gm}$ 
is isomorphic to:
\begin{equation}\label{A}
	A \simeq \prod_{i=1}^n\Sym^{e_i}\A^d.
\end{equation}
The isomorphism depends on the choice of homogenous coordinates $c_1,\ldots, c_n$. 

Since the morphism $\fg\to \fc$ is $G$-invariant and $\Gm$-equivariant, it induces a $G$-invariant morphism 
\begin{equation}\label{polarization}
	\pol: \fg^d \to A\simeq \prod_{i=1}^n\Sym^{e_i}\A^d
\end{equation}
which embodies Weyl's polarization method for the diagonal action of $G$ on $\fg^d$. 
For example, in case $G=\GL_n$, given $d$ arbitrary matrices $\theta=(\theta_1,...,\theta_d)\in(\gl_n)^d$,
the trace of the $i$-th power of 
$x_1\theta_1+\cdots+x_d \theta_d$ is an $i$-th symmetric form in the variables $x_1,...,x_d$ and thus 
defines a point $\pol_i(\theta)$ in $\Sym^i\A^d$ and we have 
$\pol(\theta)=(\pol_1(\theta),...,\pol_n(\theta))$.
 Instead of using trace of powers of an endomorphism, we may also use 
the homogenous coordinates of $\fc$ given by the $i$-th coefficient of the characteristic polynomial of an endomorphism for $1\leq i\leq n$.
The latter invariant function is used by Simpson to define the Hitchin morphism for $\GL_n$ for higher dimensional varieties \cite{Simpson 2}. We have seen that the choice of coordinates of $\fc$ is unimportant as it just gives rise to different isomorphisms \eqref{A}.
 
By restricting \eqref{polarization} to the commuting scheme $\fC^d_G$, we obtain a $G$-invariant morphism 
\begin{equation} \label{h}
	h:\fC^d_G \to A.
\end{equation}
To study the structure of the Hitchin morphism, and in particular the image thereof, we need to understand the image of the map~\eqref{h} and its relation to the Chevalley restriction morphism \eqref{Chevalley-Cd}. 
For that purpose, we will also need to use Weyl's polarization construction for the diagonal action of $W$ on $\ft^d$. The morphism $\ft\to \fc=\ft\sslash W$ is $W$-invariant and $\Gm$-equivariant. As a result, we have a $W$-invariant morphism
\begin{equation}\label{polW}
	\pol_W:\ft^d \sslash W \to A.
\end{equation}
We recall the following \cite[Theorem 2.15]{LMP}:
\begin{theorem}\label{B}
	The morphism $\pol_W$ of \eqref{polW} is finite and induces an injective map on $k$-points. In other words, there exists a unique reduced closed subscheme $B$ of $A$ such that $\pol_W$ factors through a morphism 
	\begin{equation} \label{b}
		b: \ft^d \sslash W\to B,
	\end{equation}
	which is a universal homeomorphism and normalization. For $G=\GL_n$, $\pol_W$ is a closed embedding and $b$ is an isomorphism. 
\end{theorem}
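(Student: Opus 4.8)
The plan is to deduce everything from two soft geometric inputs---a contracting $\Gm$-action and the irreducibility of $\ft^d$---plus one genuine classical theorem in the $\GL_n$ case. Throughout I identify $\ft^d$ with $\Hom_k(V_d,\ft)$, so that for $\phi\in\ft^d$ the point $\pol_W(\phi)\in A=\fc^{V_d}_{\Gm}$ is just the composite $\pi\circ\phi\colon V_d\to\fc$, where $\pi\colon\ft\to\fc=\ft\sslash W$ is the quotient map; concretely its $i$-th component is the homogeneous degree-$e_i$ function $v\mapsto c_i(\phi(v))$ on $V_d$, i.e.\ the $i$-th polarization of $c_i$. Note that $\pol_W$ is $\Gm$-equivariant for the scaling actions, and that both source and target are affine cones whose coordinate rings are non-negatively graded with degree-$0$ part $k$.

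First I would prove injectivity on $k$-points, the slickest step. Suppose $\phi,\phi'\in\ft^d$ satisfy $\pol_W(\phi)=\pol_W(\phi')$, i.e.\ $\pi(\phi(v))=\pi(\phi'(v))$ for every $v\in V_d$. Since the fibres of $\pi$ are exactly the $W$-orbits, for each $v$ there is some $w\in W$ with $\phi'(v)=w\,\phi(v)$; hence $V_d=\bigcup_{w\in W}\ker(\phi'-w\phi)$, a finite union of linear subspaces. As $k$ is infinite and $V_d$ is irreducible, one of these subspaces must equal $V_d$, so $\phi'=w\phi$ for a single $w\in W$ and $\phi,\phi'$ define the same point of $\ft^d\sslash W$.

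Next, finiteness. Because $\pol_W$ is an equivariant map of affine cones, by the graded form of Nakayama's lemma (the elementary $\Gm$-graded fact recalled in Section 2) it suffices to show that $\pol_W^{-1}(0)$ is set-theoretically the origin. If $\pol_W(\bar\phi)=0$ then $c_i(\phi(v))=0$ for all $i$ and all $v$, so $\phi(v)\in\pi^{-1}(0)$; but $\pi^{-1}(0)$ is the $W$-orbit of $0$, namely $\{0\}$, whence $\phi=0$. Thus $\pol_W$ is finite. The remaining formal assertions then follow. Set $R_{\mathrm{pol}}:=\pol_W^*(k[A])\subset k[\ft^d]^W$; this is a domain, being a subring of the domain $k[\ft^d]^W$, so $B:=\Spec R_{\mathrm{pol}}$ is the unique reduced closed subscheme of $A$ supported on the image, and $b\colon\ft^d\sslash W\to B$ is finite, surjective and injective on $k$-points. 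In characteristic $0$ a general fibre of a dominant finite morphism of varieties has cardinality equal to the degree of the function-field extension; injectivity forces this degree to be $1$, so $b$ is birational. Since the source $\ft^d\sslash W$ is normal (a finite-group quotient of an affine space) and $B$ is reduced, $b$ is the normalization of $B$; and being finite, surjective and injective on $k$-points over $k=\bar k$ of characteristic $0$, it is radicial, hence a universal homeomorphism.

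Finally the $\GL_n$ case, where the real content lies. Here $W=\fS_n$, $\fc=\A^n$ with $e_i=i$, and $\ft^d\sslash W=\Chow_n(\A^d)$ is the ring of multisymmetric functions in the $n$ ``points'' $P_1,\dots,P_n\in\A^d$ encoded by $\phi$. Taking the $c_i$ to be power sums, the coefficients in $v$ of $c_i(\phi(v))=\sum_{j}\langle v,P_j\rangle^{i}$ are, up to nonzero multinomial constants, exactly the polarized power sums $\sum_{j=1}^n P_j^{\mathbf a}$ with $|\mathbf a|=i$; as $i$ runs over $1,\dots,n$ these produce all polarized power sums of total degree at most $n$, and $R_{\mathrm{pol}}$ is the subring they generate. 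The assertion $R_{\mathrm{pol}}=k[\ft^d]^{W}$ is then precisely the fundamental theorem of multisymmetric polynomials in characteristic $0$: the polarized power sums of degree $\le n$ generate the full ring of $\fS_n$-invariants of $(\A^d)^n$. Granting this, $\pol_W^*$ is surjective, so $\pol_W$ is a closed embedding and $b$ is an isomorphism. I expect this multisymmetric generation statement---not any of the soft steps above---to be the main obstacle, both because it is the single place where the characteristic-$0$ hypothesis and the sharp degree bound $e_i\le n$ are essential, and because no analogue holds for general $G$, which is exactly why $b$ need only be a normalization rather than an isomorphism in the general statement.
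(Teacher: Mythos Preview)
Your argument is correct. Note, however, that the paper does not supply its own proof of this theorem: it is stated as a result \emph{recalled} from \cite[Theorem~2.15]{LMP}, and the $\GL_n$ case is attributed in the remark immediately following to Weyl's first fundamental theorem for the symmetric group \cite[II.A.3]{Weyl}. So there is no in-paper proof to compare against, only these citations.

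That said, your write-up fills in exactly what those citations contain. Your injectivity argument (the ``one $w$ works for all $v$'' trick via $V_d=\bigcup_{w}\ker(\phi'-w\phi)$) and your finiteness argument (the graded/cone reduction to checking $\pol_W^{-1}(0)=\{0\}$) are the standard proofs one finds in \cite{LMP} and \cite{Hunziker}. Your identification of the $\GL_n$ statement with the classical theorem that polarized power sums of degree $\le n$ generate the multisymmetric functions is precisely the paper's attribution to Weyl, and your remark that this is the only step with genuine content --- and the reason the closed-embedding conclusion can fail for other types --- is borne out by the paper's remark that $\pol_W$ is a closed embedding for types $B$, $C$ (Hunziker) but not for type $D$ (Wallach). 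One small quibble: the ``elementary $\Gm$-graded fact recalled in Section~2'' is not actually stated there; Section~2 only discusses the identification of $\Gm$-homogeneous maps with symmetric powers, not the finiteness criterion for cone maps. The criterion is of course standard, but you should cite it independently rather than pointing to Section~2.
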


\begin{remark}
In the case $G=\GL_n$,
the theorem above 
is  the first fundamental theorem for symmetric groups, which is a classical theorem of Weyl \cite[II.A.3]{Weyl}. 
According to Hunziker \cite{Hunziker}, $\pol_W$ is a closed embedding for type B,C.	
According to Wallach \cite{Wallach}, $\pol_W$ fails to be a closed embedding for type D.  
\end{remark}	

\begin{example}
	Let us describe the closed subscheme $B$ of $A$ in in the case $G=\SL_2$ and $d=2$. 
In this case the Cartan algebra can be identified with $\ft\simeq \Spec (k[t])$. The Weyl group $W=\fS_2$ on $\ft$ by $w(t)=-t$ where $w$ is the non-trivial element of $W$. The categorical quotient $\fc=\Spec (k[u])$ with $u=t^2$ and the morphism $\fg\to \fc$ is given $u=\det(g)$. Since the exponent $e=2$, we have $A=\Sym^2\A^2$ which is a 3-dimensional vector space. The map $\ft^2=\A^2 \to A=\Sym^2(\A^2)$ is given by $v\mapsto v^2$. In coordinates, this is the map $\A^2\to \A^3$ given by $(x,y)\mapsto (x^2,2xy,y^2)$. Thus $B$ is the closed subscheme of $A=\A^3$ defined by the equation $b^2-4ac=0$ which can be identified with the categorical quotient of $\A^2$ by the action of $\fS_2$ given by $(x,y)\mapsto (-x,-y)$.
\end{example}
	
We have the following factorization of $h:\fC^d_G \to A$:
\begin{theorem} \label{B'}
There exists a closed subscheme $B'$ of $A$, which is a thickening of the closed subscheme $B$ of $A$, as in Theorem \ref{B}, such that the morphism $h:\fC^d_G \to A$ in \eqref{h} factors through a morphism
\begin{equation} \label{sd'}
\on{sd}':\fC^d_G \to B'.
\end{equation}
In particular, there is a canonical 
$G(k)$-equivariant morphism
$\fC^d_G(k) \to \ft^d\sslash\rW(k)$.
For $G=\GL_n$, we have $B'=B$ and~\eqref{sd'}
is equal to the universal spectral data morphism
$\sd:\fC^d_G\mapsto \ft^d\sslash\rW\is B$
constructed in Theorem \ref{construction of sd}.
\end{theorem}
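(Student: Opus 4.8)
The plan is to define $B'$ as the scheme-theoretic image of the morphism $h\colon \fC^d_G\to A$ of \eqref{h}, and then to pin it down using the two universal homeomorphisms already at our disposal: the Chevalley restriction morphism \eqref{Chevalley-Cd}, which is a universal homeomorphism by Hunziker's theorem, and the normalization $b$ of \eqref{b}, which is a universal homeomorphism by Theorem \ref{B}. Since $h$ factors tautologically through its scheme-theoretic image, this automatically produces the factorization $\sd'\colon\fC^d_G\to B'$; the real work is to show that $B'$ is a thickening of $B$.

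First I would record the compatibility $\bar h\circ\nu=\pol_W$, where $\nu$ denotes \eqref{Chevalley-Cd}, $\bar h\colon\fC^d_G\sslash G\to A$ is the descent of the $G$-invariant morphism $h$, and $\pol_W$ is \eqref{polW}. On rings of functions both sides are the composite $k[A]\xrightarrow{h^\ast}k[\fC^d_G]^G\to k[\ft^d]^{\rW}$ whose second arrow is restriction along the closed immersion $\ft^d\hookrightarrow\fC^d_G$, and this composite is by definition $\pol_W^\ast$. Granting this, the set-theoretic image of $h$ is computed as follows: the GIT quotient map $p\colon\fC^d_G\to\fC^d_G\sslash G$ is surjective and $\nu$ is surjective, so the image of $h$ equals the image of $\bar h\circ\nu=\pol_W$, which equals $|B|$ since $b$ is surjective. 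As $|B|$ is closed, the underlying space of $B'$ is exactly $|B|$.

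Next I would establish $B\subseteq B'$. Restricting $h$ to $\ft^d$ gives the dominant morphism $b\circ q\colon\ft^d\to B$ onto the reduced scheme $B$, where $q\colon\ft^d\to\ft^d\sslash\rW$ is the quotient; its scheme-theoretic image is therefore all of $B$, and monotonicity of scheme-theoretic images under precomposition with $\ft^d\hookrightarrow\fC^d_G$ then gives $B\subseteq B'$. Combined with $|B'|=|B|$ and the reducedness of $B$, this yields $B'_{\on{red}}=B$, so $B'$ is a thickening of $B$. On $k$-points $\sd'$ lands in $B'(k)=B(k)$, and $b(k)\colon(\ft^d\sslash\rW)(k)\to B(k)$ is a bijection because $b$ is a universal homeomorphism and $k$ is algebraically closed; composing with $b(k)^{-1}$ yields the canonical $G(k)$-invariant map $\fC^d_G(k)\to(\ft^d\sslash\rW)(k)$.

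Finally, for $G=\GL_n$ Theorem \ref{B} gives that $b$ is an isomorphism, so $B=\ft^d\sslash\rW$, and Theorem \ref{construction of sd} supplies a genuine scheme morphism $\sd$ with $\nu\circ\sd=p$. Then $\pol_W\circ\sd=\bar h\circ\nu\circ\sd=\bar h\circ p=h$, so $h$ already factors through the closed subscheme $B$ via $\sd$; hence $B'\subseteq B$, forcing $B'=B$, and since $B\hookrightarrow A$ is a monomorphism the two factorizations agree, $\sd'=\sd$. The one genuinely delicate point, and the reason one obtains only a thickening in general, is that $B'$ is equally the scheme-theoretic image of $\bar h$, so it would be reduced---and thus equal to $B$---precisely when $k[\fC^d_G]^G$ is reduced; this is exactly the content of Conjecture \ref{reduced-normal-GIT}, which is why the $\GL_n$ case, where reducedness is known via Theorem \ref{construction of sd}, is the one where $B'=B$.
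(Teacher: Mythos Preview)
Your proof is correct and follows exactly the approach the paper sketches in two terse sentences: descend the $G$-invariant map $h$ to $\bar h$ on $\fC^d_G\sslash G$, use Hunziker's universal homeomorphism $\nu$ together with the compatibility $\bar h\circ\nu=\pol_W$ to identify the set-theoretic image with $|B|$, and then take the scheme-theoretic image for $B'$; your treatment simply makes each of these steps explicit and handles the $\GL_n$ case by the same appeal to Theorem~\ref{construction of sd}. One small caveat on your closing side remark: reducedness of $k[\fC^d_G]^G$ certainly forces $B'$ reduced, but the converse is not obvious since $\bar h^*$ need not surject onto $k[\fC^d_G]^G$ (polarizations do not generate $k[\fg^d]^G$ in general), and Conjecture~\ref{reduced-normal-GIT} asks for normality as well as reducedness, so the relevant reference there is rather Conjecture~\ref{reduced-GIT}.
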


\begin{proof}
By \cite[Theorem 6.3]{Hunziker}, the Chevalley restriction map
$\ft^d\sslash\rW\to\fC^d_G \sslash G$ is a homeomorphism. 
Therefore, the diagram~\eqref{characteristic-diagram} implies that 
the $G$-invariant morphism $h:\fC^d_G\to A$ factors through a 
thickening $B'$ of the closed subscheme B of A. The first claim follows.
The second claim follows from Theorem \ref{construction of sd}.
\quash{
We first prove this assertion on the level of $k$-points. This amounts to prove that a $k$-point of $A$ lies in the image of the morphism $h$ of \eqref{h} if and only if it lies in the image of the morphism $\pol_W$ of \eqref{polW}. Since the Cartan algebra $\ft$ is commutative, $\ft^d$ is contained in $\fC^d_G$ and therefore its image in $A$ is contained in the image of $\fC^d_G(k)$. In the opposite direction, it follows from Richardson's theorem \cite[3.6]{Richardson} that the $G$-orbit passing by a point $(x_1,\ldots,x_d)\in \fC^d_G(k)$ is closed if and only if it has non-empty intersection with $\ft^d$. Indeed, Richardson proved that the $G$-orbit passing by $(x_1,\ldots,x_d)\in \fg^d$ is closed if and only if elements $x_1,\ldots,x_d$ generate a reductive Lie algebra. In the case where $x_1,\ldots,x_d$ commute with each other, this means $x_1,\ldots,x_d$ lie in a same Cartan subalgebra (or simultaneously diagonalizable in $\gl_n$ case). Since every orbit has a closed orbit contained in its closure, for every $x\in \fC^d_G(k)$, there exists $y\in \ft^d(k)$ lying in the closure of the $G$-orbit of $x$. Since $x$ is a $G$-invariant map, $x$ and $y$ have the same image in $A$.} 
\end{proof}

One may ask whether Theorem \ref{B'} holds for $B'=B$ for general $G$. This would follow from Conjecture \ref{reduced-GIT}.

\section{Postulated image of the Hitchin morphism and cameral covers}\label{postulated image}

Let $X$ be a proper smooth algebraic variety over $k$ of dimension $d$. A Higgs bundle over $X$ is represented by a map $\theta:X\to [\fC^d_G/(G\times \GL_d)]$ lying over the map $\tau_X^*:X\to {\mathbb B} \GL_d$ given by its cotangent bundle $T_X^*$. By composing it with the map $[h]:[\fC^d_G/(G\times \GL_d)] \to [A/\GL_d]$ derived from~\eqref{h}, we obtain the Hitchin morphism \[h_X:\cM_X\to \cA_X\] where $\cA_X$ is the space of maps $X\to [A/\GL_d]$ lying over $\tau_X^*$. 
By choosing a system of homogenous coordinates $c_1,\ldots,c_n$ of $\fc$ of degrees $e_1,\ldots,e_n$, we can identify $\cA_X$ with the vector space
$\bigoplus_{i=1}^n\rH^0(X,\Sym^{e_i}\Omega_X^1)$. 

Let $\cB_X$ denote the space of maps $X\to [B/\GL_d]$, where $B$ is the closed subscheme of $A$ defined in Theorem \ref{B}, lying over $\tau_X^*$. It is clear that $\cB_X$ is a closed subscheme of $\cA_X$. We call it the {\em postulated image} of the Hitchin map $h_X$. By replacing $B$ by its thickening $B'$ as in Theorem \ref{B'}, we have a thickening $\cB'_X$ of $\cB_X$. 
The schemes $\cB_X$ and $\cB'_X$ have the same underlying topological space.

\begin{proposition}
Let $X$ be a proper smooth algebraic variety of dimension $d$ over an algebraically closed field $k$ of characteristic zero, and let $\cM_X$ be the moduli stack of Higgs bundles over $X$. Then the Hitchin morphism $h_X:\cM_X\to \cA_X$ factors through a
map
$$\sd'_X:\cM_X\to \cB'_X$$
to be called the spectral data morphism.
In particular, the image of every geometric point $\theta\in \cM_X(k)$ under the 
Hitchin morphism 
belongs to $\cB_X(k)$. 
\end{proposition}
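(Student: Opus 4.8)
The plan is to reduce the global statement about the Hitchin morphism to the local, universal statement already established in Theorem \ref{B'}. The key observation is that all the relevant objects on $X$ are obtained by the same ``mapping stack'' construction applied to morphisms of the target stacks $[\fC^d_G/(G\times\GL_d)]$, $[A/\GL_d]$, and $[B'/\GL_d]$, all fibered over $\mathbb{B}\GL_d$. Concretely, a Higgs bundle is a map $\theta:X\to[\fC^d_G/(G\times\GL_d)]$ over $\tau_X^*:X\to\mathbb{B}\GL_d$, the Hitchin morphism is postcomposition with $[h]:[\fC^d_G/(G\times\GL_d)]\to[A/\GL_d]$, and $\cA_X$, $\cB_X$, $\cB'_X$ are the spaces of maps into $[A/\GL_d]$, $[B/\GL_d]$, $[B'/\GL_d]$ over $\tau_X^*$. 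Since these mapping-space assignments are functorial in the target, it suffices to factor $[h]$ through $[B'/\GL_d]$ at the level of stacks.

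The central step is therefore to upgrade Theorem \ref{B'} to the equivariant/stacky setting. First I would recall that Theorem \ref{B'} produces a morphism $\sd':\fC^d_G\to B'$ with $h=j\circ\sd'$, where $j:B'\into A$ is the closed immersion of the thickening of $B$. The morphism $\sd'$ is $G$-invariant by construction (being a factorization of the $G$-invariant morphism $h$ through a subscheme of $A$, on which $G$ acts trivially since $A=\fc^{V_d}_{\Gm}$ carries only the residual $\GL_d$-action). Moreover $\sd'$ is manifestly $\GL_d$-equivariant: the polarization construction and hence $h$ intertwines the $\GL_d$-action on $\fC^d_G$ coming from the action on $V_d$ with the $\GL_d$-action on $A$, and the subscheme $B'$ (being the scheme-theoretic image in the sense of Theorem \ref{B'}, defined via the $\GL_d$-invariant data of the commuting condition) is $\GL_d$-stable. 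Consequently $\sd'$ descends to a morphism of quotient stacks $[\sd']:[\fC^d_G/(G\times\GL_d)]\to[B'/\GL_d]$ fitting into a commutative triangle with $[h]$ and the immersion $[j]:[B'/\GL_d]\into[A/\GL_d]$.

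Given $[\sd']$, I would define $\sd'_X:\cM_X\to\cB'_X$ as postcomposition: for a Higgs bundle $\theta:X\to[\fC^d_G/(G\times\GL_d)]$ lying over $\tau_X^*$, set $\sd'_X(\theta)=[\sd']\circ\theta$, which is a map $X\to[B'/\GL_d]$ still lying over $\tau_X^*$ since $[\sd']$ is a morphism over $\mathbb{B}\GL_d$, and hence an $S$-point of $\cB'_X$. Functoriality in the test scheme $S$ is automatic. The identity $h_X=[j]\circ\sd'_X$ follows from $[h]=[j]\circ[\sd']$, which gives the desired factorization. For the final assertion about geometric points, I would note that $\cB_X$ and $\cB'_X$ have the same underlying topological space, so the image of any $\theta\in\cM_X(k)$ under $h_X$, which lies in $\cB'_X(k)$ by the factorization, lies in $\cB_X(k)$ as a point of the reduced structure.

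The main obstacle, and the only substantive point beyond bookkeeping, is verifying the $\GL_d$-equivariance of $\sd'$ and the $\GL_d$-stability of $B'$ carefully enough that the descent to quotient stacks is legitimate. The $G$-invariance is immediate from Theorem \ref{B'}, but the construction of $B'$ there is as a thickening determined by the topological factorization through the Chevalley homeomorphism $\ft^d\sslash W\to\fC^d_G\sslash G$, and one must check this thickening is intrinsic and therefore preserved by the $\GL_d$-action; concretely, $B'$ is the scheme-theoretic image of $h$, and scheme-theoretic image commutes with the flat base change induced by the $\GL_d$-action, so $\GL_d$-stability is forced. Once this is in place the passage to mapping stacks is formal, since $2$-categorical postcomposition preserves the condition of lying over a fixed map to $\mathbb{B}\GL_d$.
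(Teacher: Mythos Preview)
Your proposal is correct and follows essentially the same approach as the paper: both derive the factorization from Theorem~\ref{B'} by descending the $G\times\GL_d$-equivariant factorization $h=j\circ\sd'$ to quotient stacks and then postcomposing. You are more explicit than the paper about the $\GL_d$-stability of $B'$ (via scheme-theoretic image), which the paper takes for granted.

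The only minor difference is in the last sentence. You argue that $\cB_X(k)=\cB'_X(k)$ because the two schemes have the same underlying topological space; the paper instead observes that since $X$ is reduced and $B=(B')_{\mathrm{red}}$, the map $b':X\to[B'/\GL_d]$ automatically factors through $[B/\GL_d]$, producing directly an element of $\cB_X(k)$. Both arguments are valid over an algebraically closed field, but the paper's version is slightly cleaner in that it yields an actual morphism $X\to[B/\GL_d]$ rather than relying on the identification of $k$-points with closed points.
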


\begin{proof}
By Theorem \ref{B'}, for any $S$-point $\theta:S\times X\to [\fC^d_G/(G\times \GL_d)]$ in $\cM_X(S)$ where 
$S$ is a $k$-scheme, its image $h_X(\theta):S\times X \to [A/\GL_d]$ factors through $b': S\times X\to [B'/\GL_d]$. This gives the desired factorization $\sd'_X:\cM_X\to \cB'_X$ of the Hitchin morphism.
	Assume $\theta\in\cM(k)$.  
	Since $X$ is reduced, its image $b': X\to [B'/\GL_d]$
	factors through a morphism $b:X\to [B/\GL_d]$, i.e., we have $h_X(\theta)\in\cB_X(x)$.
	The proposition follows
	\end{proof}

\begin{conjecture}\label{Conj surjectivity}
For every $b\in \cB_X(k)$, the fiber $h_X^{-1}(b)$ is non-empty.	
\end{conjecture}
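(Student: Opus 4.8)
The plan is to prove non-emptiness by exhibiting, for every $b\in\cB_X(k)$, at least one geometric point of the fibre $h_X^{-1}(b)$, in the spirit of the spectral and cameral constructions that describe the fibres in the curve case. By the preceding Proposition the spectral data morphism lands in $\cB'_X$, and since $\cB_X$ and $\cB'_X$ have the same underlying $k$-points, it is enough to build a single Higgs bundle $(E,\theta)$ whose image under $h_X$ is $b$.

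First I would settle the case $G=\GL_n$, where Theorem \ref{B} gives $B=\ft^d\sslash W=\Chow_n(\A^d)$ with $B'=B$, so that $b$ is an honest point and not merely a point of a thickening. Such a $b$ is a morphism $X\to[\Chow_n(\A^d)/\GL_d]$ lying over $\tau_X^*$, that is, a family of length-$n$ cycles in the fibres of the cotangent bundle; it cuts out a spectral scheme $\pi:Y_b\to X$, a closed subscheme of the total space of $T_X^*$ that is finite of degree $n$ over $X$. The mechanism is then the spectral correspondence: any coherent sheaf $\cF$ on $Y_b$ whose direct image $E:=\pi_*\cF$ is locally free of rank $n$ yields a Higgs bundle, because the tautological linear functions on the fibres of $T_X^*$ act on $E$ as a commuting family $\theta:\cT_X\to\ad(E)$ --- integrability \eqref{integrability} holds automatically since $\cO_{Y_b}$ is commutative --- and by construction the image of $(E,\theta)$ under $h_X$ is $b$.

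The entire difficulty is thus concentrated in producing such an $\cF$ with locally free pushforward, exactly as on a curve one uses $\cF=\cO_{X_a^\bullet}$ when the spectral curve is finite flat over $X$. In dimension $d\geq 2$ the map $\pi$ is in general not flat and $Y_b$ need not be Cohen-Macaulay, so $\pi_*\cO_{Y_b}$ fails to be locally free. The remedy, carried out for $d=2$ in the body of the paper, is to replace $Y_b$ by a finite Cohen-Macaulayfication $\widetilde{Y}_b\to Y_b$: if $\widetilde{Y}_b$ is Cohen-Macaulay and finite over the regular variety $X$ with $\dim\widetilde{Y}_b=\dim X$, then $\pi$ is flat by miracle flatness, hence $\pi_*\cO_{\widetilde{Y}_b}$ is locally free, and $\cF=\cO_{\widetilde{Y}_b}$ produces the required Higgs bundle. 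For $\GL_n$ the conjecture therefore reduces to the existence of such a Cohen-Macaulayfication whose formation is compatible with the projection to $X$.

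For a general reductive $G$ I would pass to the cameral cover: lift $b$ through the universal homeomorphism $\ft^d\sslash W\to B$ of Theorem \ref{B} on $k$-points, form the Cartesian square analogous to the one recalled in Section 2 for curves, and obtain a generically $W$-Galois cover $\pi_b:\widetilde{X}_b\to X$; a $G$-Higgs bundle over $b$ should then be assembled from $W$-equivariant line-bundle data on $\widetilde{X}_b$, the abstract Higgs field arising from the tautological $\ft^d$-valued section, following \cite{Ngo}. I expect two obstacles to be the crux of the matter. The first is, once more, Cohen-Macaulayness and local freeness of the relevant direct images when $d\geq 3$, where no Cohen-Macaulayfication is available and $Y_b$ may acquire embedded components reflecting the possible non-reducedness of $\fC^d_G\sslash G$ (Conjectures \ref{reduced-normal-GIT} and \ref{reduced-GIT}). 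The second, specific to groups other than $\GL_n$, is that one only controls $B$ up to the thickening $B'$ of Theorem \ref{B'}: realizing a point of $\cB_X$ rather than of $\cB'_X$, and trivializing the $W$-equivariant gerbe that obstructs globalizing the cameral construction, would seem to require either Conjecture \ref{reduced-GIT} or a direct study of the cokernel sheaves governing the polarization map $\pol_W$.
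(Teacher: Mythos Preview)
The statement you are attempting to prove is labelled a \emph{Conjecture} in the paper, and the paper does not claim a proof of it in general. What the paper does establish is the special case $G=\GL_n$, $d=2$, $b\in\cB_X^\heartsuit(k)$ (Theorem~\ref{Hitchin fibers}), together with the example of abelian varieties immediately following the conjecture. Your outline is therefore not to be compared against a proof in the paper, because there is none; rather, your proposal is a correct diagnosis of \emph{why} the statement remains conjectural.

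Your strategy for $\GL_n$ is exactly the one the paper carries out in the surface case: build the spectral cover inside $T_X^*$, observe that it is generally not flat over $X$, replace it by a finite Cohen--Macaulayfication, and push forward the structure sheaf. The paper constructs this Cohen--Macaulayfication only for $d=2$ and $b\in\cB_X^\heartsuit(k)$, using the Hilbert scheme of points on a surface together with Serre's extension theorem across codimension~$2$ (Proposition~\ref{CM-fication}); both ingredients are specific to surfaces. Your reduction ``the conjecture therefore reduces to the existence of such a Cohen--Macaulayfication'' is accurate as far as it goes, but that existence is precisely the open problem for $d\geq 3$, and even for $d=2$ the paper does not treat $b\notin\cB_X^\heartsuit(k)$. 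One small inaccuracy: you assert that ``by construction the image of $(E,\theta)$ under $h_X$ is $b$'' for the pushforward from $\widetilde{Y}_b$; this is not automatic once you have passed to a modification of $Y_b$, and in the paper it is secured by the generalized Cayley--Hamilton theorem (Proposition~\ref{Cayley}(3)) together with the fact that $\widetilde{Y}_b\to Y_b$ is an isomorphism over a dense open.

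For general $G$ your identification of the two obstacles (no Cohen--Macaulayfication in higher dimension; the $B$ versus $B'$ discrepancy governed by Conjecture~\ref{reduced-GIT}) is again in line with the paper's own discussion, but neither obstacle is overcome there. In short: your proposal is not a proof, and it is not meant to be compared to one; it is a faithful summary of the approach the paper uses for its partial results and of the reasons the full statement is left open.
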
 

\begin{example}
Consider the case when $X$ is a $d$-dimensional abelian variety. By choosing an isomorphism between the Lie algebra of $X$ and the typical $d$-dimensional vector space $V_d$, we will have an isomorphism $\cA_X=A$ and $\cB_X=B$ which is a strict subset of $A$ for $d\geq 2$. We can also prove that the spectral data map $\cM_X(k)\to \cB_X(k)$ is surjective by restricting ourselves to the subset of $\cM_X(k)$ consisting of Higgs bundles $(E,\theta)$ where $E$ is the trivial $G$-bundle.
\end{example}

One can think of $\cB_X(k)$ as the subset of $\cA_X(k)$ consisting of points $b\in\cA_X(k)$ for which one can construct a cameral covering. 
For any scheme $Y$ with an action of $\GL_d$, we can form the twist $Y_{T^*_X}$ of $Y$ by the $\GL_d$-torsor given by $T_X^*$. Then
a point $b\in\cB_X(k)$ gives rise to a map
$b:X\to B_{T^*_X}$ and, since the map 
$(\ft^d\sslash W)_{T^*_X}\to B_{T^*_X}$ induced from 
$\ft^d\sslash W\to B$ is the normalization 
and $X$ is normal, the map $b$ lifts to a map $X\to (\ft^d\sslash W)_{T^*_X}$. We define 
$\tilde X_b$ to be the fiber product
 
\[\xymatrix{\tilde X_b\ar[r]\ar[d]&(\ft^d)_{T^*_X}\ar[d]\\
	X\ar[r]&(\ft^d\sslash W)_{T^*_X}.}\]
The projection $\tilde X_b\to X$, which is a finite surjective morphism, is called the cameral covering associated with $b$. 

Let $B^\circ$ denote the open dense locus of $B$ where the morphism $\ft^d \to B$ is a finite étale Galois morphism with Galois group $W$. This is a $\GL_d$-equivariant open subset of $B$. 
\begin{definition}
We define $\cB_X^\heartsuit(k)$ to be the open locus of $\cB_X(k)$ consisting of maps $b:X\to [B/\GL_d]$ whose image has non-empty intersection with $[B^\circ/\GL_d]$
\end{definition}
For every $b\in \cB_X^\heartsuit(k)$, the cameral covering $\tilde X_b\to X$ is generically a finite étale Galois morphism with Galois group $W$. 
We will prove Conjecture \ref{Conj surjectivity} in the case $G=\GL_n$ and $d=2$ for all $b\in \cB^\heartsuit_X(k)$. In the one-dimensional case, and for $G=\GL_n$, it is well-known that spectral curves are more convenient than cameral curves for the purpose of constructing Higgs bundles. Cameral and spectral covers are generally not flat in higher dimension, but in the case of dimension two, there is a canonical way to make them flat. 

From now on, we will assume $G=\GL_n$.
\section{Spectral covers} 

Let us first review the construction of the universal spectral cover for $d=1$. For the group $\GL_n$, $\ft=\bA^n=\Spec(k[x_1,\ldots,x_n])$ is the space of diagonal matrices with entries $x_1,\ldots,x_n$. 
The Weyl group $W$ is the symmetric group $\fS_n$ acting on $\bA^n$ by permutation of coordinates $x_1,\ldots,x_n$. 
By the fundamental theorem of symmetric polynomials, the categorical quotient $\fc=\bA^n\sslash \fS_n$ is the affine space of coordinates
\begin{eqnarray*}\label{symmetric-poly}
	c_1 & = & x_1+\cdots+x_n,\\
	c_2 & = & x_1x_2+x_1x_3+\cdots+x_{n-1}x_n, \\
	&\cdots& \\
	c_n & = & x_1\ldots x_n.
\end{eqnarray*}
The universal spectral cover is a finite flat covering $\fc^\bullet\to\fc$ of degree $n$. 
To construct it we consider the action of the subgroup $\fS_{n-1}$ of $\fS_n$ on $\bA^n$ permuting the coordinates $(x_1,\ldots,x_{n-1})$ and leaving $x_n$ fixed. The categorical quotient $\fc^\bullet=\bA^n\sslash\fS_{n-1}$ is the affine space of coordinates $(c'_1,\ldots,c'_{n-1},x_n)$ with 
$$c'_1=x_1+\cdots+x_{n-1},\ldots,c'_{n-1}= x_1\ldots x_{n-1}.$$ 
The induced morphism $p:\fc^\bullet\to \fc$ is a finite flat morphism of degree $n$. One can represent the finite morphism $\fc^\bullet\to \fc$ in terms of equations by considering the morphism $\iota:\fc^\bullet\to \fc\times \A^1$ given $(c'_1,\ldots,c'_{n-1},x_n) \mapsto (c_1,\ldots,c_n,t)$ with  
\begin{equation} \label{bullet-equation}
	t=x_n, c_1=c'_1+x_n, c_2=c'_2+c'_1 x_n,\ldots, c_n=c'_{n-1}x_n.
\end{equation}
This is a closed embedding that identifies $\fc^\bullet$ with the closed subscheme of $\fc\times\A^1$ defined by the equation $t^n-c_1 t^{n-1}+\cdots + (-1)^n c_n=0$.

We will now generalize this construction to the case $d\geq 2$. For $G=\GL_n$, we have $\ft^d=(\A^d)^n$. The categorical quotient $\ft^d \sslash W$ can be identified with the Chow scheme $\Chow_n(\A^d)=(\A^d)^n\sslash \fS_n$ classifying zero-dimensional cycles of length $n$ of $\A^d$.
We will represent a point of $\Chow_n(\A^d)$ as an unordered collection of $n$ points of $\A^d$
\begin{equation}
	[x_1,\ldots,x_n]\in \Chow_n(\A^d).
\end{equation} 
By 
Theorem \ref{B}, the morphism
\begin{equation}\label{WeylFFT}
	\on{pol}_\rW:\Chow_n(\A^d) \to \prod_{i=1}^n \Sym^i\A^d,\ \ 
	[x_1,...,x_n]\to (c_1,...,c_n)
\end{equation}
where $c_i\in\Sym^i\A^d$ is the $i$-th elementary symmetric polynomial of variables $x_1,\ldots,x_n\in\A^d$,
is a closed embedding. 
We will construct the universal spectral covering of $\Chow_n(\A^d)$ as follows. 
Consider the morphism
\begin{equation}
	\chi_{\A^d}:\Chow_n(\A^d) \times \A^d \to\Sym^n\A^d
\end{equation}
given by 
\begin{equation}
\chi_{\A^d}([x_1,\ldots,x_n],x)= (x-x_1)\ldots(x-x_n)=x^n-c_1 x^{n-1}+\cdots+(-1)^n c_n
\end{equation}
We define the closed subscheme $\on{Cayley}_n(\A^d)$ to be 
\begin{equation}
	\on{Cayley}_n(\A^d)=\chi_{\A^d}^{-1}(\{0\})
\end{equation}
the fiber over $0\in\Sym^n\A^d$. 

\begin{proposition} \label{Cayley}
\begin{enumerate}
\item The projection $p:\on{Cayley}_n(\A^d)\to\Chow_n(\A^d)$ is a finite morphism which is étale over the open subset $\Chow^\circ_n(\A^d)$ of $\Chow_n(X)$ consisting of multiplicity free $0$-cycles.  
\item  For every point $a=[x_1^{n_1},\ldots,x_m^{n_m}]\in \Chow_n(\A^d)$ where $x_1,\ldots,x_m$ are distinct points of $\A^d$, and $n_1,\ldots,n_m$ are positive integers such that $n_1+\cdots+n_m=n$, the fiber of $p:\on{Cayley}_n(\A^d)\to\Chow_n(\A^d)$ over $a$ is the finite subscheme of $\A^d$
	\begin{equation}
		\on{Cayley}_n(a)=\bigsqcup_{i=1}^m \Spec(\cO_{\A^d,x_i}/\fm_{x_i}^{n_i}),
	\end{equation}
where $\cO_{\A^d,x_i}$ is the local ring of $\A^d$ at $x_i$, and $\fm_{x_i}$ its maximal ideal. In particular, as soon as $d\geq 2$ and $n\geq 2$, then the cover $\on{Cayley}_n(\A^d)\to \Chow_n(\A^d)$ is not flat.

\item Let $F$ be a finite $\cO_{\A^d}$-module of length $n$ and let $a\in\Chow_n(\A^d)$ be its spectral datum. Then $F$ is supported by the finite subscheme $\on{Cayley}_n(a)$ of $\A^d$ (This is a generalization of the Cayley-Hamilton theorem).
\end{enumerate}	
\end{proposition}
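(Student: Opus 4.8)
The plan is to deduce all three statements from local computations on $\A^d$, the key structural input being that the graded algebra $\bigoplus_{i\ge0}\Sym^i\A^d=\Sym(V_d^*)$ is a polynomial ring, hence an integral domain. In particular, for $x,x_1,\ldots,x_n\in\A^d$ the product $\prod_j(x-x_j)\in\Sym^n\A^d$ of the degree-one elements $(x-x_j)\in\Sym^1\A^d$ vanishes if and only if some factor vanishes, i.e. $x=x_j$; this already pins down the support of every fiber of $p$.

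For the finiteness in (1) I would isolate, for each coordinate direction $l$, the component of $\chi_{\A^d}([x_1,\ldots,x_n],x)$ along $\partial_l^{\,n}$, where $\partial_1,\ldots,\partial_d$ is a basis of $\Sym^1\A^d$. Since the only way to produce $\partial_l^{\,n}$ is for every factor to contribute its $\partial_l$-component, this component equals $\prod_j(y_l-x_{jl})$, a monic degree-$n$ polynomial in the coordinate $y_l$ whose coefficients are symmetric functions of the $x_j$, hence pulled back from $\Chow_n(\A^d)$. Thus each coordinate is integral of degree $n$ over $\cO(\Chow_n(\A^d))$ on $\on{Cayley}_n(\A^d)$, so $p$ is finite. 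For étaleness over $\Chow^\circ_n(\A^d)$ I would base change along the finite étale $\fS_n$-cover from the distinct-tuple locus $(\A^d)^n_{\neq}$: there $\prod_j(x-x_j)=0$ forces $x=x_j$ for a unique $j$, the complementary factors evaluate to a nonzero element, and so the pullback is the disjoint union of the $n$ graphs $x=x_j$, which is manifestly étale; étaleness then descends.

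The crux, and the step I expect to be the main obstacle, is the scheme-theoretic fiber in (2). Restricting $\chi_{\A^d}$ to $\{a\}\times\A^d$ for $a=[x_1^{n_1},\ldots,x_m^{n_m}]$ gives $x\mapsto\prod_i(x-x_i)^{n_i}$, whose coordinate components generate the fiber ideal $J$; by the domain property the support is $\{x_1,\ldots,x_m\}$, so I localize at $x_1$ and must prove $J=\fm_{x_1}^{n_1}$. Writing $u_1,\ldots,u_d$ for coordinates centered at $x_1$ and expanding $(x-x_1)^{n_1}=\sum_{|\beta|=n_1}\binom{n_1}{\beta}u^\beta\partial^\beta$, the coefficients $u^\beta$ generate exactly $\fm_{x_1}^{n_1}$, while the complementary factor $g=\prod_{i\neq1}(x-x_i)^{n_i}$ has value $w=g(x_1)\neq0$ in $\Sym^{n-n_1}\A^d$; the inclusion $J\subseteq\fm_{x_1}^{n_1}$ is then immediate. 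For the reverse inclusion I would work modulo $\fm_{x_1}^{n_1+1}$, where the components of $(x-x_1)^{n_1}g$ coincide with those of $(x-x_1)^{n_1}w$. Since multiplication $\mu_w\colon\Sym^{n_1}\A^d\to\Sym^n\A^d$ is injective (domain property again), the elements $\{\partial^\beta w\}_{|\beta|=n_1}$ are linearly independent, so the $k$-span of these components is the full space of degree-$n_1$ monomials in the $u_l$. This gives $\fm_{x_1}^{n_1}\subseteq J+\fm_{x_1}^{n_1+1}$, and Nakayama's lemma applied to the finitely generated module $\fm_{x_1}^{n_1}/J$ upgrades it to $J=\fm_{x_1}^{n_1}$. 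Summing the resulting fiber lengths $\binom{d+n_i-1}{d}$ and comparing with $n$ then yields non-flatness once $d\geq2$ and $n\geq2$.

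Finally, (3) is a short length argument. Decomposing $F=\bigoplus_i F_{x_i}$ by support, with $\length(F_{x_i})=n_i$ over $\cO_{\A^d,x_i}$, the descending chain $F_{x_i}\supseteq\fm_{x_i}F_{x_i}\supseteq\fm_{x_i}^2F_{x_i}\supseteq\cdots$ strictly decreases until it stabilizes, each proper inclusion costing length at least one, so it reaches $0$ within $n_i$ steps; Nakayama then gives $\fm_{x_i}^{n_i}F_{x_i}=0$. Hence $F$ is a module over $\bigoplus_i\cO_{\A^d,x_i}/\fm_{x_i}^{n_i}=\cO_{\on{Cayley}_n(a)}$, that is, $F$ is supported on $\on{Cayley}_n(a)$, which is the asserted generalized Cayley--Hamilton statement.
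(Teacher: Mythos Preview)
Your proof is correct and follows essentially the same route as the paper's: both identify generators of the fiber ideal, localize at $x_1$, reduce modulo $\fm_{x_1}^{n_1+1}$, and close with Nakayama; part~(3) is argued identically. The only cosmetic difference is in the linear-algebra step establishing $\fm_{x_1}^{n_1}\subseteq J+\fm_{x_1}^{n_1+1}$: the paper parametrizes generators by linear forms $v\in V_d$ (so the generators are $f_v(a,x)=\prod_j(v(x)-v(x_j))$) and appeals to the fact that the $n_1$-th power map $\fm_{x_1}/\fm_{x_1}^2\to\fm_{x_1}^{n_1}/\fm_{x_1}^{n_1+1}$ has spanning image, whereas you expand in a coordinate basis of $\Sym^n\A^d$ and use the injectivity of multiplication by $w=g(x_1)$ in the symmetric algebra---these are equivalent characteristic-zero facts.
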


\begin{proof}
We will first describe a set of the generators of the ideal defining the closed subscheme $\on{Cayley}_n(\A^d)$
of $\Chow_n(\A^d)\times\A^d$. Let $V_d$ be the space of linear forms on $\A^d$.
Every $v:\A^d\to\A^1$ in $V_d$ induces a map on Chow varieties $[v]:\Chow_n(\A^d)\to \Chow_n(\bA^1)$ mapping $a=[x_1,\ldots,x_n]\in\Chow_n(\A^d)$ to 
$$v(a)=[v(x_1),\ldots,v(x_n)]\in \Chow_n(\bA^1).$$ 
As the diagram 
\begin{equation}
	\begin{tikzcd}
\Chow_n(\A^d) \times \A^d \arrow{r}{\chi_{\A^d}} \arrow{d}[swap]{[v]\times v}
& \Sym^n\A^d \arrow{d}{\Sym^n(v)} \\
\Chow_n(\bA^1) \times \bA^1 \arrow{r}[swap]{\chi_{\bA^1}}
& \Sym^n\bA^1=\bA^1
\end{tikzcd}
\end{equation}
is commutative, the function $f_v=\chi_{\bA^1}\circ ([v]\times v):\Chow_n(\A^d) \times \A^d \to \bA^1$ vanishes on $\on{Cayley}_n(\A^d)$. Explicitly for every $a=[x_1,\ldots,x_n]\in \Chow_n(\A^d)$, we have
\begin{equation} \label{fta}
	f_v(a,x)=(v(x)-v(x_1)) \ldots (v(x)-v(x_n)).
\end{equation}
Moreover, for $\Sym^n(v)$ generates the ideal defining $0$ in $\Sym^n\A^d$ as $v$ varies in $V_d$, the functions $f_v$ generate the ideal defining $\on{Cayley}_n(\A^d)$ inside $\Chow_n(\A^d)\times \A^d$. This provides a convenient set of generators of this ideal albeit infinite and even innumerable as $k$ may be. 

\begin{enumerate}
\item Let $v_1,\ldots,v_d$ be the standard basis of $V_d$ 
whose symmetric algebra $\Sym(V_d)$ is the ring of functions of $\A^d$.
The functions $f_{v_1},\ldots,f_{v_d}$ cut out a closed  subscheme $Z$ of $\Chow_n(\A^d)\times \A^d$ which is finite flat of degree $n^d$ over $\Chow_n(\A^d)$. Since $\on{Cayley}_n(\A^d)$ is a closed subscheme of $Z$, it is also finite over $\Chow_n(\A^d)$. This proves the first assertion of the proposition.

\item We will prove that for $a=[x_1^{n_1},\ldots,x_m^{n_m}]\in \Chow_n(\A^d)$ where $x_1,\ldots,x_m$ are distinct points of $\A^d$, and $n_1,\ldots,n_m$ are positive integers such that $n_1+\cdots+n_m=n$, $\on{Cayley}_n(a)$ is the closed subscheme of $\A^d$ defined by the ideal $\fm_{x_1}^{n_1} \ldots \fm_{x_m}^{n_m}$ of $\Sym(V_d)$ where $\fm_{x_i}$ is the maximal ideal corresponding to the point $x_i\in \A^d$. 

Let us denote $I_a$ the ideal of $\Sym(V_d)$ defining the finite subscheme $\on{Cayley}_n(a)$ in $\A^d$. We first prove that $I=I_{x_1}\ldots I_{x_n}$ where $A/I_{x_i}$ is supported by some finite thickening of the point $x_i$. For this we only need to prove that for every $x\notin\{x_1,\ldots,x_m\}$, there exists a function $f\in I_a$ such that $f\notin\fm_x$. We recall that the ideal $I_a$ is generated by the functions $f_v(a):\A^d\to\bA^1$ as $v$ varies in $V_d$. Choose a linear form $v\in V_d$ a linear form on $\A^d$ such that $v(x)\neq v(x_i)$ for all $i\in\{1,\ldots,m\}$, then we have $f_v(a)(x)\neq 0$ by \eqref{fta}. 

As $x_1,\ldots,x_m$ play equivalent roles, we can focus our attention on $x_1$. It  only remains to prove that the images of the functions $f_v(a)$ in the localization $\Sym(V_d)_{x_1}$ of $\Sym(V_d)$ at $x_1$, as $v$ varies in $V_d$, generate the ideal $\fm_{x_1}^{n_1}$. From \eqref{fta}, we already know that $f_v(a)\in \fm_{x_1}^{n_1}$ for every $v\in V_d$. By the Nakayama lemma, we only need to prove that the images of $f_v(a)$ in $\fm_{x_1}^{n_1}/\fm_{x_1}^{n_1+1}$ generate this vector space as $v$ varies in $V_d$. We observe that for $v\in V_d$ such that $v(x_1)\neq v(x_i)$ for $i\in \{2,\ldots,m\}$, the factors $v(v)-v(x_2),\ldots v(v)-v(v_m)$ are all invertible at $x_1$, it is enough to prove that for $v\in V_d$ satisfying the open condition $v(x_1)\neq v(x_i)$ for $i\in \{2,\ldots,m\}$, the functions $(v(v)-v(x_1))^{n_1}$ generate $\fm_{x_1}^{n_1}/\fm_{x_1}^{n_1+1}$. Here we use again the fact the image of the $n$-th power map $\fm_x/\fm_x^2 \to \fm_x^n/\fm_x^{n+1}$ span $\fm_x^n/\fm_x^{n+1}$ and this conclusion doesn't change even after we remove from $\fm_x/\fm_x^2$ a closed subset of smaller dimension.

\item By the Chinese remainder theorem we are easily reduced to prove that if $F$ is a finite $\Sym(V_d)$-module of length $n$, supported by a finite thickening of $x\in \A^d$ then $F$ is annihilated by $\fm_x^n$. Since $F$ is  supported by a finite thickening of $x\in\A^d$ it has a structure of $\Sym(V_d)_x$-module where $\Sym(V_d)_x$ is the localization of $\Sym(V_d)$ at $x$. We consider the decreasing filtration $F \supset \fm_x F \supset \fm_x^2 F \supset \cdots$.  By the Nakayama lemma, we know that for $m\in\N$, $\fm_x^m E/\fm_x^{m+1}E=0$ implies $\fm_x^m F=0$. It follows that as long as $\fm_x^m F\neq 0$, we have $\dim_k(\fm_x^i F/\fm_x^{i+1}F)\geq 1$ for all $i\in \{0,\ldots,m\}$ and it follows that $m+1\leq n$. We conclude that $\fm_x^n F=0$. 
\end{enumerate}
This completes the proof of Proposition \ref{Cayley}
\end{proof}

There is another construction possibly giving rise to a slightly different spectral cover of $\Chow_n(\A^d)$. We consider the action of $\fS_{n-1}$ on $(\A^d)^n$  permuting $(x_1,\ldots,x_{n-1})$ and leaving $x_n$ fixed. The categorical quotient $(\A^d)^n \sslash \fS_{n-1}$ is a normal scheme equipped with a morphism $(\A^d)^n \sslash \fS_{n-1} \to (\A^d)^n \sslash \fS_{n}$ which is finite and generically finite étale of degree $n$. We also have a morphism \[\iota: (\A^d)^n \sslash \fS_{n-1}=\Chow_{n-1}(\A^d)\times\A^d \to \Chow_n(\A^d) \times \bA^d\] given by 
$([x_1,\ldots,x_{n-1}],x_n) \mapsto ([x_1,\ldots,x_n],x_n)$.


\begin{proposition}
	 The morphism $\iota:(\A^d)^n \sslash \fS_{n-1} \to \Chow_n(\A^d)\times \A^d$ is a closed embedding. It factors through a universal homeomorphism 
	 \begin{equation} \label{spectral-homeo}
	 	(\A^d)^n \sslash \fS_{n-1} \to \on{Cayley}_n(\A^d)
	 \end{equation}
	which is an isomorphism over $\Chow^\circ_n(\A^d)$.
\end{proposition}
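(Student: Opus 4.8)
The plan is to prove the three assertions in turn, the crux being a surjectivity statement on rings of functions that promotes $\iota$ to a closed embedding. Write $R=\Sym(V_d)=k[\A^d]$, so that the source has coordinate ring $(R^{\otimes(n-1)})^{\fS_{n-1}}\otimes_k R=(R^{\otimes n})^{\fS_{n-1}}$, the last tensor factor being the $\A^d$ direction on which $\fS_{n-1}$ acts trivially, while $\Chow_n(\A^d)\times\A^d$ has coordinate ring $(R^{\otimes n})^{\fS_n}\otimes_k R$. Under these identifications $\iota^*$ sends $(R^{\otimes n})^{\fS_n}$ to its image inside $(R^{\otimes n})^{\fS_{n-1}}$ and maps the extra factor $R$ identically onto the $n$-th copy $R_{(n)}$. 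First I would show $\iota^*$ is surjective. For each multi-index $\alpha$ let $p_\alpha=\sum_{i=1}^n x_i^\alpha\in(R^{\otimes n})^{\fS_n}$ be the corresponding polarized power sum; since $x_n^\alpha\in R_{(n)}$ also lies in the image, so does $p_\alpha-x_n^\alpha=\sum_{i=1}^{n-1}x_i^\alpha$, the power sum on the first $n-1$ points. As $k$ has characteristic zero, these power sums generate $(R^{\otimes(n-1)})^{\fS_{n-1}}$ by the classical theory of multisymmetric functions (cf. the first fundamental theorem underlying Theorem \ref{B}), and together with $R_{(n)}$ they generate all of $(R^{\otimes n})^{\fS_{n-1}}$. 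Hence $\iota^*$ is surjective and $\iota$ is a closed embedding.

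Next I would check that $\iota$ lands in $\on{Cayley}_n(\A^d)$. Using the generators $f_v$ of the ideal of $\on{Cayley}_n(\A^d)$ from the proof of Proposition \ref{Cayley}, the formula \eqref{fta} gives $\iota^* f_v=(v(x_n)-v(x_1))\cdots(v(x_n)-v(x_n))=0$, the last factor vanishing identically; thus every $f_v$ pulls back to zero and $\iota$ factors through the closed subscheme $\on{Cayley}_n(\A^d)$. Because $\iota$ is a closed embedding factoring through a closed subscheme, the induced map $\bar\iota\colon(\A^d)^n\sslash\fS_{n-1}\to\on{Cayley}_n(\A^d)$ is again a closed embedding. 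It remains to see $\bar\iota$ is surjective. On $k$-points, $\on{Cayley}_n(\A^d)$ consists of pairs $(a,x)$ with $a\in\Chow_n(\A^d)$ and $x$ a point of the support of $a$ (Proposition \ref{Cayley}(2)), while $\bar\iota$ sends $([x_1,\ldots,x_{n-1}],x_n)$ to $([x_1,\ldots,x_{n-1},x_n],x_n)$; given $(a,x)$ one recovers the unique preimage by deleting one copy of $x$ from $a$, so $\bar\iota$ is bijective on $k$-points. As both schemes are of finite type over the algebraically closed field $k$, hence Jacobson, the closed image of $\bar\iota$ meets every closed point and is therefore everything, so $\bar\iota$ is a surjective closed embedding, i.e. its defining ideal is contained in the nilradical of $\cO_{\on{Cayley}_n(\A^d)}$. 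Such a thickening is integral and universally injective, hence a universal homeomorphism.

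Finally, over $\Chow^\circ_n(\A^d)$ the cover $\on{Cayley}_n(\A^d)\to\Chow_n(\A^d)$ is finite étale by Proposition \ref{Cayley}(1); since $\Chow^\circ_n(\A^d)$ is smooth, being the free quotient of the distinct-point locus of $(\A^d)^n$ by $\fS_n$, the restriction of $\on{Cayley}_n(\A^d)$ there is reduced. A surjective closed embedding into a reduced scheme has trivial defining ideal, so $\bar\iota$ is an isomorphism over $\Chow^\circ_n(\A^d)$. I expect the one genuinely substantive step to be the surjectivity of $\iota^*$: everything afterward is formal once it is in place, but that step rests on the characteristic-zero generation of the multisymmetric functions by power sums, which is exactly where the standing hypothesis on $k$ enters and which would fail in small positive characteristic.
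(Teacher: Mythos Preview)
Your proof is correct; the overall strategy is close to the paper's, but the packaging differs in two places worth noting.

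For the closed embedding, the paper argues via a commutative square
\[
\xymatrix{
(\A^d)^n \sslash \fS_{n-1}=\Chow_{n-1}(\A^d)\times\A^d \ar[r]^{\qquad\qquad\iota} \ar[d] & \Chow_n(\A^d)\times\A^d \ar[d]\\
\prod_{i=1}^{n-1}\Sym^i\A^d\times\A^d \ar[r] & \prod_{i=1}^n\Sym^i\A^d\times\A^d
}
\]
whose vertical arrows are the closed embeddings $\pol_W$ supplied by Theorem~\ref{B} (Weyl's first fundamental theorem for $\fS_{n-1}$ and $\fS_n$) and whose bottom arrow is the explicit closed embedding from the formulas \eqref{bullet-equation}. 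You instead work directly at the level of invariant rings, manufacturing the $\fS_{n-1}$ power sums on the first $n-1$ points as $p_\alpha-x_n^\alpha$ and then invoking generation by power sums. These are two presentations of the same surjectivity: the paper's diagram hides the computation inside Theorem~\ref{B}, while your argument makes the characteristic-zero input (power-sum generation of multisymmetric functions) completely explicit.

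For the factorization through $\on{Cayley}_n(\A^d)$ and the isomorphism over $\Chow^\circ_n(\A^d)$, the paper reverses your order: it first proves the isomorphism over $\Chow^\circ_n(\A^d)$ by comparing étale degrees, and then uses integrality of $(\A^d)^n\sslash\fS_{n-1}$ to extend the vanishing of the defining equation from the dense open to the whole. Your route—pulling back the generators $f_v$ and observing that the factor $v(x_n)-v(x_n)$ kills everything—is more direct and does not require the open-locus statement as input. Likewise, for the universal homeomorphism the paper checks finiteness plus bijectivity on $k$-points, whereas you exploit that $\bar\iota$ is already a closed embedding, so surjectivity on closed points suffices; this is a mild simplification.
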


\begin{proof}
We have the following commutative diagram
\[\xymatrix{(\A^d)^n \sslash \fS_{n-1}=\Chow_{n-1}(\A^d) \times \bA^d\ar[d]\ar[r]^{\ \ \ \ \ \ \ \ \ \ \ \ \ \ \iota}&\Chow_n(\A^d)\times \A^d\ar[d]\\
\prod_{i=1}^{n-1} \Sym^i\A^d \times \A^d\ar[r]&\prod_{i=1}^n \Sym^i\A^d \times \A^d}\]
where the vertical arrows are the closed embeddings induced from~\eqref{WeylFFT}
and the lower horizontal arrow is the closed embedding 	
sending 
$(c_1',...,c_{n-1}',x_n)$ to $(c_1,...,c_{n},x_n)$
where $c_1,...,c_n$ are 
given by the equation \eqref{bullet-equation}.
It follows that $\iota$ is a closed embedding.

	
	Let $\Chow^\circ_n(\A^d)$ denote the open subscheme of $\Chow_n(\A^d)$ consisting of multiplicity free zero-cycles.  Let us denote $(\bA^d)^{n,\circ}$ the preimage of $B^\circ$ which is the complement in $(\bA^d)^n$ of all diagonals. The morphism $(\bA^d)^{n,\circ}\to \Chow^\circ_n(\A^d)$ is finite, étale and Galois of Galois group $\fS_n$. The morphism $(\bA^d)^{n,\circ}\to (\bA^d)^{n,\circ}\sslash\fS_{n-1}$ is finite, étale, Galois morphism with Galois group $\fS_{n-1}$. It follows that the morphism $(\bA^d)^{n,\circ}\sslash\fS_{n-1}\to \Chow^\circ_n(\A^d)$ is finite, étale of degree $|\fS_n|/|\fS_{n-1}|=n$. 
	
	Over $\Chow^\circ_n(\A^d)$, the morphism $\iota:(\A^d)^{n,\circ} \sslash \fS_{n-1} \to B^\circ\times \A^d$ clearly induces an isomorphism of $(\A^d)^{n,\circ}\sslash \fS_{n-1}$ on $\on{Cayley}_n^\circ(\A^d)$ which is the preimage of $\Chow^\circ_n(\A^d)$ in $\on{Cayley}_n(\A^d)$. Since $(\A^d)^n \sslash \fS_{n-1}$ is an integral scheme, the function $x^n-c_1 x^{n-1}+\cdots+(-1)^n c_n$ which vanishes over $(\A^d)^{n,\circ} \sslash \fS_{n-1}$ has to vanish on all $(\A^d)^n \sslash \fS_{n-1}$. It follows that the morphism $\iota$ factors through a morphism $(\A^d)^n \sslash \fS_{n-1} \to \on{Cayley}_n(\A^d)$. This morphism is finite since $(\A^d)^n \sslash \fS_{n-1}$ is finite over $\Chow_n(\A^d)$. One can check directly that the finite morphism $(\A^d)^n \sslash \fS_{n-1} \to \on{Cayley}_n(\A^d)$ induces a bijection over the $k$-points, which implies that it is a universal homeomorphism.
	\end{proof}

\begin{remark}
Drinfeld asked the question whether the morphism \eqref{spectral-homeo} is an isomorphism, as in the case $d=1$. This is equivalent to saying that $\on{Cayley}_n(\A^d)$ is reduced and normal.
\end{remark}

Recall that in the case $G=\GL_n$, the closed subscheme $B$ of $A$ constructed in Theorem \ref{B} is $B=\Chow_n(\A^d)$. As the universal spectral cover on $B$, we will take 
$$B^\bullet=\on{Cayley}_n(\A^d)$$ 
instead of $(\A^d)^n \sslash \fS_{n-1}$. The reason is that, in Proposition \ref{Cayley}, we have a nice description of the fibers of $B^\bullet$ over $B$, and a generalization of the Cayley-Hamilton theorem.

For every geometric point $b\in \cB_X(k)$, we have a morphism $b:X\to [B/\GL_d]$ lying over the morphism $\tau_X^*:X\to \mathbb{B}\GL_d$ corresponding to the cotangent bundle $T_X^*$. By forming the Cartesian product
 \begin{equation}\label{p_b}
	\begin{tikzcd}
X^\bullet_b \arrow{d}{p_b} \arrow{r}
& {[B^\bullet/\GL_d]} \arrow{d} \\
X \arrow{r}{b}
& {[B/\GL_d]}
\end{tikzcd}
\end{equation}
we obtain the spectral cover $X^\bullet_b$ of $X$ corresponding to $b$. Since $B^\bullet \to B$ is a finite morphism, the map $p_b:X^\bullet_B\to X$ is a finite covering. If $b\in \cB_X^\heartsuit$, i.e., $b(X)$ has non-empty intersection with $[B^\circ/\GL_d]$, the covering $p_b:X_b^\bullet \to X$ is generically finite étale of degree $n$.

If $X$ is a curve, and if the spectral curve $X_b^\bullet$ is integral, after Beauville-Narasimhan-Ramanan \cite{BNR}, there is an equivalence of categories between the category of Higgs bundles with spectral datum $b$ and the category of torsion-free $\cO_{X_b^\bullet}$ of generic rank 1. This equivalence can be generalized to the case $d\geq 1$ with the concept of Cohen-Macaulay sheaves. 

Let $M$ be a coherent sheaf on a finite type scheme $Y$.
Let $d=\codim(\Supp(M))$.
Recall that $M$ is called \emph{Cohen-Macaulay} of codimension $d$ if 
$\rH^i(\DD(M))=0$ for $i\neq d$.
A Cohen-Macaulay sheaf $M$ is called \emph{maximal} if it has codimension zero.

We also recall an important fact about Cohen-Macaulay modules. 
Suppose that $R$ is a finite $A$-algebra of degree $n$ with $A$ being a regular ring of pure dimension $m$. 
Let $M$ be a $R$-module of finite type. 
Then $M$ is a locally free $A$-module of rank $n$ if and only if 
$M$ is maximal Cohen-Macaulay of generic rank one. 
We refer to \cite[Section 2]{BBG} for a nice discussion on Cohen-Macaulay modules and for further references therein, or the comprehensive treatment in \cite{BH}.

\begin{proposition}
	For every $b\in\cB^\heartsuit_X(k)$, the fiber $h_X^{-1}(b)$ of the Hitchin morphism is isomorphic to the stack of maximal Cohen-Macaulay sheaves of generic rank one on the spectral cover $X^\bullet_b$.
\end{proposition}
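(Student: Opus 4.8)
The plan is to prove a higher-dimensional Beauville--Narasimhan--Ramanan correspondence: I will match a Higgs bundle with characteristic datum $b$ against a sheaf on the spectral cover $X^\bullet_b$, and then detect the vector bundle condition through the Cohen--Macaulay criterion recalled just above the proposition.

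First I would reinterpret the integrability condition as a module structure over the cotangent total space. For $G=\GL_n$ a point of $h_X^{-1}(b)$ is a rank $n$ vector bundle $E$ together with an $\cO_X$-linear map $\theta:\cT_X\to\End(E)$ with pairwise commuting image. Commutativity is exactly the condition that $v\mapsto\theta(v)$ extends to a homomorphism of $\cO_X$-algebras $\Sym^\bullet\cT_X\to\End(E)$, i.e. that $E$ becomes a coherent sheaf $\tilde E$ on $T^*_X=\Spec_X(\Sym^\bullet\cT_X)$ with $\pi_*\tilde E=E$, where $\pi:T^*_X\to X$. This assignment is functorial and compatible with base change, and it is fully faithful since $(E,\theta)$ is recovered from $\tilde E$; so it already defines a fully faithful morphism from $h_X^{-1}(b)$ to a stack of coherent sheaves on $T^*_X$.

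Second I would localize the support. The datum $b=h_X(\theta)$ is the spectral datum of $\theta$, and $X^\bullet_b$ is cut out inside $T^*_X$ by the $\GL_d$-twist of the equations defining $\on{Cayley}_n(\A^d)$ (diagram \eqref{p_b}). The generalized Cayley--Hamilton theorem, Proposition \ref{Cayley}(3), applied relatively over $X$, shows that $\tilde E$ is annihilated by the ideal cutting out $X^\bullet_b$; hence $\tilde E$ is scheme-theoretically an $\cO_{X^\bullet_b}$-module. Finally I would match the rank conditions: writing $A=\cO_X$, which is regular of pure dimension $d$, and $R=p_{b*}\cO_{X^\bullet_b}$, which has generic degree $n$ because $b\in\cB^\heartsuit_X(k)$ makes $p_b$ generically finite étale of degree $n$, the Cohen--Macaulay criterion says that the $R$-module $\tilde E$ is maximal Cohen--Macaulay of generic rank one if and only if $p_{b*}\tilde E=E$ is locally free of rank $n$ over $\cO_X$. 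Since this is a biconditional, it simultaneously shows that every $\tilde E$ coming from a Higgs bundle is maximal Cohen--Macaulay of generic rank one and that, conversely, every such sheaf pushes forward to a vector bundle of rank $n$ whose $\Sym^\bullet\cT_X$-action is an integrable Higgs field of characteristic $b$. Carrying the equivalence through $S$-families upgrades it to the asserted isomorphism of stacks.

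The hard part will be the relative form of the support statement. Proposition \ref{Cayley}(3) is stated for a single finite module over $\cO_{\A^d}$, so I must verify that it globalizes over the base $X$ and through the $\GL_d$-twist, uniformly in families, so that $\tilde E$ lands on $X^\bullet_b$ itself and not on a larger thickening. A related subtlety, which is exactly why the statement is phrased through Cohen--Macaulayness rather than a naive torsion-free count, is that $p_b:X^\bullet_b\to X$ is not flat and $R$ is not locally free over $\cO_X$; one must also check that generic rank one is the correct normalization, which relies on the $\heartsuit$-hypothesis guaranteeing generic étaleness.
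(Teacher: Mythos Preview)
Your proposal is correct and follows essentially the same route as the paper: convert $(E,\theta)$ into a coherent sheaf on $T^*_X$ via the $\Sym^\bullet\cT_X$-module structure, invoke the generalized Cayley--Hamilton theorem (Proposition~\ref{Cayley}(3)) to force the support onto $X^\bullet_b$, and then run the Cohen--Macaulay criterion in both directions to match rank-$n$ vector bundles on $X$ with maximal Cohen--Macaulay sheaves of generic rank one on $X^\bullet_b$. The ``hard part'' you flag is not a genuine obstacle here: $b$ is a fixed $k$-point and $X$ is reduced, so the support of $\tilde E$ can be checked fiberwise over closed points of $X$, and the $\GL_d$-twist is harmless because the defining equations $f_v$ of $\on{Cayley}_n(\A^d)$ are manifestly $\GL_d$-equivariant.
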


\begin{proof}
	Let $(E,\theta)\in h_X^{-1}(b)$ a Higgs bundle of rank $n$ whose spectral datum is $b\in\cB^\heartsuit_X(k)$. Then $E=p_* F$ where $p:T^*X\to X$ is the projection map and $F$ is a coherent sheaf on the cotangent $T^*_X$. By the Cayley-Hamilton theorem, see Proposition \ref{Cayley}, $F$ is supported by the spectral cover $X^\bullet_b \subset T^*_X$. We have then $E=p_{b*} F$ where $p_b:X^\bullet_b\to X$ is the map in ~\eqref{p_b} and $F$ is a coherent sheaf on $X^\bullet_b$. Since $p_b:X^\bullet_b\to X$ is a finite morphism, and $E$ is a vector bundle over $X$, $F$ is a maximal Cohen-Macaulay sheaf. Moreover, since $p_b$ is generically finite étale of degree $n$, $F$ has generic rank one. Conversely, if $F$ is a maximal Cohen-Macaulay sheaf of generic rank one over $X^\bullet_b$, then $E=p_{b*}F$ is a vector bundle of rank $n$ over $X$. It is naturally equipped with a Higgs field $\theta:E \otimes_{\cO_X}\cT_X \to E$ as $X^\bullet_b$ is a closed subscheme of $T^*_X$.
\end{proof}

In spite of the simplicity of the description of $h_X^{-1}(b)$, the proposition above is 
not of great use. For instance, it doesn't imply that $h_X^{-1}(b)$ is non-empty. The difficulty is that in general the spectral cover $X^\bullet_b$  itself might not be Cohen-Macaulay, 
equivalently, the map $X^\bullet_b\to X$ might not be flat, therefore it is not clear how to construct coherent Cohen-Macaulay sheaves on $X^\bullet_b$. At this point, we see that in order to obtain a useful description of $h_X^{-1}(b)$, we need to construct a finite Cohen-Macaulayfication of $X^\bullet_b$. This can be done in the case of surfaces.

\section{Cohen-Macaulay spectral surfaces}

In the case of surfaces, for every $b\in\cB_X^\heartsuit(k)$, the spectral surface $X^\bullet_b$ admits a canonical finite Cohen-Macaulayfication whose construction relies on the theory of Hilbert schemes of points on surfaces and Serre's theorem on extending vector bundles on smooth surfaces across closed subschemes of codimension two. We will first recall Serre's theorem on extending locally free sheaves across a closed subscheme of codimension 2, see \cite[Proposition 7]{Serre}.

\begin{theorem}\label{Serre extension}
	Let $X$ be a smooth surface over $k$, $Z$ a closed subscheme of codimension 2 of $X$ and $j:U\to X$ the open immersion of the complement $U$ of $Z$ in $X$. Then the functor $V\to j_*V$ is an equivalence of categories between the category of locally free sheaves on $U$ and locally free sheaves on $X$. Its inverse is the functor $j^*$. 
\end{theorem}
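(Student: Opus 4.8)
The plan is to show that $j_*$ and $j^*$ are mutually inverse by examining the unit and counit of the adjunction $(j^*,j_*)$ and, separately, that $j_*$ preserves local freeness. Restriction $j^*$ obviously carries a locally free sheaf on $X$ to a locally free sheaf on $U$, and since $j$ is an open immersion the counit $j^*j_*V \to V$ is an isomorphism for every sheaf $V$ on $U$; this already yields $j^*\circ j_* \simeq \id$. It therefore remains to establish two things: (i) for a locally free sheaf $W$ on $X$ the unit $W \to j_*j^*W$ is an isomorphism, and (ii) for a locally free sheaf $V$ on $U$ the sheaf $j_*V$ is coherent and locally free on $X$.

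Both reduce to a depth estimate. Since $Z$ has codimension $2$ in the smooth surface $X$, each point $x$ of $Z$ satisfies $\dim\cO_{X,x}\geq 2$, and because $X$ is regular (hence Cohen-Macaulay) a locally free sheaf $\cF$ has $\depth_x\cF=\depth\cO_{X,x}=\dim\cO_{X,x}\geq 2$ at such points. Reading off the exact sequence of local-cohomology sheaves
\[
0 \to \cH^0_Z(\cF) \to \cF \to j_*j^*\cF \to \cH^1_Z(\cF) \to 0,
\]
the bound $\depth_x\cF\geq 2$ for all $x\in Z$ forces $\cH^0_Z(\cF)=\cH^1_Z(\cF)=0$ by Grothendieck's comparison of depth with local cohomology, so the unit $\cF\to j_*j^*\cF$ is an isomorphism. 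Taking $\cF=W$ settles (i).

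For (ii) I would realize $j_*V$ as a reflexive sheaf and then invoke regularity. First extend $V$ to a coherent sheaf $\cG_0$ on $X$ with $\cG_0|_U\simeq V$, which is possible because $X$ is Noetherian and $U$ is open, and set $\cG:=\cG_0^{\vee\vee}$, a coherent reflexive sheaf. As $V$ is locally free, hence reflexive, we get $\cG|_U\simeq(\cG_0|_U)^{\vee\vee}\simeq V$. On a regular scheme of dimension at most $2$ every reflexive coherent sheaf is locally free: at a point $x\in Z$ this is the classical fact that a reflexive module over the two-dimensional regular local ring $\cO_{X,x}$ is free (reflexivity gives $\depth\geq 2=\dim$, so $\pd=0$ by Auslander-Buchsbaum). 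Thus $\cG$ is locally free, and applying the isomorphism of the previous paragraph to $\cG$ gives $\cG\simeq j_*j^*\cG=j_*(\cG|_U)\simeq j_*V$, so $j_*V$ is locally free, proving (ii).

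Combining (i), (ii) and the counit isomorphism shows that $j_*$ lands in locally free sheaves on $X$, that $j^*\circ j_*\simeq\id$ and $j_*\circ j^*\simeq\id$ on the respective categories, and hence that $j_*$ and $j^*$ are inverse equivalences. The hard part will be the local-algebra input behind (ii): both the identification of the failure of $W\to j_*j^*W$ with the sheaves $\cH^i_Z$ and, above all, the implication that reflexivity entails local freeness over a two-dimensional regular local ring. This is exactly where the hypotheses \emph{smooth surface} and \emph{codimension two} are indispensable, since the reflexive-implies-locally-free step already fails over regular local rings of dimension $3$; this is also why, in higher dimension, the construction of a finite Cohen-Macaulayfication cannot rely on Serre extension alone.
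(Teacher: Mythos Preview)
Your argument is correct and is the standard modern proof of this classical result: the depth/local-cohomology computation for the unit map, together with the reflexive-hull trick and Auslander--Buchsbaum over a two-dimensional regular local ring, are exactly the ingredients one expects. One small remark: since $Z$ has codimension two in a surface, it is zero-dimensional, so the identification of the stalks of $\cH^i_Z(\cF)$ with local cohomology at the maximal ideal is immediate; you might make this explicit, as it is where the surface hypothesis first enters.

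As for comparison with the paper: the paper does not prove this theorem at all. It is quoted as a known result with a reference to \cite[Proposition 7]{Serre} and used as a black box in the construction of the Cohen--Macaulay spectral surface. So there is nothing to compare against; you have simply supplied a complete proof where the paper chose to cite one. Your closing comment about why the argument is confined to dimension two is apt and matches the paper's own remark that the Cohen--Macaulayfication construction is special to surfaces.
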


As we are now considering the case $G=\GL_n$ and $d=2$, the subscheme $B$ of $A=\A^2 \times\Sym^2\A^2 \times \cdots\times\Sym^n \A^2$ is canonically isomorphic to the Chow scheme $\Chow_n(\A^2)$ of zero-cycles of length $n$ on $\A^2$. We recall that a point $b\in \cB_X$ is a section $b:X\to [\Chow_n(\A^2)/\GL_2]$ lying over $\tau_X^*:X\to \mathbb{B}\GL_2$ representing the cotangent bundle $T^*_X$. In other words, $b$ is a section of the relative Chow scheme 
$$\Chow_n(T^*_X/X) \to X$$
obtained from $\Chow_n(\A^2)$ by twisting it by the $\GL_2$-torsor attached to the cotangent bundle $T^*_X$ of $X$. 

Recall the open locus $\Chow_n^\circ(\A^2)$ of $\Chow_n(\A^2)$ consisting of multiplicity free zero-cycles, and $Q$ its complement. Let $\Chow_n^\circ(T^*_X/X)$ the corresponding open locus in $\Chow_n(T^*_X/X)$, and $Q(T^*_X/X)$ its complement. Recall the open locus $\cB^\heartsuit_X$ in $\cB_X$ 
consisting of maps $b: X\to [\Chow_n(\A^2)/\GL_2]$ which maps the generic point of $X$ to the open locus $[\Chow^\circ_n(T^*_X/X)/\GL_2]$. In other words
\begin{equation} \label{A-heart}
	\cB^\heartsuit_X=\{b\in\cB_X| \dim b^{-1}(Q(T^*_X/X))\leq 1\}.
\end{equation}

We first recall some well-known facts about the Hilbert schemes of $0$-dimensional subschemes of a surface, see, for example, \cite{N}. Let $\Hilb_n(\bA^2)$ denote the moduli space of zero-dimensional subschemes of length $n$ of $\bA^2$. A point of $\Hilb_n(\bA^2)$ is a $0$-dimensional subscheme $Z$ of $\bA^2$ of length $n$ that will be of the form $Z=\bigsqcup_{\alpha \in\bA^2} Z_\alpha$ where $Z_\alpha$ is a local $0$-dimensional subscheme of $\bA^2$ whose closed point is $\alpha$. It is known that the Hilbert-Chow morphism 
\begin{equation} \label{Hilbert-Chow}
{\rm HC}_n:\Hilb_n(\bA^2) \to \Chow_n(\bA^2).
\end{equation}
given by $Z\mapsto \sum_{\alpha\in\bA^2} \length(Z_\alpha)\alpha$, where $\length(Z_\alpha)$ is the length of $Z_\alpha$, is a resolution of singularities of $\Chow_n(\A^2)$. It is clear that ${\rm HC}_n$ is an isomorphism over $\Chow^\circ_n(\A^2)$. 

As the morphism \eqref{Hilbert-Chow} is $\GL_2$-equivariant, we can twist it by any $\GL_2$-bundle, and in particular by the $\GL_2$-bundle associated to the cotangent bundle $T^*_X$ over a smooth surface $X$ and by doing so we obtain
\begin{equation}\label{HC-Omega}
	{\rm HC}_{T^*_X/X}:\Hilb_n(T^*_X/X)\to \Chow_n(T^*_X/X).
\end{equation}
This morphism is a proper morphism and its base change to the open subset $\Chow^\circ_n(T^*_X/X)$ is an isomorphism. 

\begin{proposition} \label{CM-fication}
	For every $b\in \cB^\heartsuit_X(k)$, there exists a unique finite flat covering 
	\begin{equation}\label{CM surfaces}
		p^{\rm CM}_b:X^{\rm CM}_b\to X
	\end{equation}
	 of degree $n$, equipped with a $X$-morphism $\iota:X^{\rm CM}_b \to T^*_X$ satisfying the following property: there exists an open subset $U\subset X$, whose complement is a closed subset of codimension at least 2, such that $\iota$ is a closed embedding over $U$ and for every $x\in U$, the fiber $(p^{\rm CM}_b)^{-1}(x)$ is a point of $\Hilb_n(T^*_X/X)$ lying over the point $b(x)\in \Chow_n(T^*_X/X)$. Moreover, the morphism $\iota: X^{\rm CM}_b\to T^*_X$ factors through the closed subscheme $X^\bullet_b$ of $T^*_X$ and the resulting morphism $q_b^{\rm CM}:X^{\rm CM}_b\to X^\bullet_b$ is a finite Cohen-Macaulayfication of $X^\bullet_b$.
\end{proposition}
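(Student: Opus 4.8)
The plan is to use the relative Hilbert scheme $\Hilb_n(T^*_X/X)$ as a flat substitute for the non-flat spectral surface $X^\bullet_b$, and to propagate the resulting flat family across the codimension-two locus where $b$ fails to be multiplicity free by means of Serre's extension theorem.

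First I would lift the section $b:X\to\Chow_n(T^*_X/X)$ along the Hilbert--Chow morphism \eqref{HC-Omega}. Over the open locus $X^\circ:=X\setminus b^{-1}(Q(T^*_X/X))$ the morphism $\HC_{T^*_X/X}$ is an isomorphism, so $b$ lifts uniquely to a section $\tilde b^\circ:X^\circ\to\Hilb_n(T^*_X/X)$. By the hypothesis $b\in\cB^\heartsuit_X$ in \eqref{A-heart}, the complement $X\setminus X^\circ$ has dimension at most one, so the generic point of $X$ lies in $X^\circ$ and $\tilde b^\circ$ is a rational lift of $b$. At each codimension-one point $\eta$ of $X$ the local ring $\cO_{X,\eta}$ is a discrete valuation ring because $X$ is smooth, so the valuative criterion applied to the proper morphism $\HC_{T^*_X/X}$ extends $\tilde b^\circ$ over $\Spec\cO_{X,\eta}$. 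By the standard extension theorem for rational maps from a normal variety along a morphism proper over the base, these extensions glue to a lift $\tilde b:U\to\Hilb_n(T^*_X/X)$ on an open set $U$ whose complement has codimension at least two. Being a section of the relative Hilbert scheme, $\tilde b$ amounts to a closed subscheme $\cZ\subset T^*_X|_U$ finite and flat of degree $n$ over $U$.

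Next I would extend the flat family from $U$ to all of $X$. Let $p:T^*_X\to X$ be the projection and set $\cB_\cZ:=p_*\cO_\cZ$, a sheaf of commutative $\cO_U$-algebras locally free of rank $n$. Since $X$ is a smooth surface and $\codim_X(X\setminus U)\geq 2$, Serre's Theorem \ref{Serre extension} yields a locally free extension $\widetilde\cB:=j_*\cB_\cZ$ of rank $n$ on $X$, with $j:U\to X$ the inclusion and $j^*\widetilde\cB=\cB_\cZ$. Because $j_*$ is an equivalence on locally free sheaves, the multiplication and unit of $\cB_\cZ$, as well as the surjection $\Sym_{\cO_X}\cT_X\to\cB_\cZ$ coming from $\cZ\hookrightarrow T^*_X|_U$, extend uniquely; the algebra axioms hold over the dense open $U$, hence over $X$. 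I then define $X^{\rm CM}_b:=\Spec_X\widetilde\cB$, finite and flat of degree $n$ over $X$, with $\iota:X^{\rm CM}_b\to T^*_X$ induced by $\Sym_{\cO_X}\cT_X\to\widetilde\cB$. Over $U$ this algebra map is surjective, so $\iota$ is a closed embedding there, and the fibre of $p^{\rm CM}_b$ over $x\in U$ is the Hilbert point $\tilde b(x)$ lying over $b(x)$. Finally, because $X^{\rm CM}_b$ is finite flat over the regular surface $X$, it is Cohen--Macaulay.

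It then remains to factor $\iota$ through $X^\bullet_b$ and to recognize the Cohen--Macaulayfication. Over $U$ the sheaf $\cO_\cZ$ is a length-$n$ module with spectral datum $b$, so by the generalized Cayley--Hamilton theorem, Proposition \ref{Cayley}(3), it is annihilated by the ideal defining $X^\bullet_b$; thus $\cZ\subset X^\bullet_b|_U$ and the algebra map factors as $\cO_{X^\bullet_b}|_U\twoheadrightarrow\cB_\cZ$. Via the adjunction $\Hom_{\cO_X}(\cO_{X^\bullet_b},\widetilde\cB)=\Hom_{\cO_U}(\cO_{X^\bullet_b}|_U,\cB_\cZ)$, valid because $\widetilde\cB=j_*j^*\widetilde\cB$, this factorization extends to $\cO_{X^\bullet_b}\to\widetilde\cB$, giving $q_b^{\rm CM}:X^{\rm CM}_b\to X^\bullet_b$ over $X$. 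This map is finite, and over the dense open $X^\circ$ both surfaces reduce to the same finite étale degree-$n$ cover, so $q_b^{\rm CM}$ is an isomorphism there; with the Cohen--Macaulayness of $X^{\rm CM}_b$ this exhibits $q_b^{\rm CM}$ as a finite Cohen--Macaulayfication. Uniqueness is then formal: any covering with the stated property restricts over its defining open set to the flat family determined by $\tilde b$, and two such families, agreeing on an open set with codimension-two complement, have the same Serre extension. I expect the extension step to be the main obstacle, since one must check that the algebra structure, the embedding into $T^*_X$, and the factorization through $X^\bullet_b$ all propagate compatibly across codimension two; here Serre's theorem and the fibrewise description in Proposition \ref{Cayley} must be used in concert, whereas the lifting step is comparatively routine once properness of $\HC_{T^*_X/X}$ and smoothness of $X$ are in hand.
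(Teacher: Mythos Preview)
Your proposal is correct and follows essentially the same strategy as the paper: lift $b$ along the proper Hilbert--Chow morphism to an open $U$ with codimension-two complement, use Serre's extension theorem to propagate the resulting finite flat algebra to all of $X$, and invoke the generalized Cayley--Hamilton theorem for the factorization through $X^\bullet_b$. The only minor difference is that the paper applies Cayley--Hamilton directly to the global sheaf $p^{\rm CM}_{b*}\cO_{X^{\rm CM}_b}$ (using that its spectral datum agrees with $b$ on a dense open and hence everywhere), whereas you first factor over $U$ and then extend via the adjunction $\widetilde\cB=j_*j^*\widetilde\cB$; both arguments are valid and equivalent in substance.
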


\begin{proof}
	Let $U^\circ$ be the preimage of $\Chow^\circ_n(T^*_X/X)$ by the section $b:X\to \Chow_n(T^*_X/X)$. By assumption $b\in \cB^\heartsuit_X$, $U^\circ$ is a non empty open subset of $X$. As the morphism $\HC_{T^*_X/X}$ of \eqref{HC-Omega} is an isomorphism over $\Chow_n(T^*_X/X)$, we have a unique lifting 
	$$b^{\circ}_{\rm{Hilb}}:U^\circ\to \Hilb_n(T^*_X/X)\times_X U^\circ$$ 
	laying over the restriction $b^\circ=b|U^\circ$. 
	
	Since the Hilbert-Chow morphism \eqref{HC-Omega} is proper, there exists an open subset $U \subset X$, larger than $U^\circ$, whose complement $X-U$ is a closed subscheme of codimension at least 2, such that $b':U^\circ\to \Hilb'_n(T^*_X/X)\times_X U^\circ$ extends to 
	$$b^U_{\rm{Hilb}}:U\to \Hilb_n(T^*_X/X)\times_X U.$$
	By pulling back from $\Hilb_n(T^*_X/X)$ the tautological family of subschemes of $T^*_X$, we get a finite flat morphism $U^+_b \to U$ of degree $n$, equipped with a closed embedding $\iota_U:U^+_b \to T^*_U$.  
	
	According to Serre's theorem on extending vector bundles over surfaces, there exists a unique the finite flat covering $X^{\rm CM}_b\to X$ of degree $n$ extending the finite flat covering $U^+_b$ of $U$. The closed embedding $\iota_U:U^+_b \to T^*_U$ extends to a morphism $\iota:X^{\rm CM}_b \to T^*_X$ which may not be a closed embedding.
	
	By construction $p^{\rm CM}_b:X^{\rm CM}_b \to X$ is a finite flat morphism of degree $n$, it follows from smoothness of $X$ that $X^{\rm CM}_b$ is a Cohen-Macaulay surface. Apply the generalized Cayley-Hamilton theorem to the vector bundle $p^{\rm CM}_{b*}\cO_{X^{\rm CM}_b}$, as $\cO_{T^*_X}$-module over $T^*_X$, it is supported by $X^\bullet_b$. It follows that the morphism $X^{\rm CM}_b\to T^*_X$ factors through a map $q_b^{\rm CM}:X^{\rm CM}_B\to X^\bullet_b\subset T_X^*$. Since $X^{\rm CM}_b$ is finite over $X$, it is also finite over $X^\bullet_b$. As $q_b^{\rm CM}:X^{\rm CM}_b\to X^\bullet_b$ is an isomorphism over the nonempty open subset $U^\circ$, it is a finite Cohen-Macaulayfication of $X^\bullet_b$.
\end{proof}

\begin{remark}\label{torsion free}
Instead of using the Hilbert scheme, we can construct $X^{\rm CM}_b$ over the height one points as follows. Let $U^\circ=b^{-1}(\Chow^\circ_n(\rT^*_X/X))$ and let $Z$ be the complement of $U^\circ$. Let $z$ be the generic point of an irreducible component of $Z$ of dimension one. The localization of $X$ at $z$ is $X_z=\Spec(\cO_{X,z})$ where $\cO_{X,z}$ is a discrete valuation ring. By restricting $p_{b*} \cO_{X^\bullet_b}$ to $\cO_{X,z}$ we get a module of finite type which may have torsion. By considering the quotient $\Spec(p_{b*} \cO_{X^\bullet_b}/(p_{b*} \cO_{X^\bullet_b}^{\rm tors}))$ we obtain a locally free $\cO_{X,z}$-module and thus a section $X_z\to \Hilb_n(\rT^*_X/X)\times_X X_z$ over $b|_{X_z}$. By uniqueness of such a section we have an isomorphism 
\begin{equation}\label{torsion-free-quotient}
	\Spec(p_{b*} \cO_{X^\bullet_b}/(p_{b*} \cO_{X^\bullet_b})^{\rm tors}) \is\Spec(p^{\rm CM}_{b*} \cO_{X^{\rm CM}_b})
\end{equation}
over the complement of a codimension two subscheme of $X$. 
\end{remark}

\begin{remark}
We don't know whether the construction of the 
Cohen-Macaulay spectral surface $X_b^{\rm CM}$ works well in families. 
The issue is that the construction makes use of the 
 equivalence of categories from Theorem \ref{Serre extension} which does not work well in families.
\end{remark}

\begin{theorem}\label{Hitchin fibers}
For every $b\in \cB^\heartsuit_X(k)$, the fiber $h_X^{-1}(b)$ is isomorphic to the stack of Cohen-Macaulay sheaves $F$ of generic rank one over the Cohen-Macaulay spectral surface $X^{\rm CM}_b$. It contains in particular the Picard stack $\cP_b$ of line bundles on $X^{\rm CM}_b$. The action of $\cP_b$ on itself by translation extends to an action of $\cP_b$ on $h_X^{-1}(b)$.

In particular, $h_X^{-1}(b)$ is non-empty.
\end{theorem}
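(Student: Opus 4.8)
The plan is to start from the already-proven isomorphism between $h_X^{-1}(b)$ and the stack of maximal Cohen-Macaulay sheaves of generic rank one on the spectral surface $X^\bullet_b$, and to transport this description across the finite Cohen-Macaulayfication $q_b^{\rm CM}:X^{\rm CM}_b\to X^\bullet_b$ of Proposition \ref{CM-fication}. First I would set $\cA^\bullet=p_{b*}\cO_{X^\bullet_b}$ and $\cA^{\rm CM}=p^{\rm CM}_{b*}\cO_{X^{\rm CM}_b}$; these are coherent $\cO_X$-algebras joined by the homomorphism $\cA^\bullet\to\cA^{\rm CM}$ induced by $q_b^{\rm CM}$. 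As $p^{\rm CM}_b$ is finite flat of degree $n$ over the regular surface $X$, the algebra $\cA^{\rm CM}$ is locally free of rank $n$, and by the recalled fact on Cohen-Macaulay modules over a finite algebra on a regular surface, pushforward along $p^{\rm CM}_b$ and $p_b$ identifies maximal Cohen-Macaulay sheaves of generic rank one on $X^{\rm CM}_b$, respectively $X^\bullet_b$, with $\cA^{\rm CM}$-modules, respectively $\cA^\bullet$-modules, that are locally free of rank $n$ on $X$ and of generic rank one. (For a sheaf of generic rank one the maximal Cohen-Macaulay condition is the pertinent one, since full support is automatic.) Because $b\in\cB^\heartsuit_X(k)$, the generic point of $X$ lands in the multiplicity-free locus, over which $q_b^{\rm CM}$ is an isomorphism, so the two generic-rank-one conditions agree; the theorem thus reduces to matching $\cA^\bullet$-modules and $\cA^{\rm CM}$-modules that are locally free of rank $n$ on $X$.

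The main obstacle, and the technical heart, is precisely this matching of module categories. Let $U\subset X$ be the open set of Proposition \ref{CM-fication}, with complement of codimension at least two, over which $X^{\rm CM}_b$ is the flat degree-$n$ cover; by Remark \ref{torsion free} the map $\cA^\bullet|_U\to\cA^{\rm CM}|_U$ is the projection onto the $\cO_X$-torsion-free quotient. Given an $\cA^\bullet$-module $M$ that is locally free over $\cO_X$, hence $\cO_X$-torsion-free, I would first observe that every $\cO_X$-torsion section $a$ of $\cA^\bullet$ annihilates $M$: if $fa=0$ for a nonzero $f\in\cO_X$, then $f\cdot(am)=0$ forces $am=0$. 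Thus $M|_U$ is a module over $\cA^{\rm CM}|_U$, and the action map $\cA^{\rm CM}\otimes_{\cO_X}M\to M$ is defined over $U$ between sheaves locally free on the smooth surface $X$; by Serre's extension theorem (Theorem \ref{Serre extension}) it extends uniquely across the codimension-two complement, so $M$ is canonically an $\cA^{\rm CM}$-module. The converse, restricting an $\cA^{\rm CM}$-module to an $\cA^\bullet$-module along $\cA^\bullet\to\cA^{\rm CM}$, is immediate, and full faithfulness in both directions follows from the affineness of $p_b$ and $p^{\rm CM}_b$. This produces the desired isomorphism of stacks; I expect the uniqueness and globalization of the $\cA^{\rm CM}$-action, the step where the restriction to surfaces enters essentially, to be the delicate point.

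The remaining claims are then formal. A line bundle $L$ on the Cohen-Macaulay surface $X^{\rm CM}_b$ is locally isomorphic to $\cO_{X^{\rm CM}_b}$, hence maximal Cohen-Macaulay of generic rank one, so the Picard stack $\cP_b$ embeds as a substack of $h_X^{-1}(b)$. Tensoring by a line bundle is a local isomorphism and therefore preserves maximal Cohen-Macaulayness, the generic rank, and the $\cA^{\rm CM}$-module structure, which yields the action of $\cP_b$ on $h_X^{-1}(b)$ extending its translation action on itself. Finally, taking $L=\cO_{X^{\rm CM}_b}$ produces the Higgs bundle $(E,\theta)$ with $E=p^{\rm CM}_{b*}\cO_{X^{\rm CM}_b}$ and spectral datum $b$, whence $h_X^{-1}(b)$ is non-empty.
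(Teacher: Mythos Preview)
Your proposal is correct and follows essentially the same route as the paper. The paper argues directly from a Higgs bundle $(E,\theta)$: it factors $\Sym(\cT_X)\to\End_{\cO_X}(E)$ through $p_{b*}\cO_{X^\bullet_b}$ via Cayley--Hamilton, observes that the torsion of $p_{b*}\cO_{X^\bullet_b}$ dies in the torsion-free sheaf $\End_{\cO_X}(E)$ so that over a codimension-two open $U$ the map factors through $p^{\rm CM}_{b*}\cO_{X^{\rm CM}_b}$, and then applies Serre's extension theorem to obtain a global algebra map $p^{\rm CM}_{b*}\cO_{X^{\rm CM}_b}\to\End_{\cO_X}(E)$. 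You instead take the preceding Proposition as your starting point and phrase the argument as an equivalence between $\cA^\bullet$-modules and $\cA^{\rm CM}$-modules locally free over $\cO_X$, but the mechanism---torsion annihilates a locally free module, identify $\cA^{\rm CM}$ with the torsion-free quotient over $U$, extend via Serre---is the same. One small remark: your phrase ``full faithfulness in both directions follows from the affineness of $p_b$ and $p^{\rm CM}_b$'' is not quite the right justification; what you actually need is again the uniqueness clause in Serre's theorem (two maps of locally free sheaves agreeing over $U$ agree globally), together with the fact that $\End_{\cO_X}(M)$ is torsion-free so that the original $\cA^\bullet$-action is recovered after restricting the extended $\cA^{\rm CM}$-action.
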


\begin{proof}
Let $(E,\theta)\in\cM_X$ be a Higgs bundle over $X$ lying over $b\in\cB_X^\heartsuit(k)$. The Higgs field $\theta:\cT_{X}\to \End_{\cO_X}(E)$ define a homomorphism $\Sym(\cT_X)\to \End_{\cO_X}(E)$ which factors through $p_{a*} \cO_{X_b^\bullet}$ by the generalized Cayley-Hamilton theorem, see Proposition \ref{Cayley} part 3. 

Let $U^\circ$ and $Z$ be as in Remark \ref{torsion free} and  let $z$ be the generic point of an irreducible component of $Z$ of dimension one. Over $X_z$ we have a homomorphism
$$p_{b*} \cO_{X_b^\bullet}\otimes_{\cO_X} \cO_{X_z} \to \End_{\cO_X}(E)\otimes_{\cO_X} \cO_{X_z}.$$
Since the target is clearly torsion free, this homomorphism factors through \eqref{torsion-free-quotient}. Thus over an open subset $U\subset X$ whose complement is of codimension two, the above morphism factors through a homomorphism of algebras
$$p^{\rm CM}_{b*} \cO_{X^{\rm CM}_b} \otimes_{\cO_X} \cO_U \to \End_{\cO_X}(E)\otimes_{\cO_X} \cO_{U}.$$
By applying Serre's theorem again, we have a canonical homomorphism
$p^{\rm CM}_{b*} \cO_{X^{\rm CM}_b} \to \End_{\cO_X}(E)$. It follows that
$E=\tilde p_{a*} F$ where $F$ is a Cohen-Macaulay $\cO_{X^{\rm CM}_b}$-module of generic rank one. 

Since $p^{\rm CM}_{b}:X^{\rm CM}_b\to X$ is finite flat, for every line bundle $L$ on $X^{\rm CM}_b$, $p^{\rm CM}_{b*} L$ is a vector bundle of rank $n$ carrying a Higgs field. Thus $h_X^{-1}(b)$ contains $\cP_b$. We have an action of $\cP_b$ on $h_X^{-1}(b)$ given by $(L,F) \mapsto L\otimes_{\cO_{X^{\rm CM}_b}} F$ where $L$ is a line bundle on $X^{\rm CM}_b$ and $F$ is a Cohen-Macaulay sheaf of generic rank one.
\end{proof}


\begin{remark}
Let $b\in\sB_X^\heartsuit(k)$ such that the Cohen-Macaulay surface $X^{\rm CM}_b$ is integral. 
Consider the functor associating to a $k$-scheme $Y$ the set of isomorphism classes of 
family of 
Cohen-Macaulay sheaves of generic rank one on $X^{\rm CM}_b$ parametrized by $Y$.
According to \cite[Corollary 6.7 and Theorem 7.9]{AK}, the fppf sheafification of this
functor is represented by a $k$-scheme $\Pic(X^{\rm CM}_b)^-$
locally of finite type. 
In addition, $\Pic(X^{\rm CM}_b)^-$ admits a compactification $\Pic(X^{\rm CM}_b)^{=}$ 
whose $k$-points are 
given by isomorphism classes of torsion free rank one sheaves on $X^{\rm CM}_b$. 

\end{remark}

\begin{definition}
We define $\mathscr B_X^{\Diamond}(k)$ to be the subset of $\mathscr B_X^\heartsuit(k)$
consisting of those points $b$ such that 
the corresponding Cohen-Macaulay spectral surface $X^{\on{CM}}_b$ is 
normal.
\end{definition}

\begin{lemma}
	For $b\in\mathscr B_X^{\Diamond}(k)$,  the neutral component $\cP^0_{b}$ of $\cP_b$ is a quotient of an abelian variety by $\Gm$ acting trivially.
\end{lemma}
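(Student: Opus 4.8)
The plan is to identify $\cP^0_b$ explicitly as the quotient stack $[\Pic^0(X^{\rm CM}_b)/\Gm]$ with $\Gm$ acting trivially, and then to show that $\Pic^0(X^{\rm CM}_b)$ is an abelian variety. Write $Y=X^{\rm CM}_b$; by Proposition \ref{CM-fication} it is finite over the proper variety $X$, hence proper over $k$, and by the defining property of $\mathscr B_X^{\Diamond}(k)$ it is normal. First I would record the general structure of the Picard stack. By Theorem \ref{Hitchin fibers}, $\cP_b$ is the stack of line bundles on $Y$; its associated sheaf is the Picard scheme $\Pic_{Y/k}$, and since $Y$ is proper, reduced and connected, every line bundle has automorphism group exactly $\Gm$ (the global units of $\cO_Y$, which equal $k^\times$ as $k=\kbar$). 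Hence $\cP_b\to\Pic_{Y/k}$ is a $\Gm$-gerbe, and passing to neutral components, $\cP^0_b\to\Pic^0(Y)$ is a $\Gm$-gerbe.

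The main step is to prove that $\Pic^0(Y)$ is an abelian variety, and this is where normality is essential. Since $k$ has characteristic zero, I would choose a resolution of singularities $\rho:\tilde Y\to Y$ (Hironaka), with $\tilde Y$ a smooth proper surface, so that $\Pic^0(\tilde Y)$ is an abelian variety. Pullback gives a homomorphism of group schemes $\rho^*:\Pic^0(Y)\to\Pic^0(\tilde Y)$. The key point is that normality of $Y$ forces $\rho_*\cO_{\tilde Y}=\cO_Y$, so the projection formula yields $\rho_*\rho^*L\cong L$ for every line bundle $L$; consequently, if $\rho^*L\cong\cO_{\tilde Y}$ then $L\cong\cO_Y$, i.e. $\rho^*$ is injective on $k$-points. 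As $k$ has characteristic zero its kernel is a reduced closed subgroup scheme, hence trivial, and a monomorphism of finite-type group schemes is a closed immersion. Thus $\Pic^0(Y)$ is realized as a connected smooth closed subgroup scheme of the abelian variety $\Pic^0(\tilde Y)$, hence is itself an abelian subvariety. I expect this to be the only real obstacle: the whole argument rests on $\rho_*\cO_{\tilde Y}=\cO_Y$, which fails precisely for non-normal $Y$ (where $\Pic^0$ would instead acquire a linear part, e.g. a torus or unipotent group), so the normality hypothesis built into $\mathscr B_X^{\Diamond}$ is exactly what is needed.

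Finally I would trivialize the gerbe. Since $k$ is algebraically closed and $Y$ is a non-empty variety it carries a $k$-point $y_0$; rigidifying line bundles along $y_0$ makes the Picard functor representable and produces a Poincar\'e line bundle on $Y\times\Pic^0(Y)$, which is precisely a section of $\cP^0_b\to\Pic^0(Y)$. This section trivializes the $\Gm$-gerbe, so that
$$\cP^0_b\;\cong\;\Pic^0(Y)\times\mathbb{B}\Gm\;=\;[\Pic^0(Y)/\Gm]$$
with $\Gm$ acting trivially. Combined with the second paragraph, this exhibits $\cP^0_b$ as the quotient of the abelian variety $\Pic^0(X^{\rm CM}_b)$ by the trivial action of $\Gm$, which is the assertion of the lemma.
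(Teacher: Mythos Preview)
Your argument is correct and reaches the same conclusion by a different route. The paper invokes Geisser's theorem characterizing the affine part of $\Pic^0$: the multiplicative part vanishes iff $\rH^1_{\rm et}(Y,\Z)=0$ (which holds because $\pi_1$ of a normal variety is profinite, being a quotient of the Galois group of its function field), and the unipotent part vanishes iff $Y$ is semi-normal (implied by normal). You instead embed $\Pic^0(Y)$ into $\Pic^0(\tilde Y)$ for a resolution $\rho:\tilde Y\to Y$, using only that normality gives $\rho_*\cO_{\tilde Y}=\cO_Y$ (hence $\rho^*$ is injective on line bundles) and that closed connected subgroups of abelian varieties are abelian. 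Your approach is more elementary and self-contained, avoiding the citation; the paper's approach is shorter and isolates more precisely which weaker hypotheses would already suffice (semi-normality alone kills the unipotent part, while the toric part is governed by $\pi_1$). Your explicit identification of $\cP_b^0$ with $[\Pic^0(X_b^{\rm CM})/\Gm]$ via the $\Gm$-gerbe description and rigidification along a $k$-point is also more detailed than the paper's one-line assertion $\cP_b^0=[P_b^0/\Gm]$.
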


\begin{proof}
	This is a consequence of a theorem of Geisser \cite[Theorem 1]{Geisser}. Geisser's theorem states that the multiplicative part of the neutral component $P^0$ of the Picard variety $P$ of an algebraic variety $Y$ is trivial if and only if $\rH^1_{\on{et}}(Y,\Z)$ is trivial whereas the unipotent part is trivial if and only if $Y$ is semi-normal. If $Y$ is normal, $\pi_1(Y)$ is a profinite group, being a quotient of the Galois group of the generic point, and therefore cannot afford a nontrivial continuous homomorphism to $\Z$. It follows that $\rH^1_{\on{et}}(Y,\Z)$ is trivial. On the other hand, a normal variety is certainly also semi-normal. Assume that $X_b^{CM}$ is normal, then the neutral component $P_b^0$ of the Picard variety $P_b$ of $X_b^{CM}$ is an abelian variety. We have $\cP_b^0=[P_b^0/\Gm]$.
\end{proof}

\begin{proposition}
For $b\in\mathscr B_X^{\Diamond}(k)$, 
the action of $\sP_b$ on the Hitchin fiber 
$h_X^{-1}(b)$ is free and $h_X^{-1}(b)$
 is a disjoint union of 
$\sP_b$-orbits.
\end{proposition}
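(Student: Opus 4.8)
The plan is to reduce the statement, via Theorem~\ref{Hitchin fibers}, to elementary facts about rank one reflexive sheaves on a normal surface and the translation action of its Picard group. Write $S:=X^{\rm CM}_b$. Since $b\in\mathscr B_X^{\Diamond}(k)$ the surface $S$ is normal; being finite over the proper variety $X$ it is proper, and it is Cohen--Macaulay by the construction in Proposition~\ref{CM-fication}. Each connected component of $S$ is therefore an integral normal proper Cohen--Macaulay surface, so I may assume $S$ integral. By Theorem~\ref{Hitchin fibers} the fibre $h_X^{-1}(b)$ is the stack of maximal Cohen--Macaulay sheaves $F$ of generic rank one on $S$, with $\sP_b$ acting by $(L,F)\mapsto L\otimes_{\cO_S}F$.

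The first step is to identify the objects. On the normal surface $S$ a coherent sheaf of generic rank one is maximal Cohen--Macaulay exactly when it satisfies Serre's condition $S_2$, which for a rank one torsion-free sheaf on a normal variety means that it is reflexive, i.e.\ a divisorial sheaf $\cO_S(D)$. Thus isomorphism classes of objects of $h_X^{-1}(b)$ are the elements of the divisor class group $\Cl(S)$, and the automorphism group of each such $F$ is the scalar group $\Gm$, since $\End_{\cO_S}(F)=\cO_S$ for a reflexive sheaf of rank one and $\rH^0(S,\cO_S)=k$. Because $L$ is invertible, $L\otimes_{\cO_S}F$ is again reflexive of rank one (no reflexive hull is needed), and on isomorphism classes the $\sP_b$-action is the translation of $\Cl(S)$ by the image of the natural map $\Pic(S)\to\Cl(S)$.

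For freeness I would fix a geometric point $F$ and show that the orbit map $\sP_b\to h_X^{-1}(b)$, $L\mapsto L\otimes_{\cO_S}F$, is a monomorphism of stacks, equivalently that the stabilizer of every geometric point is trivial. Injectivity on isomorphism classes is the assertion that $L\otimes_{\cO_S}F\cong F$ forces $L\cong\cO_S$: restricting to the smooth locus $S^{\rm sm}$, whose complement is finite hence of codimension two, makes $F$ invertible, so $L|_{S^{\rm sm}}\cong\cO_{S^{\rm sm}}$; as $S$ is normal and $L$ is a line bundle, the restriction $\Pic(S)\to\Pic(S^{\rm sm})$ is injective (equivalently $\Pic(S)\hookrightarrow\Cl(S)$), whence $L\cong\cO_S$. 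On morphisms the map is fully faithful: the scalar automorphisms $\Aut(L)=\Gm$ are carried identically to $\Aut(L\otimes_{\cO_S}F)=\Gm$, and the $\mathrm{Isom}$-sets between invertible sheaves are either empty or $\Gm$-torsors that tensoring with $\mathrm{id}_F$ identifies. Hence the orbit map is a monomorphism and the action is free.

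Finally, that $h_X^{-1}(b)$ is a disjoint union of $\sP_b$-orbits is the formal consequence of freeness: the orbits are the images of the monomorphisms $\sP_b\to h_X^{-1}(b)$, they are indexed by the cosets $\Cl(S)/\Pic(S)$, every $F$ lies in its own orbit, and by freeness two orbits are either equal or disjoint; I would check this first on $k$-points and then note it holds at the level of stacks since the orbit maps are monomorphisms. The genuinely substantive step is the first one: it is precisely normality of $S$ that makes maximal Cohen--Macaulay sheaves of generic rank one coincide with the reflexive (divisorial) sheaves and that yields the injectivity $\Pic(S)\hookrightarrow\Cl(S)$; without normality the Cohen--Macaulay sheaves need not be invertible in codimension one, so the tensor action could fail to be free and orbits could collapse. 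The remaining care is purely stacky bookkeeping---matching the $\Gm$ of scalar automorphisms on the two sides so that the orbit map is an honest monomorphism rather than merely injective on isomorphism classes.
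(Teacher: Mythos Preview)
Your freeness argument is correct and essentially identical to the paper's: both restrict to the smooth locus of $S=X^{\rm CM}_b$ (whose complement has codimension $\geq 2$ by normality), observe that $F$ becomes invertible there so $L|_{S^{\rm sm}}$ is trivial, and then use injectivity of $\Pic(S)\to\Pic(S^{\rm sm})$. Your identification of maximal Cohen--Macaulay sheaves of generic rank one with reflexive rank-one sheaves on the normal surface $S$ is also correct, and the paper invokes the same identification.

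The gap is in your treatment of the ``disjoint union of orbits'' clause. You argue this is a formal consequence of freeness: orbits partition the $k$-points, the orbit maps are monomorphisms, and ``it holds at the level of stacks since the orbit maps are monomorphisms.'' But orbits of any group action partition the $k$-points, and a monomorphism of stacks need not be an open or closed immersion, so a set-theoretic partition by images of monomorphisms does not yield a disjoint-union decomposition of the stack. The paper interprets the statement geometrically---each orbit is an \emph{open and closed} substack---and proves both halves. Openness: $\sP_b$ is open in the moduli of rank-one Cohen--Macaulay sheaves by Altman--Kleiman \cite{AK}, and for each $F$ the map $F'\mapsto (F'\otimes_{\cO_S}F)^{\vee\vee}$ is an automorphism of $h_X^{-1}(b)$ carrying $\sP_b$ onto the orbit through $F$. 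Closedness: this uses the preceding lemma that $\cP_b^0$ is an abelian variety modulo $\Gm$, hence proper. You use neither ingredient, so the geometric content of the second assertion is missing from your argument.
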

\begin{proof}
If a line bundle $L\in\sP_b$ has a 
stabilizer $F\in h_X^{-1}(b)$ then, as any such $F$, regarding as a sheaf on $X_b^{\on{CM}}$,
is locally free of rank one on the smooth locus $U_b$
of 
$X_b^{\on{CM}}$, the line bundle $L$ is trivial on $U_b$.
Since $X_b^{\on{CM}}$ is normal,
the compliment $X_b^{\on{CM}}\setminus U_b$ is zero dimensional, it implies $L$ is trivial hence the 
action of $\sP_b$ is free.
We claim that 
the 
$\sP_b$ orbits on $h_X^{-1}(b)$ are open and closed.
The closedness follows from the lemma above. 
To show that $\sP_b$-orbits are open, we observe that 
$h_X^{-1}(b)$ is isomorphic to the stack of 
reflexive sheaves of rank one on $X_b^{\on{CM}}$ and, 
for any $F\in h_X^{-1}(b)$, the assignment 
sending $F'\in h_X^{-1}(b)$ to the reflexive hull of 
$F'\otimes_{X_b^{\on{CM}}} F$ (that is, the double dual of 
$F'\otimes_{X_b^{\on{CM}}} F$) defines an 
automorphism of  
$h_X^{-1}(b)$ mapping $\sP_b$ isomorphically to the 
$\sP_b$-orbit through $F$.
Since $\sP_b$ is open in $h_X^{-1}(b)$ (see \cite{AK}), it implies 
that $\sP_b$-orbits are open in $h_X^{-1}(b)$. The proposition follows.

\end{proof}

We expect that $\mathscr B_X^{\Diamond}(k)$ is a non-empty open subset of $\mathscr B_X(k)$
for most algebraic surfaces. The non-emptiness of $\mathscr B_X^{\Diamond}(k)$
is closely related to 
questions on
zero locus of symmetric differentials, which seems 
very little is known in higher dimension.

\quash{
\begin{conjecture}\label{non-emptiness}
$\mathscr B_X^{\Diamond}(k)$ is a non-empty open subset of $\mathscr B_X(k)$.
\end{conjecture}

The conjecture above is closely related to 
question on
zero locus of symmetric differentials. 
For example,
for $G=\GL_2$, one can identify
$\mathscr B_X(k)$ with the subspace of 
$\mathscr A_X(k)=\rH^0(X,S^1\Omega_X^1)\oplus\rH^0(X,S^2\Omega_X^1)$ consisting of
pairs
$(a_1,a_2)$
such that $a_1^2-4a_2\in\rH^0(X,S^2\Omega^1_X)$ is locally of the form 
$u^2$ for some $u\in\rH^0(X,S^1\Omega_X^1)$, and 
the subspace $\mathscr B^\Diamond_X(k)\subset \mathscr B_X(k)$
consists of those
pairs $(a_1,a_2)$ with the property that 
the zero locus of the quadratic differential $a_1^2-4a_2$ is either zero dimensional or 
a multiplicity free divisor. In the case of fibered surfaces,
one can use the theory of  
Hitchin morphism for curves to show that  generic points in $\mathscr B_X(k)$ satisfy the property above, hence confirms Conjecture \ref{non-emptiness} in this case (see Section \ref{fibered surfaces}). For general surfaces, e.g., surfaces of general type, it appears that very little is known about 
zero locus of quadratic differentials.
}

\section{Surfaces fibered over a curve}\label{fibered surfaces}

In this section we investigate the spectral surfaces $X^\bullet_b$ and the Cohen-Macaulay spectral surface $X^{\rm CM}_b$ in the case when $X$ is a fibration over a 
curve $C$ and apply our findings to ruled and elliptic surfaces. 

Let $X$ be a proper smooth surface and let 
$C$ be a proper smooth curve. Assume there is a proper flat 
surjective map $\pi:X\to C$ such that the generic fiber is a proper smooth curve.
We denote by $X^0\subset X$ the largest open subset such that $\pi$ is smooth.
Consider the cotangent morphism $d\pi:T_C^*\times_CX\to T_X^*$.
It induces a map \[[d\pi]:\on{Chow}_n(T_C^*/C)\times_CX
\to\on{Chow}_n(T_X^*/X)\]
on the relative Chow varieties. 
For every section 
$b_C:C\to\on{Chow}_n(T_C^*/C)$, the composition 
$$b_X:X\stackrel{}\is C\times_CX\stackrel{a_C\times\on{id}_X}\to\on{Chow}_n(T_C^*/C)\times_CX\stackrel{[d\pi]}\to\on{Chow}_n(T_X^*/X)$$
is a section of $\on{Chow}_n(T_X^*/X)\to X$ and the assignment $b_C\to b_X$
defines a map
\beq\label{embedding}
\iota_\pi:\mathscr B_C\to\mathscr B_X.
\eeq
We claim that the map above is a closed embedding. To see this we observe that 
there is a commutative diagram 
\beq\label{maps between bases}
\xymatrix{\mathscr B_C\ar[r]^{\iota_\pi}\ar[d]^{}&\mathscr B_X\ar[d]^{}\\
\mathscr A_C\ar[r]^{j_\pi}&\mathscr A_X}
\eeq
where the vertical arrows are the natural embeddings, and the bottom arrow is the embedding 
\[j_\pi:\mathscr A_C=\bigoplus_{i=1}^n\rH^0(C,\Sym^i\Omega_C^{1})\hookrightarrow\mathscr A_{X}=\bigoplus_{i=1}^n\rH^0(X,\Sym^i\Omega^1_X)\]
induced by the injection of vector spaces 
$\rH^0(C,\Sym^i\Omega_C^1)= \rH^0(X,\pi^*\Sym^i\Omega_C^1)\to\rH^0(X,\Sym^i\Omega_X^1)$. 
The claim follows. Note that, since $\on{dim} C=1$, the left vertical arrow in (\ref{maps between bases}) is in fact an isomorphism. From now on we will view $\mathscr B_C$ as a subspace of $\mathscr B_X$. Since the cotangent map 
$d\pi:T_C^*\times_CX\to T_X^*$ is a closed imbedding over the 
open locus $X^0$, we have $$\mathscr B_C^\heartsuit=\mathscr B_C\cap\mathscr B_X^\heartsuit.$$ 

For any $b\in\mathscr B_C$, we denote by $C^\bullet_b\to C$ the corresponding spectral curve and we define $X^+_b=C^\bullet\times_C X$. 
The natural projection map $p^+_b:X^+_b\to X$ is finite flat of degree $n$. Since $X$ is smooth, it follows that $X^+_b$ is a Cohen-Macaulay surface.

\begin{lemma}\label{Y_a}
There exits a finite $X$-morphism
$q^+_b:X^+_b\to X^\bullet_b$ which is a generic isomorphism
if $b\in\mathscr B_C^\heartsuit$. If the fibration $\pi:X\to C$ has only reduced fibers, then 
for any $b\in\mathscr B_C^\heartsuit$, the map 
$q^+_b:X^+_b\to X^\bullet_b$ is isomorphic to the 
finite Cohen-Macaulayfication
$q^{\rm CM}_b: X^{\rm CM}_b\to X^\bullet_b$ in Proposition \ref{CM-fication} (which is well-defined since $b\in\mathscr B_X^\heartsuit$).
\end{lemma}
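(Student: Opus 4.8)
The plan is to produce the comparison morphism $q^+_b$ directly from the defining equations of the spectral cover, and then to identify $X^+_b$ with the Cohen--Macaulayfication by invoking the uniqueness clause of Proposition \ref{CM-fication} together with Serre's extension theorem (Theorem \ref{Serre extension}).

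First I would construct $q^+_b$. By construction $X^+_b=C^\bullet_b\times_C X$ sits inside $T^*_C\times_C X$, and composing with $d\pi$ gives an $X$-morphism $X^+_b\to T^*_X$. To see that it lands in $X^\bullet_b$, recall from the proof of Proposition \ref{Cayley} that the ideal of $\on{Cayley}_n(T^*_X/X)$ inside $\Chow_n(T^*_X/X)\times_X T^*_X$ is generated by the functions $f_v$, one for each fibrewise linear form $v$ on $T^*_X$; after pulling back along $b_X$, the ideal of $X^\bullet_b$ in $T^*_X$ is generated by the $f_v(b_X,-)$. Because $b_X$ factors through $[d\pi]$ and $d\pi$ is fibrewise linear, for each $v$ the restriction of $f_v(b_X,-)$ to $X^+_b$ equals the pullback of the analogous curve function $f_w(b_C,-)$, where $w=v\circ d\pi$ is the linear form on the fibres of $T^*_C$ induced by $d\pi$. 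This function vanishes on $C^\bullet_b$, since $C^\bullet_b$ is cut out inside $T^*_C$ by the characteristic polynomial (the $d=1$ case of Proposition \ref{Cayley}). Hence $X^+_b\to T^*_X$ factors through a morphism $q^+_b:X^+_b\to X^\bullet_b$, which is finite because $X^+_b$ and $X^\bullet_b$ are both finite over $X$ and $X^\bullet_b\to X$ is separated. Over the locus of $X$ lying in $X^0$ on which $b_X$ is multiplicity free, $d\pi$ is a closed embedding and both covers restrict to the same étale degree-$n$ cover, so $q^+_b$ is an isomorphism there; this locus is dense when $b\in\mathscr B_C^\heartsuit$, giving the generic isomorphism.

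For the second assertion I would use the uniqueness clause of Proposition \ref{CM-fication}: it suffices to exhibit $X^+_b$, together with $\iota=d\pi$, as a finite flat degree-$n$ cover of $X$ that is a closed embedding into $T^*_X$ over an open set whose complement has codimension $\ge 2$, and whose fibres over that open set are the $\Hilb_n(T^*_X/X)$-points lying over $b_X$. The reducedness of the fibres of $\pi$ enters exactly here: when every fibre of $\pi$ is reduced, a non-smooth locus $X\setminus X^0$ of dimension one is excluded (a non-reduced fibre would force $\pi$ to be non-smooth along a whole divisor), so $X^0$ has complement of codimension $\ge 2$. Over $X^0$ the map $d\pi$ is a closed embedding, and the fibre of $X^+_b$ over $x\in X^0$ is the fibre of $C^\bullet_b$ over $\pi(x)$, a length-$n$ subscheme of $T^*_{X,x}$ whose Hilbert--Chow cycle is $b_X(x)$; thus $X^+_b|_{X^0}$ is a section of $\Hilb_n(T^*_X/X)$ over $b_X|_{X^0}$, while $X^+_b$ is finite flat of degree $n$ and Cohen--Macaulay since $X$ is smooth. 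It then remains to match this section with the one defining $X^{\rm CM}_b$: setting $W=X^0\cap U$ with $U$ the open set from Proposition \ref{CM-fication}, the complement of $W$ still has codimension $\ge 2$, and over $W$ both $X^+_b$ and $X^{\rm CM}_b$ determine sections of the proper morphism $\Hilb_n(T^*_X/X)\times_{\Chow_n(T^*_X/X),\,b_X} W\to W$. These two sections agree on the dense open $U^\circ\cap W$ where $b_X$ is multiplicity free (there $\HC_{T^*_X/X}$ is an isomorphism), and since $W$ is reduced and this open is schematically dense, separatedness of the proper morphism forces the sections to coincide on all of $W$. Hence $X^+_b$ and $X^{\rm CM}_b$ are isomorphic over $W$, compatibly with their maps to $T^*_X$, and Theorem \ref{Serre extension} upgrades this to an isomorphism over all of $X$, identifying $q^+_b$ with $q^{\rm CM}_b$.

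I expect the main obstacle to be the bookkeeping in the last two steps, rather than the factorization in the first, which is formal once the generators $f_v$ are used. Specifically, one must check that the ``reduced fibres'' hypothesis is precisely what makes $X^0$ have codimension-two complement, and that the two a priori different extensions of the multiplicity-free Hilbert section---the one coming from the fibre product $C^\bullet_b\times_C X$ and the one from the Hilbert-scheme construction of $X^{\rm CM}_b$---agree across the codimension-one points sitting in the non-multiplicity-free locus, which are the generic points of the whole fibres $\pi^{-1}(c)$ over the finitely many branch points $c$ of $C^\bullet_b\to C$. The separatedness-plus-reducedness argument is what controls this matching and makes Serre's theorem applicable.
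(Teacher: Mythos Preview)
Your proof is correct and follows the same overall strategy as the paper. Two small differences worth noting: for the factorization $X^+_b\to X^\bullet_b$, the paper bypasses the explicit check on the generators $f_v$ and instead applies the generalized Cayley--Hamilton theorem (Proposition~\ref{Cayley}(3)) to the rank-$n$ locally free sheaf $p^+_{b*}\cO_{X^+_b}$, viewed via $d\pi$ as an $\cO_{T^*_X}$-module with spectral datum $b_X$; and for the identification $X^+_b\simeq X^{\rm CM}_b$, the paper simply invokes the uniqueness clause of Proposition~\ref{CM-fication} once the hypotheses (finite flat of degree $n$, closed embedding over $X^0$, $X\setminus X^0$ of codimension $\ge 2$) are verified, whereas you spell out the matching of Hilbert sections over $W=X^0\cap U$ and the appeal to Serre's theorem. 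Your more explicit route is fine and in fact clarifies why the uniqueness in Proposition~\ref{CM-fication} holds, but it is not required for the lemma.
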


\begin{proof}
Let $i^+_b:X^+_b\to T^*_X$ be the restriction of 
the cotangent morphism $d\pi:T_C^*\times_CX\to T^*_X$
to the closed sub-scheme $X^+_b\subset T_C^*\times_CX$. By the 
Cayley-Hamilton theorem the map $i^+_b$ factors through 
the spectral surface $X^\bullet_b$. Let $q^+_b:X^+_b\to X^\bullet_b$ be the 
resulting map. 
As $X^+_b$ is finite over $X$, the map $q^+_b$ is finite.
In addition, if $b\in\mathscr B_C^\heartsuit$,
then both $X^+_b$ and $X^\bullet_b$ are generically \'etale over 
$X$ of degree $n$ and it implies that $q^+_b$ is a generic isomorphism.
 
Assume the fibers of $\pi$ are reduced. 
Then the smooth locus $X^0$ of the map $\pi$ is open and its complement 
$X-X^0$ is a closed subset of codimension $2$.
Since the map $i^+_b:X^+_b\to T^*_X$
is a closed embedding over $X^0$, 
Proposition \ref{CM-fication} implies 
the finite flat covering    $q^+_b:X^+_b\to X^\bullet_b$ 
is isomorphic to the finite Cohen-Macaulayfication $q^{\rm CM}_b: X^{\rm CM}_b\to X^\bullet_b$. 
\end{proof}

\begin{definition}
We define $\mathscr B_C^{\Diamond}$ to be the open subset of $\mathscr B_C^\heartsuit$
consisting of those points $b$ such that 
the corresponding spectral curve $C_b$ is 
smooth and 
irreducible. 
\end{definition}

\begin{corollary}\label{normality}
Assume the fibration $\pi:X\to C$ has only reduced fibers. Then 
we have $\mathscr B_C^{\Diamond}\subset\mathscr B_X^{\Diamond}$, that is,
the surface $X^{\on{CM}}_b$ is normal for $b\in\mathscr B_C^\Diamond$.
\end{corollary}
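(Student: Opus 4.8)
The plan is to use Lemma \ref{Y_a} to replace the Cohen-Macaulay spectral surface by the fiber product $X^+_b = C^\bullet_b\times_C X$, and then to verify that this fiber product is normal by Serre's criterion. Since $b\in\mathscr B_C^\Diamond\subset\mathscr B_C^\heartsuit=\mathscr B_C\cap\mathscr B_X^\heartsuit$, the surface $X^{\on{CM}}_b$ is defined, and because $\pi$ has reduced fibers, Lemma \ref{Y_a} provides an isomorphism $X^{\on{CM}}_b\cong X^+_b$ over $X^\bullet_b$. Thus it suffices to prove that $X^+_b$ is normal. Recall that $X^+_b$ is already Cohen-Macaulay, being finite flat over the smooth surface $X$ via $p^+_b$, so it satisfies Serre's condition $(S_2)$; the whole content is therefore to check $(R_1)$, i.e. that the non-regular locus of the surface $X^+_b$ has codimension at least two, equivalently is a finite set of points.

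First I would analyze $X^+_b$ through its other projection $f:X^+_b=C^\bullet_b\times_C X\to C^\bullet_b$, which is the base change of $\pi$ along the finite map $C^\bullet_b\to C$. Working with $k$-points over $k=\bar k$, the fibers of $f$ are literally the fibers of $\pi$, so $f$ is flat with reduced fibers and smooth generic fiber, and $f$ is smooth at a point $y$ precisely when $\pi$ is smooth at the image $p^+_b(y)\in X$. Hence the non-smooth locus of $f$ equals $(p^+_b)^{-1}(X-X^0)$, where $X^0$ is the smooth locus of $\pi$.

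Next I would invoke that, since the fibers of $\pi$ are reduced and the generic fiber is smooth, the complement $X-X^0$ is a closed subset of $\codim$ two in $X$, i.e. a finite set of points; this is exactly the fact already used in the proof of Lemma \ref{Y_a}. As $p^+_b$ is finite, $(p^+_b)^{-1}(X-X^0)$ is again finite. Away from this finite set the morphism $f$ is smooth and its target $C^\bullet_b$ is a smooth curve, the hypothesis $b\in\mathscr B_C^\Diamond$ entering here to guarantee that the spectral curve $C^\bullet_b$ is smooth; so the composite $X^+_b\to C^\bullet_b\to\Spec k$ is smooth and $X^+_b$ is regular there. Therefore the non-regular locus of $X^+_b$ is contained in a finite set, which gives $(R_1)$.

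Combining $(R_1)$ with the Cohen-Macaulay property $(S_2)$, Serre's criterion shows that $X^+_b$, and hence $X^{\on{CM}}_b$, is normal; this is precisely the assertion $\mathscr B_C^\Diamond\subset\mathscr B_X^\Diamond$. If $X$ is moreover integral, the irreducibility of $C^\bullet_b$ ensures $X^+_b$ is integral as well, though only normality is needed here. The argument is essentially a bookkeeping of singular loci, so I do not expect a serious obstacle; the one point requiring care is the identification of the non-smooth locus of the base-changed fibration $f$ with $(p^+_b)^{-1}(X-X^0)$, which rests on the invariance of fibers under the base change $C^\bullet_b\to C$ together with the reducedness hypothesis that keeps $X-X^0$ of codimension two.
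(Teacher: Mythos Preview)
Your proof is correct and follows essentially the same approach as the paper: identify $X^{\rm CM}_b$ with $X^+_b=C^\bullet_b\times_C X$ via Lemma \ref{Y_a}, invoke Serre's criterion using that $X^+_b$ is Cohen-Macaulay, and check $(R_1)$ by observing that the open subset $C^\bullet_b\times_C X^0$ is smooth (as a smooth morphism over the smooth curve $C^\bullet_b$) with finite complement. The paper's argument is terser but logically identical; your additional remarks on identifying the non-smooth locus of the base-changed fibration and on integrality are accurate but not needed.
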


\begin{proof}
Since $X^{\on{CM}}_b$ is Cohen-Macaulay, by Serre's criterion for normality,
it suffices to show that the $X^{\on{CM}}_b\is X^+_b$ is smooth in codimension $\leq 1$. 
The assumption implies the complement $X-X^0$ has codimension at least 2.
Since $C_b$ is smooth for $b\in\mathscr B_C^\Diamond$,
 the open subset $X^{+0}_b:=\tC_a\times_C X^0\subset X^+_b$ is 
smooth (since the map $X^{+0}_b\to C_b$ and $C_b$ are smooth) and 
the complement $X^+_b-X^{+0}_b$ has codimension at least 2. The corollary follows.
\end{proof}

\begin{example}
Consider the case when $X=C\times\mathbb P^1$
and $n=2$. We have 
$\sB_X=\sB_C=\rH^0(C,\Omega_C^1)\oplus\rH^0(C,\Sym^2\Omega_C^1)$.
Let $b=(b_1,b_2)\in\sB^\heartsuit_C$ and $p_{b}:X^\bullet_b\to X$ be the corresponding
spectral surface. Then \'etale locally over $X$, 
the surface $X^\bullet_b$ is isomorphic to the closed subscheme of 
 $\on{Spec}(k[x_1,x_2,t_1,t_2])$ defined by the equations 
\begin{equation}\label{phe2}
\left\{\begin{split} t_1^2+b_1t_1+b_2=0\\ t_2(2t_1+b_1)=0
 \\\ t_2^2=0
\end{split}
\right.
\end{equation}
here $x_1,x_2$ are local coordinate of $C$ and $\mathbb P_1$
and $b_i\in k[x_1]$.
Let $\on{discr}_C=(b_1^2-4b_2=0)\subset C$ be the discriminant divisor for $b$.  
From (\ref{phe2}) we see that  $X^\bullet_b$ is an \'etale 
cover of degree $2$ away from the divisor  
$\on{discr}_C\times\mathbb P^1\subset X$. Note  
that the spectral surface $p_{b}:X^\bullet_b\to X$ is not flat over $X$ as the push-forward
$p_{b*}\mO_{X^\bullet_b}$ has length three over 
$\on{discr}_C\times\mathbb P^1$. 
The finite Cohen-Macaulayfication $X^{\on{CM}}_b\to X^\bullet_b$ is 
given by the flat quotient 
$\on{Spec}(p_{b*}\mO_{X^\bullet_b}/(p_{b*}\mO_{X^\bullet_b})^{\on{tors}})$
which is isomorphic to $X^{\on{CM}}_b\is C_b\times\mathbb P^1$. 
The Hitchin fiber $h_X^{-1}(b)$ is isomorphic to 
$$h_X^{-1}(b)=h_C^{-1}(b)\times\sP ic(\mathbb P^1)= h_C^{-1}(b) \times B\Gm.$$
\end{example}

\begin{prop} \label{1-form}
	Let $X$ be a smooth projective surface and $\pi:X\to C$ be either a ruled surface, or a non-isotrivial elliptic surface with reduced fibers. Then for every $n$, the pull-back map $$\rH^0(C,\Sym^n \Omega^1_C) \to \rH^0(X,\Sym^n \Omega^1_X)$$ 
	is an isomorphism. 
	\end{prop}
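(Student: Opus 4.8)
The plan is to study $\Sym^n\Omega^1_X$ through the filtration induced by the cotangent sequence of $\pi$. Since the generic fibre is smooth, $\pi$ is smooth on a dense open set, so the relative cotangent sequence $0\to\pi^*\Omega^1_C\to\Omega^1_X\to\Omega^1_{X/C}\to0$ has injective left map (the invertible sheaf $\pi^*\Omega^1_C$ is torsion-free and the map is generically injective). Writing $S:=\Sym^n\Omega^1_X$, $L:=\pi^*\Omega^1_C$ and $Q:=\Omega^1_{X/C}$, the sheaf $S$ carries a decreasing filtration $S=F^0\supseteq F^1\supseteq\cdots\supseteq F^n\supseteq F^{n+1}=0$ with bottom piece $F^n=\Sym^n(\pi^*\Omega^1_C)=\pi^*\Sym^n\Omega^1_C$ and graded pieces $\gr^p:=F^p/F^{p+1}\cong\pi^*(\Omega^1_C)^{\otimes p}\otimes\Sym^{n-p}Q$. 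Under the canonical isomorphism $\Sym^n\pi^*\Omega^1_C\cong\pi^*\Sym^n\Omega^1_C$, the pull-back map of the statement is exactly $\rH^0(X,F^n)\hookrightarrow\rH^0(X,S)$; and since $\pi_*\mathcal{O}_X=\mathcal{O}_C$, projection formula gives $\rH^0(X,F^n)=\rH^0(C,\Sym^n\Omega^1_C)$, with injectivity automatic. Thus the whole proposition reduces to the surjectivity statement: every global section of $S$ lies in $F^n$.

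In the ruled case this is immediate. Here $X^0=X$, each $\gr^p$ is a line bundle, and on a fibre $E_c\cong\mathbb{P}^1$ one has $\Sym^{n-p}Q|_{E_c}\cong\mathcal{O}_{\mathbb{P}^1}(-2(n-p))$, so $\pi_*\gr^p=(\Omega^1_C)^{\otimes p}\otimes\pi_*(Q^{\otimes(n-p)})=0$ for $p<n$, hence $\rH^0(X,\gr^p)=0$ and the long exact sequences of the filtration yield $\rH^0(X,F^n)=\rH^0(X,S)$. In the elliptic case $\rH^0(X,\gr^p)$ need not vanish, so I would argue more carefully. Because the fibres are reduced, $X\setminus X^0$ is a finite set of points, of codimension two; as $S$ is locally free and $X$ smooth, Hartogs gives $\rH^0(X,S)=\rH^0(X^0,S)$, and $F^n$ is saturated in $S$ (its restriction to $X^0$ is a subbundle, and a rank-one reflexive subsheaf of a locally free sheaf on a smooth surface is a line bundle, so $F^n$ equals its saturation and $S/F^n$ is torsion-free). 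On $X^0$ all $\gr^p$ are line bundles, and I proceed by induction on $p$: assuming $s|_{X^0}\in F^p$, let $t_p\in\rH^0(X^0,\gr^p)$ be its image. The line bundle $\gr^p|_{X^0}$ extends to a line bundle $M_p$ on $X$, and $t_p$ extends (again by Hartogs) to $\bar t_p\in\rH^0(X,M_p)$. Granting the fibrewise claim below, $\bar t_p$ vanishes on $E_c$ for all $c$ in a dense open $U\subseteq C$, hence on the dense subset $\pi^{-1}(U)\subseteq X$, so $\bar t_p=0$ on the integral surface $X$ and $t_p=0$; thus $s|_{X^0}\in F^{p+1}$. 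After $n$ steps $s|_{X^0}\in F^n|_{X^0}$, and since $S/F^n$ is torsion-free this forces $s\in\rH^0(X,F^n)$.

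It remains to prove the fibrewise claim, and this is where I expect the main obstacle: \emph{for general $c\in C$, the image of $s$ in $\gr^p|_{E_c}$ vanishes for every $p<n$.} Fix a general $c$, so $E_c$ is a smooth elliptic fibre; restricting the cotangent sequence gives $0\to\mathcal{O}_{E_c}\to\Omega^1_X|_{E_c}\to\Omega^1_{E_c}\to0$, and $\Omega^1_{E_c}\cong\mathcal{O}_{E_c}$. The extension class in $\Ext^1_{E_c}(\mathcal{O}_{E_c},\mathcal{O}_{E_c})=\rH^1(E_c,\mathcal{O}_{E_c})$ is the Kodaira--Spencer class of the family at $c$; non-isotriviality means it is nonzero for general $c$, so $\Omega^1_X|_{E_c}$ is the unique non-split self-extension of $\mathcal{O}_{E_c}$, i.e.\ the rank-two Atiyah bundle $F_2$. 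By Atiyah's classification of vector bundles on an elliptic curve in characteristic zero, $\Sym^nF_2\cong F_{n+1}$, the indecomposable bundle of rank $n+1$ with trivial determinant and one-dimensional space of sections, whose unique section generates the socle $\mathcal{O}_{E_c}=F^n|_{E_c}$ at the bottom of the filtration. Hence $s|_{E_c}$ is a scalar multiple of this section and maps to $0$ in every $\gr^p|_{E_c}$ with $p<n$, which is the claim. The two points requiring genuine care are the identification of the extension class with Kodaira--Spencer and the socle computation $\Sym^nF_2\cong F_{n+1}$; once these are established, the Hartogs extension and density-of-fibres arguments are routine, and the ruled case is handled directly as above.
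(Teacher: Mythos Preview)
Your argument is correct and follows essentially the same strategy as the paper: both reduce to the Atiyah bundle on an elliptic fibre and the fact that $\Sym^n$ of it has a one-dimensional space of global sections (equivalently, of maps to $\mathcal{O}$), together with Hartogs across the codimension-two non-smooth locus. The only presentational differences are that the paper works dually with the tangent sequence (viewing a symmetric form as a map $\Sym^n\mathcal{T}_X\to\mathcal{O}_X$) and restricts to the \emph{generic} fibre $X_\eta$ rather than a general closed fibre, then proves the needed one-dimensionality of $\Hom(\Sym^n\mathcal{E},\mathcal{O}_{X_\eta})$ and the vanishing of the restriction to $\Sym^{n-1}\mathcal{E}$ by an inductive Ext computation, whereas you quote $\Sym^nF_2\cong F_{n+1}$ directly; these are equivalent.
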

	
It follows from the proposition above that in the case of ruled surfaces and non-isotrivial elliptic surface with reduced fibers, we have $\cA_C=\cA_X$. Since $\cB_C=\cA_C$, we have $\cB_X=\cB_C$
and $\cB^\Diamond_X$ and $\cB^\heartsuit_X$ are open dense in $\cB_X$.
 For every $b\in \cB_C$, we have a spectral curve $C^\bullet_b$ which is finite flat of degree $n$ over $C$. We also have the spectral surface $X^\bullet_b$ which is a finite scheme over $X$ embedded in its cotangent bundle $T_X^*$. The Cohen-Macaufication of $X^\bullet_b$ is $X^+_b=C_b\times_C X$. In the case of elliptic surfaces, the morphism $X^{\rm CM}_b\to X^\bullet_b$ may not be an isomorphism, and $X^{\rm CM}_b$ may not be embedded in the cotangent bundle $T^*_X$. The existence of the Cohen-Macaulay spectral cover guarantees that $h_X^{-1}(b)$ is non-empty. 

The Proposition \ref{1-form} is obvious for ruled surfaces. Let us investigate it in the case of elliptic surfaces. We assume there is a proper flat map 
$\pi:X\to C$ from $X$ to a smooth projective curve $C$ with general fiber 
a smooth curve of genus one. We will focus on the case when $\pi:X\to C$  
is not isotrivial, relatively minimal, and has reduced fibers (e.g., semi-stable non-isotrivial elliptic surfaces). Let $X^0$ denote the largest open subset of $X$ such that the restriction of $\pi$ to $X^0$ is a smooth morphism $\pi^0 :X^0\to C$. Since the geometric fibers of $\pi$ are all reduced, the complement of $X^0$ in $X$ is a zero-dimensional subscheme. 
Over $X^0$, we have an exact sequence of tangent bundles 
\begin{equation} \label{ses-tangent}
	0\to\cT_{X^0/C}\to\cT_{X^0}\to(\pi^0)^*\cT_{C}\to 0
\end{equation}
For every $n\in\N$, we have the exact sequence of symmetric powers 
\begin{equation}\label{ses-sym}
	0 \to \Sym^{n-1} \cT_{X^0} \otimes \cT_{X^0/C} 
	\to \Sym^n \cT_{X^0} \to (\pi^0)^*\Sym^n  \cT_{C}\to 0.
\end{equation}

Let $\eta\in C$ be the generic point of $C$ and let $X_\eta=X\times_C\eta$ 
which is
an elliptic curve over $\eta$. The restriction of \eqref{ses-tangent} to $X_\eta$ is a short exact sequence making the rank two vector bundle $\cT_X|_{X_\eta}$ a self-extension of the trivial line bundle of $X_\eta$. As we assume the elliptic fibration $\pi$ is non isotrivial, i.e., the Kodaira-Spencer map is not zero, $\cT_X|_{X_\eta}$ is a non-trivial self-extension of the trivial line bundle on $X_\eta$. After Atiyah \cite{A}, such a  non-trivial extension is unique up to isomorphism
\begin{equation} \label{Atiyah}
	0 \to \cO_{X_\eta} \to \cE \to \cO_{X_\eta} \to 0.
\end{equation}
In other words, the restriction of \eqref{ses-tangent} to the generic fiber $X_\eta$ is isomorphic to \eqref{Atiyah}. 

\begin{lemma}
	The exact sequence of symmetric powers derived from \eqref{Atiyah}
\begin{equation} \label{Sym-Atiyah}
	0 \to \Sym^{n-1} \cE \to \Sym^n \cE \to \cO_{X_\eta} \to 0
\end{equation}
is not split.
\end{lemma}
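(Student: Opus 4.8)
The plan is to deduce non-splitness from the indecomposability of $\Sym^n\cE$, or equivalently from the computation $\rH^0(X_\eta,\Sym^n\cE)=1$. Indeed, a splitting of \eqref{Sym-Atiyah} would exhibit $\cO_{X_\eta}$ as a direct summand of $\Sym^n\cE$; since the sub-bundle $\cO_{X_\eta}\hookrightarrow\cE$ of \eqref{Atiyah} induces $\cO_{X_\eta}=\Sym^{n-1}\cO_{X_\eta}\hookrightarrow\Sym^{n-1}\cE$ and hence $\rH^0(\Sym^{n-1}\cE)\geq1$, such a splitting would force $\rH^0(\Sym^n\cE)=\rH^0(\Sym^{n-1}\cE)+1\geq2$. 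So it suffices to prove $\rH^0(X_\eta,\Sym^n\cE)=1$ (equivalently, that $\Sym^n\cE$ is indecomposable), which is what I would aim at.

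First I would reduce to an algebraically closed base field. The fibre $X_\eta$ is a genus-one curve over $\eta=\Spec k(C)$, which need not be algebraically closed, so to apply the classification of bundles on an elliptic curve I base-change to the algebraic closure $\bar\eta$. Then $Y:=X_{\bar\eta}$ is an elliptic curve over an algebraically closed field of characteristic zero; by flat base change $\rH^1(X_\eta,\cO_{X_\eta})\otimes_{k(C)}\bar\eta=\rH^1(Y,\cO_Y)$, so the Atiyah class of \eqref{Atiyah} stays nonzero and $\cE_{\bar\eta}$ remains the unique non-split self-extension of $\cO_Y$, i.e. Atiyah's indecomposable rank-two bundle $F_2$. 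As a splitting of \eqref{Sym-Atiyah} over $\eta$ would survive this base change, it is enough to prove non-splitness over $\bar\eta$; henceforth I assume $\cE=F_2$ on the elliptic curve $Y$.

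Over $Y$ I would invoke Atiyah's classification \cite{A}: in characteristic zero the symmetric powers of $F_2$ are again indecomposable Atiyah bundles, $\Sym^n F_2\cong F_{n+1}$, with $\rH^0(Y,F_{n+1})=1$. Either statement closes the argument through the reduction of the first paragraph. For a self-contained derivation of $\rH^0(\Sym^n\cE)=1$ one argues by induction on $n$ using two exact sequences: the rank-two Koszul sequence $0\to\Sym^{n-2}\cE\to\cE\otimes\Sym^{n-1}\cE\to\Sym^n\cE\to0$ (which uses $\wedge^2\cE=\det\cE\cong\cO_Y$), and the sequence $0\to\Sym^{n-1}\cE\to\cE\otimes\Sym^{n-1}\cE\to\Sym^{n-1}\cE\to0$ obtained by tensoring \eqref{Atiyah} with $\Sym^{n-1}\cE$, whose connecting map on $\rH^0$ is cup product with the nonzero Atiyah class $e\in\rH^1(\cO_Y)$. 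Serre duality together with the self-duality $\cE^\vee\cong\cE$ (again from $\det\cE\cong\cO_Y$) gives $\rH^0(\Sym^m\cE)=\rH^1(\Sym^m\cE)$, which feeds the bookkeeping.

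The hard part is precisely this cohomological bookkeeping: the two sequences by themselves only produce relations among the dimensions $\rH^0(\Sym^m\cE)$ and the ranks of the maps $\cup e$, and do not formally force $\rH^0=1$ — for instance $\cup e$ is an isomorphism on $\cO_Y$ but vanishes identically on $\cE$, so one cannot run a naive induction with a uniform rank hypothesis. The genuine input is the nonvanishing of $e$ and of its iterated cup products across degrees, which is the substance of Atiyah's theorem; accordingly I would lean on the cited classification for this step rather than re-deriving it. The only other point needing care is the descent from $\bar\eta$ to $\eta$, already arranged above.
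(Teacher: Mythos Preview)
Your argument is correct, but it takes a substantially different route from the paper's. The paper gives a direct, elementary filtration argument: for any extension $0\to\cL'\to\cE\to\cL\to 0$ of line bundles, the canonical filtration $0=\cF_0\subset\cdots\subset\cF_n=\Sym^n\cE$ has the property that the top two-step quotient $0\to\cF_{n-1}/\cF_{n-2}\to\cF_n/\cF_{n-2}\to\cF_n/\cF_{n-1}\to 0$ is exactly the original extension tensored with $\cL^{\otimes(n-1)}$. Hence if the original extension is non-split, so is this quotient sequence, and therefore so is $0\to\cF_{n-1}\to\cF_n\to\cL^{\otimes n}\to 0$. Applied with $\cL=\cL'=\cO_{X_\eta}$ this is precisely \eqref{Sym-Atiyah}.

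Compared with your approach, the paper's argument is strictly more elementary: it requires no cohomology computations, no base change to the algebraic closure, no appeal to Atiyah's classification $\Sym^n F_2\cong F_{n+1}$, and it works over an arbitrary base. What you gain from your route is the stronger output $\dim\rH^0(\Sym^n\cE)=1$ (equivalently, indecomposability of $\Sym^n\cE$), which is more than is needed here; but you pay for it by importing Atiyah's theorem as a black box, which you yourself flag as the substantive step. The paper instead defers the cohomological statements to the \emph{next} lemma, where they are obtained by induction using the non-splitness just proved.
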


\begin{proof}
	 Indeed if 
\begin{equation} \label{ses}
	0\to\cL' \to\cE \to\cL \to 0
\end{equation}
is an extension of of a line bundle $\cL$ by a line bundle $\cL'$, then there is a canonical filtration 
$$0=\cF_0 \subset \cF_1 \subset \cdots \subset \cF_{n-1} \subset \cF_n =\Sym^n\cE$$ 
of $\Sym^n\cE$ such that for every $i\in\{1,\ldots,n\}$ we have 
$\cF_{i}\is\Sym^i\cE\otimes\cL'^{\otimes n-i}$ and
$\cF_{i}/\cF_{i-1} \is\cL^{\otimes i} \otimes {\cL'}^{\otimes n-i}$. Moreover, the exact sequence 
\begin{equation} \label{ses-shift}
	0 \to \cF_{n-1}/\cF_{n-2} \to \cF_n/\cF_{n-2}\to \cF_{n}/\cF_{n-1} \to 0
\end{equation}
is isomorphic to the sequence \eqref{ses} tensored by $\cL^{\otimes(n-1)}$. In particular, if \eqref{ses} is not split, then \eqref{ses-shift} is not split either, and as a consequence, the exact sequence
$$0\to \cF_{n-1} \to \cF_n \to \cL^{\otimes n} \to 0$$
is not split.
Applying above discussion to~\eqref{Atiyah}, we see that~\eqref{Sym-Atiyah} is not split.

\end{proof}

\begin{lemma} For every $n\in\N$, we have 
	\begin{enumerate}
		\item $\dim_K \Ext^1(\cO_{X_\eta}, \Sym^n \cE)=1$
		\item $\dim_K(\Hom(\Sym^n \cE,\cO_{X_\eta}))=1$
		\item The restriction map $\Hom(\Sym^n \cE,\cO_{X_\eta})\to \Hom (\Sym^{n-1} \cE,\cO_{X_\eta})$ is zero.  
	\end{enumerate}
\end{lemma}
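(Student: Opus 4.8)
The plan is to reduce all three assertions to a single cohomological computation, namely $\dim_K\rH^0(X_\eta,\Sym^n\cE)=1$, together with two standard facts about the genus-one curve $X_\eta$. First I would record the reductions. Since $\cE$ is an extension of $\cO_{X_\eta}$ by $\cO_{X_\eta}$ we have $\det\cE\is\cO_{X_\eta}$, and for a rank-two bundle $\cE^\vee\is\cE\otimes(\det\cE)^{-1}\is\cE$; as we work in characteristic zero this yields $(\Sym^n\cE)^\vee\is\Sym^n(\cE^\vee)\is\Sym^n\cE$ for every $n$. Moreover $X_\eta$ is a smooth proper geometrically connected curve of genus one over $K$, so $\rH^0(X_\eta,\cO_{X_\eta})=K$ and $K_{X_\eta}\is\cO_{X_\eta}$, whence Serre duality reads $\rH^1(X_\eta,\cF)^*\is\rH^0(X_\eta,\cF^\vee)$.

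The crux is the claim $\dim_K\rH^0(X_\eta,\Sym^n\cE)=1$, which I would prove by induction on $n$, the case $n=0$ being trivial. For $n\geq 1$ I take the long exact cohomology sequence of the non-split sequence \eqref{Sym-Atiyah}. Its boundary map $\rH^0(X_\eta,\cO_{X_\eta})\to\rH^1(X_\eta,\Sym^{n-1}\cE)$ carries the generator to the extension class of \eqref{Sym-Atiyah}, which is nonzero by the preceding lemma; hence this boundary map is injective and the map $\rH^0(\Sym^n\cE)\to\rH^0(\cO_{X_\eta})$ vanishes. Therefore the inclusion induces an isomorphism $\rH^0(\Sym^{n-1}\cE)\isom\rH^0(\Sym^n\cE)$, and the rank stays equal to $1$.

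Assertions (1) and (2) are then formal. For (1), $\Ext^1(\cO_{X_\eta},\Sym^n\cE)=\rH^1(X_\eta,\Sym^n\cE)$, which by Serre duality and self-duality is dual to $\rH^0(X_\eta,(\Sym^n\cE)^\vee)=\rH^0(X_\eta,\Sym^n\cE)$, one-dimensional by the previous paragraph. For (2), $\Hom(\Sym^n\cE,\cO_{X_\eta})=\rH^0(X_\eta,(\Sym^n\cE)^\vee)=\rH^0(X_\eta,\Sym^n\cE)$ is likewise one-dimensional.

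Finally (3) follows by applying $\Hom(-,\cO_{X_\eta})$ to \eqref{Sym-Atiyah} and reading the long exact sequence
\begin{equation*}
0\to\Hom(\cO_{X_\eta},\cO_{X_\eta})\to\Hom(\Sym^n\cE,\cO_{X_\eta})\xrightarrow{\ \res\ }\Hom(\Sym^{n-1}\cE,\cO_{X_\eta})\to\cdots
\end{equation*}
The first map is injective, and by (2) it is a map between one-dimensional $K$-vector spaces, hence an isomorphism; by exactness $\res=0$, which is exactly (3). I do not anticipate a serious obstacle here: the entire substance is carried by the non-splitness already established in the previous lemma, and the only point demanding care is that the base field $K=k(C)$ is not algebraically closed, so I must invoke the genus-one facts $\rH^0(\cO_{X_\eta})=K$, $K_{X_\eta}\is\cO_{X_\eta}$ and the validity of Serre duality over an arbitrary field, all of which hold as noted in the first paragraph.
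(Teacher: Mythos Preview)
Your proof is correct and follows the same strategy as the paper, which simply says the lemma follows by induction on $n$ using the Ext long exact sequences derived from \eqref{Sym-Atiyah}. Your use of the self-duality $(\Sym^n\cE)^\vee\is\Sym^n\cE$ together with Serre duality to reduce all three parts to the single computation $\dim_K\rH^0(X_\eta,\Sym^n\cE)=1$ is a clean way to organize an argument the paper leaves implicit.
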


\begin{proof}
	It follows from induction on $n$ using the Ext long exact sequences derived from \eqref{Sym-Atiyah}.
\end{proof}

It follows from the above lemmas that, for every $n\in\N$,	
$\Sym^n \cE$ is the unique extension of $\cO_{X_\eta}$ by $\Sym^{n-1}\cE$, up to isomorphism.

Now we prove that pulling back 1-forms defines an isomorphism 
$$\rH^0(C,\Sym^n \Omega^1_{C}) \is \rH^0(X,\Sym^n \Omega^1_{X}).$$
This map is obviously injective, let us prove that it is also surjective.
A symmetric form $\alpha \in \rH^0(X,\Sym^n \Omega^1_{X})$ gives rise to a linear form $\alpha: \Sym^n \cT_X\to \cO_X$. By restriction to the generic fiber $X_\eta$ of the elliptic fibration, we obtain a map $\alpha_\eta:\Sym^n \cE\to \cO_{X_\eta}$. By previous lemma, the restriction of $\alpha_\eta$ to $\Sym^{n-1}\cE$ is zero. It follows that in the exact sequence \eqref{ses-sym}, the restriction of $\alpha$ to $\Sym^{n-1} \cT_{X^0} \otimes \cT_{X^0/C}$ is zero, i.e., it factors through $(\pi^0)^*\Sym^n\cT_{C}$. Since the complement of $X^0$ in $X$ is zero dimensional, $\alpha$ factors through $(\pi)^*\Sym^n\cT_{C}$, i.e., it comes from a symmetric form on $C$. 
This finishes the proof of Proposition \ref{1-form}.

These calculations show that the Hitchin morphism for ruled and elliptic surfaces are closely related to the Hitchin morphism for the base curve. This is compatible with the fact that under the Simpson correspondence
\cite{Simpson 1}, stable Higgs bundles for a smooth projective surface $X$ correspond to irreducible representations of the fundamental group $\pi_1(X)$, and in the case of ruled surfaces and non-isotrivial elliptic surfaces with reduced fibers, we have $\pi_1(X)\is\pi_1(C)$ where $C$ is the base curve (see, e.g., \cite[Section 7]{Friedman}).

\newpage
\section*{Acknowledgement}

\foreignlanguage{vietnamese}{Ngô Bảo Châu}'s research is partially supported by NSF grant DMS-1702380 and the Simons foundation. He is grateful \foreignlanguage{vietnamese}{Phùng Hồ Hải} for stimulating discussions in an earlier stage of this project. He also thanks Gérard Laumon for many conversations on the Hitchin fibrations over the years and his encouragement. The research of Tsao-Hsien Chen is partially supported by NSF grant DMS-1702337.
He thanks Victor Ginzburg and Tomas Nevins for useful discussions.
We thank Vladimir Drinfeld for useful comments on an earlier draft of this paper. 
We thank the anonymous referees for their valuable comments.

\end{document}